\newcommand{\insieme}[1]{\left\{ #1 \right\}}
\definecolor{pingreen}{rgb}{0,39,14}
\crefname{section}{§}{§§}
\Crefname{section}{§}{§§}
\def\vol{\mathrm{vol}}
\def\Int{\mathrm{Int}}
\def\Acal{\mathcal{A}}
\def\Hcal{\mathcal{H}}
\def\lhs{l.h.s.\xspace}
\newcommand{\hausd}{\mathcal H} 
\newtheorem*{rep@theorem}{\rep@title}
\newcommand{\newreptheorem}[2]{%
\newenvironment{rep#1}[1]{%
 \def\rep@title{#2 \ref{##1}}%
 \begin{rep@theorem}}%
 {\end{rep@theorem}}}
\newtheorem{theorem}{Theorem}[section]
\newtheorem{lemma}[theorem]{Lemma}
\newtheorem{definition}[theorem]{Definition}
\newtheorem{remark}[theorem]{Remark}
\newtheorem{proposition}[theorem]{Proposition}
\newcommand{\R}{\mathbb{R}}
\newcommand{\Z}{\mathbb{Z}}
\newcommand{\Fcal}{\mathcal{F}}
\def\SS{\mathcal S}
\def\N{\mathbb N}
\def\eps{\varepsilon}
\def\per{\mathrm{Per}}
\def\eps{\varepsilon}
\def\d {\,\mathrm {d}}
\def\dx{\,\mathrm {d}x}
\def\dz{\,\mathrm {d}z}
\def\ds{\,\mathrm {d}s}
\def\du{\,\mathrm {d}u}
\def\dv{\,\mathrm {d}v}
\def\dt{\,\mathrm {d}t}
\def\dy{\,\mathrm {d}y}
\numberwithin{equation}{section}
\author[1]{Sara Daneri\thanks{sara.daneri@gssi.it}}
\author[2]{Eris Runa\thanks{eris.runa@gmail.com}}
\affil[1]{Gran Sasso Science Institute, L'Aquila, Italy}
\affil[2]{Deutsche Bank AG, Berlin, Germany}
  \title{ Pattern formation for a local/nonlocal interaction functional arising in colloidal systems}
  \date{}
\begin{document}

  	\maketitle

  	\begin{abstract}
  		In this paper we study pattern formation for a physical local/nonlocal interaction functional where the local attractive  term is given by the $1$-perimeter and the nonlocal repulsive term is the Yukawa (or screened Coulomb) potential. 
  		This model is physically interesting as it is the $\Gamma$-limit of a double Yukawa model used to explain and simulate pattern formation in colloidal systems \cite{BBCH,CCA,IR,GCLW}. 
  		Following a strategy introduced in~\cite{DR} we prove that in a suitable regime minimizers are periodic stripes, in any space dimension. 
  	\end{abstract}
  \textbf{Keywords:} Pattern formation, Yukawa potential, symmetry breaking.
  
  \textbf{2010 MSC:} 49N20, 49S05, 82B21.

  	\section{Introduction}
  	
   The ability of matter to arrange itself in periodic structures is often referred to as spontaneous pattern formation. 
   This phenomenon is of fundamental importance in Science, Technology, Engineering and Mathematics and it is often caused by  the interaction between local attractive and nonlocal repulsive forces.

  	Instances of spontaneous pattern formation at a mesoscopic scale are that showed by certain suspensions of charged colloids, polymers and also by protein solutions, when the attractive and repulsive forces compete at some strength ratio. The phenomenon is since at least two decades object of experimental and computational investigation (see among the various papers on the subject \cite{poon, nat, BBCH, Bores, CCA,GCLW, IR}). In particular, one can observe gathering of the particles in lamellas (stripes) or bubbles (clusters) according to the different regimes between the two mutual interactions. These self-assembly processes play a crucial role in applications such as the production of photonic crystals, the possibility to control the formation of clusters in various diseases, nanolithography or gelation processes.
  	
  	For colloidal systems, the long-range repulsive forces have been shown on theoretical grounds to be represented by a Yukawa (or \emph{screened Coulomb}) potential \cite{DL,VO} (the so-called DLVO Theory).
  	
  	The kernel of the Yukawa potential is the following
  	\begin{equation}
  		\label{eq:Yu}
  		K_\mu(\zeta):=\frac{e^{-\mu|\zeta|}}{|\zeta|^{d-2}}, \quad\mu>0
  	\end{equation} 
  	if $d\geq 3$ and $K_\mu(\zeta):=-e^{-\mu|\zeta|}\ln(|\zeta|)$ if $d=2$.  
  	
  	The Yukawa potential was introduced in the 30s by Yukawa in particle physics \cite{Yu}. Other than for electrolytes and colloids, it is used also in plasma physics,  where it represents the potential of a charged particle in a weakly
  	nonideal plasma, in solid state physics, where it describes the effects of a charged particle in a sea of conduction
  	electrons, and in quantum mechanics.

  	Pattern formation in models for colloid particles involving the Yukawa potential as  repulsive term has been numerically studied in several papers (see e.g. \cite{ BBCH, Bores,CCA, GCLW, IR}) and lamellar (striped) phases has been  observed in suitable regimes. 
  	
  	In particular, some of these models (see e.g. \cite{BBCH,CCA,GCLW,IR}) use as short range attractive term the Yukawa potential with opposite sign and  parameter $\mu$ much larger than the one appearing in the repulsive Yukawa. A model of this kind is the following: for $d\geq1$, $\beta>1$, $L>0$, $E\subset\R^d$ $[0,L)^d$-periodic and $J>0$, consider
  	
  	\begin{align}
  	\tilde{\mathcal E}_{\beta,J,L}(E):=&\frac{1}{L^d}\Big(JC_{\beta,L}\int_{[0,L)^d }\int_{ \R^d}|\chi_E(x+\zeta)-\chi_E(x)|K_\beta(\zeta)\d\zeta\dx\notag\\
  	&-\int_{[0,L)^d }\int_{ \R^d}|\chi_E(x+\zeta)-\chi_E(x)|K_1(\zeta)\d\zeta\dx\Big),\label{E:Eintro}
  	\end{align}
  	where $C_{\beta,L}$ is a positive normalization constant depending on $\beta$ and $L$.

  	In Section \ref{sec:gammaconv} we show that, for a natural choice of the constant $C_{\beta,L}$ (see \eqref{eq:cbeta}) and substituting the Euclidean norm $|\cdot|$ with the $1$-norm $|z|_1=\sum_{i=1}^d|z_i|$ in \eqref{eq:Yu}, the functionals $	\tilde{\mathcal E}_{\beta,J,L}$ $\Gamma$-converge as $\beta\to+\infty$ to the following functional

  	\begin{equation}\label{E:F}
  		\begin{split}
  			\tilde{\Fcal}_{J,L}(E) := \frac{1}{L^d}\Big(J \per_1(E, [0,L)^d)  -  \int_{[0,L)^d}\int_{\R^d} |\chi_{E}(x + \zeta) - \chi_{E}(x) | K_1(\zeta) \d\zeta\dx \Big), 
  		\end{split}
  	\end{equation}
  	
  	where 
  	\begin{equation*} 
  		\begin{split}
  			\per_{1}(E,[0,L)^d):=\int_{\partial E\cap [0,L)^d}|\nu^E(x)|_1\, d\mathcal H^{d-1}(x),\quad \text{$|z|_1=\sum_{i=1}^d|z_i|$},
  		\end{split}
  	\end{equation*} 
  	with $\nu^E(x)$ exterior normal to $E$ in $x$, is the $1$-perimeter of $E$, and $K_1$ is from now on the  Yukawa-type potential
  	\begin{equation}\label{eq:Yu1}
  		K_1(\zeta):=\frac{e^{-|\zeta|_1}}{|\zeta|_1^{d-2}},
  	\end{equation}
  	
  	namely the potential obtained substituting the Euclidean norm in \eqref{eq:Yu} with the $1$-norm. Notice that, since we  assume periodicity of the sets w.r.t. $[0,L)^d$, the choice of the norm does not reduce the underlying symmetries of the problem, namely those w.r.t. permutation of coordinates.
  	
  	A functional of this kind (namely with the perimeter as attractive term and the Yukawa as repulsive term), but in a different regime, appears also in a series of papers \cite{Mur,GolSer1,GolSer2} in connection with the sharp interface limit of the Ohta-Kawasaki model for small volume fractions.

  	The potential \eqref{eq:Yu1} behaves, for short range interactions, like the Coulomb potential and, for long range interactions, like a strong decaying potential, analogously to the generalized anti-ferromagnetic potentials considered in \cite{DR,GR,GiuSeirGS}.  
  	
  	Our aim in this paper is to characterize minimizers of \eqref{E:F} for a suitable range of $J$.
  	
  	Evolutionary problems for attractive-repulsive models have been studied by various authors (see e.g. \cite{BDKMS,CDFL,CDKS,BCPS}), and in \cite{CDKS,BCPS} exponentially decaying kernels have been considered.
  	
  	The main difficulty in showing pattern formation lies in the fact that the functional exhibits a larger group of symmetries than the expected minimizers.

  	For one-dimensional models, where the symmetry breaking does not occur, pattern formation has been proved 
  	among others in \cite{muller1993singular,glllRP3}, 
  	using either convexity methods or reflection positivity techniques.
  	
  	In more space dimensions, in the discrete setting, pattern formation was shown in \cite{GiuSeirGS} for the kernel $\tilde K(\zeta)=\frac{1}{(|\zeta|_1+1)^p}$ and $p>2d$.
  	
  	In the continuous setting, pattern formation was shown for the first time for a functional which is invariant under permutation of coordinates in \cite{DR}. There the kernel $\tilde K$ can have any  exponent $p\geq d+2$.
  	
   Both in the discrete and  continuous case, one can show that there is a critical constant $J_c$ such that for every $J > J_c$ the minimizers are trivial, namely the minimizers are either the empty set or the whole $\R^d$. In both \cite{GiuSeirGS,DR}, the authors show that for $J$ close enough to $J_c$ the minimizers are a union of periodic stripes. For a comparision of the two papers see~\cite{DR}. From now on we will concentrate with the continous case. 

 In order to be more precise, let us recall that in the continuous setting a union of stripes is a $[0,L)^d$-periodic set which is, up to Lebesgue null sets, of the form $V_i^\perp+\hat Ee_i$ for some $i\in\{1,\dots,d\}$, where $V_i^\perp$ is the $(d-1)$-dimensional subspace orthogonal to $e_i$ and  $\hat{E}\subset \R$ with $\hat E\cap [0,L)=\cup_{k=1}^N(s_i,t_i)$. 
  	A union of stripes is periodic if $\exists\, h>0$, $\nu\in\R$ s.t. $\hat E\cap [0,L)=\cup_{k=0}^N(2kh+\nu,(2k+1)h+\nu)$.
  	
  	In this paper we  prove an analogous result for the functional \eqref{E:F}. 
  	
  	While for the power-like potential $\tilde K$ the physical exponents $p=d+1$ (thin magnetic films), $p=d$ (3D-micromagnetics) and $p=d-2$ (Coulomb potential) remain excluded by the results in \cite{GiuSeirGS,DR}, here we are able to prove pattern formation for a physical model.
  	
  	Let us now state our results precisely. First of all, there exists also in this case a critical constant $\tilde J_{\infty}$ such that if $J>\tilde J_\infty$ then the only minimizers are $\R^d$ and the empty set. Such a constant is given by 
  	\[
  	\tilde J_\infty:=\int_{ \R^d} |\zeta_1|K_1(\zeta)\d\zeta.
  	\]
  	
  	What one expects is that for values of $J$ strictly below $\tilde J_\infty$ minimizers are periodic unions of stripes of optimal period.
  	
  	Therefore one sets 
  	\[
  	\tilde J_M=\int_{ \R^{d-1}}\int_{-M}^M|\zeta_1|K_1(\zeta)\d\zeta 
  	\]
  	and considers the functional  
  	\begin{equation}\label{E:M}
  		\tilde{ \Fcal}_{\tilde J_M,L}(E)=\frac{1}{L^d}\Big(\tilde J_M\per_1(E,[0,L)^d)-\int_{[0,L)^d}\int_{\R^d}|\chi_E(x+\zeta)-\chi_E(x)|K_1(\zeta)\d\zeta\dx\Big).
  	\end{equation}
  	One can see that minimizing \eqref{E:M} in the class of periodic unions of stripes, for $1\ll 2M\leq L$, those with optimal energy have width and distance of order  $h_{M}\leq M$, $\frac{h_{M}}{M}\to1$ as $M\to+\infty$ and energy of order $e^*_M\geq-e^{-\alpha_MM}$, with $\alpha_M\leq1$, $\alpha_M\to1$ as $M\to+\infty$.  
  	
  	Therefore it is natural to  rescale the spacial variables and the functional  so that the optimal width and distance for unions of stripes is $O(1)$ and  the energy is $O(1)$.
  	
  	Setting $M\zeta'=\zeta$, $Mx'=x$, and $\tilde{ \Fcal}_{\tilde J_M,L}(E)=-e^*_M\Fcal_{M,L/M}(E/M)$ one ends up considering the rescaled functional
  	\begin{equation}
  		\Fcal_{M,L}(E)=\frac{M^2}{L^d}\Big(J_M\per_1(E,[0,L)^d)-\int_{[0,L)^d}\int_{\R^d}|\chi_E(x+\zeta)-\chi_E(x)|\bar K_M(\zeta)\dx\d\zeta\Big)
  	\end{equation}
  	where
  	\begin{equation*}
  		\begin{split}
  			\bar K_M(\zeta)  = \frac{-1}{{e^*_M} |\zeta|^{d-2}} e^{-M|\zeta |}.
  		\end{split}
  	\end{equation*}
  	and
  	\[
  	J_M=\int_{ \R^{d-1}}\int_{-1}^1\bar K_M(\zeta)|\zeta_1|\d\zeta.
  	\]

  	For fixed $M > 0$, consider first for all $L > 0$ the minimal value obtained by $\Fcal_{M,L}$ on $[0,L)^d$-periodic unions of stripes
  	and then the minimal among these values as $L$ varies in $(0,+\infty)$. We will denote this value by $e^*_M$. 
  	
  	By the reflection positivity technique (see~Section~\ref{sec:yukawa_1D_problem}), this value is attained on periodic stripes of width and distance $h^*_M > 0$, which is unique provided $M$ is large enough (see Theorem \ref{thm:unique}).
  	
  	Our main theorem is the following:

  	\begin{theorem}
  		\label{thm:main_yukawa}
  		There exists a constant $M_0$ such that for every $M> M_0$ and $L = 2kh^*_M$ for some $k\in \N$, then the minimizer of ${\Fcal}_{M,L}$  are optimal stripes of width and distance $h^*_M$. 
  	\end{theorem}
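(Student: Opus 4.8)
The plan is to follow the strategy of \cite{DR}, reducing the $d$-dimensional problem to a one-dimensional one via slicing, and then showing that the one-dimensional problem forces optimal stripes. The key structural fact is that both terms in $\Fcal_{M,L}$ are built from the ``jump'' quantity $|\chi_E(x+\zeta)-\chi_E(x)|$, which decomposes coordinate by coordinate. More precisely, for any direction $e_i$ one can rewrite $\Fcal_{M,L}(E)$ as an average over $(d-1)$-dimensional slices orthogonal to $e_i$ of a one-dimensional functional $\Fcal^{1D}$ acting on the slice $E^{x^\perp}_i := \{t\in\R : x^\perp + t e_i \in E\}$, plus a nonnegative ``transversal'' remainder coming from the portions of the perimeter not aligned with $e_i$ and from the nonlocal interactions in the transversal directions. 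The first step is therefore to make this decomposition precise and to record that, because of the choice $J_M=\int_{\R^{d-1}}\int_{-1}^1 \bar K_M(\zeta)|\zeta_1|\d\zeta$, the effective one-dimensional coupling constant is exactly tuned so that the $1$-periodic (in the rescaled variables) stripe pattern is a critical configuration of $\Fcal^{1D}$; this is where the reflection-positivity analysis of Section~\ref{sec:yukawa_1D_problem} and Theorem~\ref{thm:unique} enter, giving that on $[0,L)$-periodic one-dimensional sets, for $L=2kh^*_M$, the unique minimizer of $\Fcal^{1D}$ is the periodic stripe pattern of width and distance $h^*_M$, with minimal value $e^*_M$.

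The second step is a rigidity argument: one must show that if $E$ is a minimizer of $\Fcal_{M,L}$, then \emph{almost every} slice in \emph{some fixed} direction $e_i$ is the optimal one-dimensional periodic pattern, and that this pattern is the same for a.e. slice (so that $E$ is genuinely a union of stripes and not a ``mille-feuille'' whose layers wobble between directions or phases). The mechanism is the usual one: the transversal remainder is nonnegative and vanishes exactly when $E$ is a union of stripes orthogonal to $e_i$; combined with the one-dimensional lower bound $\Fcal^{1D}\ge e^*_M$ on each slice, this gives $\Fcal_{M,L}(E)\ge e^*_M$ with equality only if (a) the transversal remainder vanishes, forcing $E=V_i^\perp + \hat E e_i$ for some $\hat E\subset\R$ and some $i$, and (b) $\hat E$ achieves the one-dimensional minimum, which by Step~1 and the constraint $L=2kh^*_M$ forces $\hat E$ to be the periodic stripe set of period $2h^*_M$. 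Since optimal stripes are indeed admissible and attain $e^*_M$, this shows they are the minimizers. The role of $M>M_0$ is to guarantee (via Theorem~\ref{thm:unique}) uniqueness of $h^*_M$ and the quantitative positivity/coercivity estimates that make the slicing bound strict off the stripe configuration.

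The main obstacle — and the technical heart of the paper, as in \cite{DR} — is establishing that the slicing inequality is \emph{rigid}, i.e.\ that near-optimality in the $d$-dimensional functional propagates to optimality of essentially every one-dimensional slice, rather than allowing a small fraction of ``bad'' slices or a competition between different coordinate directions. This requires more than the naive slicing bound: one needs a quantitative version, controlling the $d$-dimensional energy excess from below by (an integral of) the one-dimensional energy excesses of the slices plus a term penalizing the transversal interactions, uniformly in $M$ large. Because the kernel $\bar K_M$ is exponentially localized at scale $1$ in the rescaled variables, the relevant estimates should be close in spirit to those for the power-law kernel with large exponent in \cite{DR}, and the plan is to adapt those localization and interaction estimates, checking that the exponential decay of $\bar K_M$ (with the normalization by $e^*_M$) gives at least as much control as the polynomial decay there — in particular that the constants can be taken uniform for $M>M_0$. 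A secondary technical point is to verify that all the auxiliary one-dimensional facts (existence, characterization and uniqueness of the optimal period $h^*_M$, and the sign of the relevant second-variation-type quantities) hold for this specific Yukawa-type kernel; this is exactly what Theorem~\ref{thm:unique} and the reflection-positivity computation of Section~\ref{sec:yukawa_1D_problem} are there to supply, so once those are in hand the remaining argument is structurally parallel to \cite{DR}.
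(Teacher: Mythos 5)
Your high-level strategy matches the paper's: reduce to a one-dimensional problem via a coordinate-wise decomposition of the jump, use reflection positivity (and Theorem~\ref{thm:unique}) for the 1D optimality, and then invoke a rigidity/stability argument patterned on \cite{DR} to show near-optimality in $d$ dimensions forces the striped structure. The paper does exactly this, with the kernel-specific work concentrated in Lemma~\ref{lemma:rim}, Proposition~\ref{lemma:yukawa_local_rigidity} and Lemma~\ref{lemma:yukawa_stimaContributoVariazionePiccola}, and then refers the parameter-choice endgame back to \cite{DR}[Section 7.3] just as you anticipate.

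One caution about your Step~2 as written: it is not true that the coordinate decomposition immediately gives ``$\Fcal_{M,L}(E)\ge e^*_M$ with equality only if $E$ is a union of stripes.'' The decomposition in \eqref{eq:yukawa_gstr1} is only a lower bound, the per-slice one-dimensional quantity $\sum_s r_{i,M}$ satisfies Lemma~\ref{lemma:yukawa_1D-opitmization} only up to a boundary defect $C_0$, and vanishing of the cross-interaction $I^i_{M,L}$ does not by itself single out stripes (checkerboards have to be excluded separately, as the paper does at the end of §\ref{sec:yukawa_taul}). The actual argument needs the averaging over cubes in §\ref{ss:average}, the local rigidity estimate (Proposition~\ref{lemma:yukawa_local_rigidity}) to show minimizers are locally close to stripes for $M$ large, and the local stability lemma (Lemma~\ref{lemma:yukawa_stimaContributoVariazionePiccola}) to show flatness is then forced; only after these is the periodic one-dimensional bound applied globally via Lemma~\ref{lemma:yukawa_stimaLinea}. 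You do flag this in your final paragraph as the ``main obstacle,'' so the gap is in exposition rather than in the plan, but the equality characterization in Step~2 should not be taken literally as a self-contained step.
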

  	
  	In Theorem \ref{thm:main_yukawa} notice that $M_0$ is independent of $L$.
  	
  	Notice that  the $[0,L)^d$-periodic boundary conditions were imposed in order to give sense to the functional which is otherwise not well-defined. If one is interested to show that optimal periodic stripes of width and distance $h^*_M$ are ``optimal'' if one varies also the periodicity, then it is not difficult  to see that Theorem~1.3 is sufficient. This process is similar to the ``thermodynamic limit'' which is of particular relevance in physics.

   By adapting some of the arguments in  \cite{DR}, one can provide a characterization of minimizers of $\Fcal_{M,L}$ also for arbitrary $L$, but this time with $M$ larger than a constant depending on $L$. Namely, one has the following  
  	
  	\begin{theorem}\label{T:main2}
  		Let $L>0$. Then there exists $\bar{M}>0$ such that $\forall\,M\geq \bar M$ there exists $h_{M,L}$  such that  the minimizers of $\mathcal F_{M,L}$ are periodic stripes of width and distance $h_{M,L}$.
  	\end{theorem}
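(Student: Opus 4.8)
The plan is to reduce the fixed-$L$ statement to the already-established Theorem~\ref{thm:main_yukawa}, by comparing the optimal period $h^*_M$ produced by the reflection-positivity analysis with the geometry of the torus $[0,L)^d$. Recall that $h^*_M$ is the width and distance of the optimal periodic stripes minimizing $\Fcal_{M,L}$ over \emph{all} periods, and that by the rescaling $h^*_M\to1$ as $M\to+\infty$ (this is exactly the statement, pre-rescaling, that $h_M/M\to1$ recorded before \eqref{E:M}). Fix $L>0$. The first step is purely arithmetic: since $h^*_M\to1$, for $M$ large there is a unique (for $M$ large, by Theorem~\ref{thm:unique}) integer $k=k_M$ such that $2k_M h^*_M$ is as close as possible to $L$; but of course $2k_Mh^*_M$ need not equal $L$. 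So rather than applying Theorem~\ref{thm:main_yukawa} verbatim, I would rerun the \emph{constrained} one-dimensional optimization: among periodic unions of stripes on $[0,L)^d$ — which by definition have width and distance $h$ with $L=2Nh$ for some $N\in\N$ — minimize $\Fcal_{M,L}$; call the minimizing width $h_{M,L}$, attained at some $N_{M,L}\in\N$ with $L=2N_{M,L}h_{M,L}$. This exists because it is a minimization of a continuous function over the countable set $\{L/(2N):N\in\N\}$ and the energy of very thin or very thick stripes is controlled (blows up / tends to the trivial value) by the one-dimensional estimates of Section~\ref{sec:yukawa_1D_problem}.

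The second and main step is to show that, for $M\geq\bar M(L)$, \emph{every} minimizer of $\Fcal_{M,L}$ on $[0,L)^d$-periodic sets is in fact a union of stripes (in one of the $d$ coordinate directions), after which the first step identifies it as periodic stripes of width $h_{M,L}$. This is where I expect to reuse the machinery of \cite{DR} essentially wholesale. The argument there proceeds by: (i) a localization/doubling of variables reducing the nonlocal energy to a sum of one-dimensional contributions along lines in each coordinate direction, with the local perimeter term splitting correspondingly; (ii) a rigidity statement showing that if the energy is close to the optimal stripe energy, then on most lines the set looks one-dimensional and "laminar"; (iii) a propagation argument upgrading "most lines are laminar" to "the set is exactly a union of stripes in a single fixed direction", using that mixing directions is strictly costly. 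The exponential kernel $\bar K_M$ has faster-than-power decay and is positive after the normalization by $-e^*_M<0$; it enjoys the same convexity/reflection-positivity structure as the power kernels $\tilde K$ with $p\geq d+2$, so the quantitative stripe-rigidity inequalities of \cite{DR} should transfer with the same proofs, the only new input being uniform-in-$M$ (for $M$ large) bounds on the relevant moments of $\bar K_M$, which follow from $e^*_M\sim -e^{-M}$ up to polynomial factors and dominated convergence.

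The third step is the threshold bookkeeping: one must check that the choice $J=J_M$ in $\Fcal_{M,L}$ is exactly the borderline value for which stripes beat both trivial competitors, so that the rigidity argument is not vacuous. By construction $J_M=\int_{\R^{d-1}}\int_{-1}^1\bar K_M(\zeta)|\zeta_1|\d\zeta$ is the rescaled analogue of $\tilde J_M$, and the corresponding $e^*_M<0$ guarantees nontrivial minimizers; one then runs the stability/rigidity argument at this $J$. The honest obstacle is step (ii)–(iii): the constants in the rigidity estimates of \cite{DR} depend on the kernel, and here the kernel depends on $M$, so one must verify that the required inequalities hold with $M$-uniform constants once $M\geq\bar M(L)$ — in particular that the "boundary layer" length (on which the kernel interaction is non-negligible) is $O(1)$ uniformly in $M$, which is exactly why $\bar M$ is allowed to depend on $L$ (unlike in Theorem~\ref{thm:main_yukawa}, the direct single-scale comparison of \cite{DR}-type is replaced here by a cruder argument whose constants degrade with the number of periods $\sim L$). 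I would organize the write-up so that all $M$-dependent estimates are isolated into a single lemma bounding moments of $\bar K_M$, and the rest is cited from \cite{DR} with the substitution $\tilde K\rightsquigarrow\bar K_M$.
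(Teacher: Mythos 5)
Your plan follows the route the paper itself indicates: the authors explicitly decline to write out a proof of Theorem~\ref{T:main2}, stating only that it ``can be obtained by using the new estimates present in this paper and the strategy contained in~\cite{GR,DR}.'' Your three-step outline (constrained one-dimensional optimization to identify $h_{M,L}$ among widths $L/(2N)$; rigidity/stability machinery to force minimizers to be stripes; threshold check) is exactly that strategy, and your explanation of why $\bar M$ must depend on $L$ --- the compactness/limiting argument of \cite{GR} loses control as the number of periods grows, which is the defect \cite{DR} later removes to obtain $L$-independence for Theorem~\ref{thm:main_yukawa} --- is the correct distinction.

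One imprecision worth flagging: you write that the rigidity inequalities of \cite{DR} ``should transfer with the same proofs, the only new input being uniform-in-$M$ bounds on moments of $\bar K_M$.'' The paper says otherwise: it explicitly states that ``the rigidity estimate (see Lemma~\ref{lemma:yukawa_local_rigidity}) and the stability lemma (see Lemma~\ref{lemma:yukawa_stimaContributoVariazionePiccola}) base on the specific properties of the Yukawa kernel and are therefore different,'' and devotes \Cref{sec:yukawa_taul} and \Cref{sec:main} to reproving them. The Yukawa case is not a cosmetic change of kernel: the limiting kernel $\bar K_\infty$ is $+\infty$ on $\{|\zeta|<1\}$ rather than a finite power law, and the term $r_{i,M}$ goes to $+\infty$ on close boundary points (Lemma~\ref{lemma:rim}), which changes the structure of the limiting rigidity argument (compare the role of $\mathbb 1^\infty_{B_i}$ in Proposition~\ref{lemma:yukawa_local_rigidity} with the finite penalties in \cite{DR}). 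For a clean write-up of Theorem~\ref{T:main2} you should cite the paper's own Proposition~\ref{lemma:yukawa_local_rigidity}, Lemma~\ref{lemma:yukawa_stimaContributoVariazionePiccola}, and Lemma~\ref{lemma:yukawa_stimaLinea} directly, rather than propose re-deriving them from the power-law versions in \cite{DR} via moment bounds; the \cite{GR,DR} strategy is then invoked only for the global assembly of these local estimates into the stripe conclusion.
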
 
  	
  	According to the next theorem, when $L$ is large then $h_{M,L}$ is close to $h^*_M$.
  	
  	\begin{theorem}\label{T:main3}
  		There exists $C>0$ and $\hat M>0$ such that for every $M>\hat M$ the width and distance $h_{M,L}$ of minimizers of $\Fcal_{M,L}$ satisfies
  		\[
  		\Big|h_{M,L}-{h^*_M}\Big|\leq \frac{C}{L}.
  		\]
  	\end{theorem}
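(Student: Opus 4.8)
The plan is to reduce the assertion to a one-dimensional statement about the stripe energy and then to exploit the (quantitative) non-degeneracy of its minimum. By the reduction to the one-dimensional problem and the reflection positivity technique of Section~\ref{sec:yukawa_1D_problem}, the value of $\Fcal_{M,L}$ on a $[0,L)^d$-periodic union of stripes of equal width and distance $h$ — admissible precisely when $L=2kh$ for some $k\in\N$ — depends on $h$ alone; denote it $e_M(h)$. By Theorem~\ref{thm:unique}, for $M$ large $h^*_M$ is the unique minimizer of $e_M$ on $(0,+\infty)$, $e_M(h^*_M)=e^*_M$, and, by the rescaling described before the statements, $h^*_M$ stays bounded, with $h^*_M\to1$ as $M\to\infty$. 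By Theorem~\ref{T:main2} (applicable for $M\ge\bar M(L)$) the minimizer of $\Fcal_{M,L}$ is a periodic union of stripes, so $h_{M,L}$ is exactly a minimizer of $e_M$ over the admissible set $\mathcal A_L:=\{L/(2k):k\in\N\}$; more generally $h_{M,L}$ may be defined as $\argmin_{\mathcal A_L}e_M$ for every $M>\hat M$, and the theorem is the assertion that the best admissible period is within $C/L$ of the globally optimal period $h^*_M$.

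I would first record two facts about $e_M$, with constants \emph{uniform in $M$} for $M$ large: (i) $e_M$ is $C^2$ near $h^*_M$, $e_M'(h^*_M)=0$, and there are $\delta>0$, $0<c\le C'$ with $c\,(h-h^*_M)^2\le e_M(h)-e^*_M\le C'\,(h-h^*_M)^2$ for $|h-h^*_M|\le\delta$; (ii) there is $\eta>0$ with $e_M(h)-e^*_M\ge\eta$ for $h\in(0,+\infty)$ with $|h-h^*_M|\ge\delta$. Granting (i)--(ii): pick $k\in\N$ nearest to $L/(2h^*_M)$ and set $h:=L/(2k)\in\mathcal A_L$; since $h^*_M$ is bounded, $|h-h^*_M|\le h^*_M/(2k)\le C_1/L$ for $L$ not too small. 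By minimality of $h_{M,L}$ one gets $e_M(h_{M,L})-e^*_M\le e_M(h)-e^*_M\le C'C_1^2/L^2$; for $L\ge L_0$ (with $L_0$ independent of $M$) the right-hand side is $<\eta$, so (ii) forces $|h_{M,L}-h^*_M|\le\delta$, and then (i) yields $c\,(h_{M,L}-h^*_M)^2\le C'C_1^2/L^2$, i.e.\ $|h_{M,L}-h^*_M|\le C/L$ with $C=C_1\sqrt{C'/c}$. For $L<L_0$ the estimate is trivial after enlarging $C$, since $h_{M,L}\le L/2$ and $h^*_M$ is bounded.

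The one genuinely delicate point is (i)--(ii) with constants independent of $M$. I would obtain it by writing $e_M(h)$ explicitly from the kernel $\bar K_M$: after integrating out the transverse directions and summing the resulting (geometrically converging) series of exponentials, $e_M$ is a smooth function of $h>0$, so one may either estimate $e_M''$ near $h^*_M$ directly, bounding it above and below uniformly in $M$, or — more conceptually — show that the rescaled one-dimensional energy converges in $C^2_{\mathrm{loc}}$ as $M\to\infty$ to a limit profile with a strictly non-degenerate unique minimum; the uniform gap in (ii) then follows from this convergence together with the uniqueness in Theorem~\ref{thm:unique}. The ``nearest admissible period'' argument of the second paragraph is then routine, and this $M$-uniform non-degeneracy of the $1$D energy at its minimum is where the real work lies.
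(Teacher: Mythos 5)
The paper states explicitly that it omits the proof of Theorem~\ref{T:main3}, deferring to the strategy of~\cite{GR,DR} combined with the new estimates of the present paper; so there is no in-paper proof to compare against line by line. Your overall architecture — observe that by Theorem~\ref{T:main2} the optimal $h_{M,L}$ is the minimizer of the one-dimensional stripe energy $e_M$ over the discrete admissible set $\mathcal A_L=\{L/(2k):k\in\N\}$, note that the nearest admissible period is within $O(1/L)$ of $h^*_M$, and then convert the resulting $O(1/L^2)$ energy excess into an $O(1/L)$ bound on $|h_{M,L}-h^*_M|$ via a quadratic lower growth of $e_M$ near $h^*_M$ — is the natural approach and is almost certainly what the reference to~\cite{GR,DR} is meant to supply. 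You also correctly isolate the genuinely nontrivial ingredient: a quantitative, $M$-uniform non-degeneracy of $e_M$ at $h^*_M$, plus a uniform energy gap away from it.

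Two points, however, deserve flagging. First, your proposed route (b) — $C^2_{\mathrm{loc}}$ convergence of $e_M$ to a smooth limiting profile with a non-degenerate minimum — is unlikely to go through as stated: the rescaled kernel $\bar K_M$ does not converge to a smooth function, but to $\bar K_\infty$ which is $+\infty$ on $\{|\zeta|_1<1\}$ and vanishes for $|\zeta|_1>1$ (see \eqref{eq:kiest}), so the limit of $e_M$ is a degenerate/singular object, not a nice $C^2$ profile. Route (a), i.e.\ direct control of $e_M''$ near $h^*_M$, is the viable one and is exactly what the proof of Theorem~\ref{thm:unique} begins; but that proof only establishes positivity of $e_M''(1)$ for $M$ large, not a two-sided bound with an $M$-bounded ratio $C'/c$, which is what your final step $|h_{M,L}-h^*_M|\le C_1\sqrt{C'/c}/L$ requires. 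You could weaken your stated requirement from ``constants uniform in $M$'' to ``ratio $C'/c$ bounded in $M$'' — which is all the argument uses — but even that needs to be extracted from the explicit series, and is not contained in the paper as written. Second, Theorem~\ref{T:main2} only guarantees that minimizers are stripes for $M\ge\bar M(L)$ with $\bar M(L)$ depending on $L$; your redefinition of $h_{M,L}$ as $\argmin_{\mathcal A_L}e_M$ for all $M>\hat M$ is the sensible reading of the statement, but you should say so explicitly, since otherwise ``the width and distance of the minimizer'' is undefined in the intermediate range $\hat M<M<\bar M(L)$.
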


As discussed in \cite{BBCH,CCA,IR,GCLW}, one of the possible models used to show gelification in charged colloids and pattern formation is to consider both as attractive and as repulsive term the Yukawa potential, with different signs and appropriate rescaling (see \eqref{E:Eintro}). 

 



Therefore, the following $\Gamma$-convergence result connects our analytical results  with the ones obtained in the above cited experiments and simulations for the functional \eqref{E:Eintro}.

\begin{theorem}\label{thm:gammayukintro}
         The functionals $\tilde{\mathcal E}_{\beta,J,L}$ defined in \eqref{E:Eintro} $\Gamma$-converge in the $L^1$ topology as $\beta\to+\infty$ and up to subsequences to the functional $\tilde{\mathcal F}_{J,L}$ defined in \eqref{E:F}.
\end{theorem}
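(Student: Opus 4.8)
The plan is to split $\tilde{\mathcal E}_{\beta,J,L}$ into its attractive and repulsive parts and treat them separately. For $\mu>0$ set
\[
\mathcal{K}_\mu(E):=\int_{[0,L)^d}\int_{\R^d}|\chi_E(x+\zeta)-\chi_E(x)|K_\mu(\zeta)\d\zeta\dx ,
\]
so that $\tilde{\mathcal E}_{\beta,J,L}(E)=\tfrac{J C_{\beta,L}}{L^d}\mathcal{K}_\beta(E)-\tfrac{1}{L^d}\mathcal{K}_1(E)$. Since $K_1\in L^1(\R^d)$ (the singularity $|\zeta|_1^{2-d}$ is integrable at the origin and $e^{-|\zeta|_1}$ gives decay at infinity), the functional $E\mapsto\mathcal{K}_1(E)$ is continuous for the $L^1$ convergence of the $[0,L)^d$-periodic characteristic functions: the inner integral $\zeta\mapsto\int_{[0,L)^d}|\chi_E(x+\zeta)-\chi_E(x)|\dx$ is stable under $L^1$ perturbations of $\chi_E$ and bounded by $2L^d$, so dominated convergence applies. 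As a continuous functional passes unchanged to both the $\Gamma$-$\liminf$ and the $\Gamma$-$\limsup$, it suffices to prove that $\tfrac{J C_{\beta,L}}{L^d}\mathcal{K}_\beta$ $\Gamma$-converges to $\tfrac{J}{L^d}\per_1(\cdot,[0,L)^d)$, i.e.\ that the suitably normalised nonlocal term is a nonlocal approximation of the anisotropic $1$-perimeter, in the spirit of the Bourgain--Brezis--Mironescu and Ponce type characterisations of $BV$ seminorms. The role of $C_{\beta,L}$ in~\eqref{eq:cbeta} is exactly to make this work: the change of variables $\zeta=\eta/\beta$ shows that $C_{\beta,L}\int_{\R^d}K_\beta(\zeta)|\zeta\cdot\nu|\d\zeta$ has a finite limit as $\beta\to+\infty$, and \eqref{eq:cbeta} normalises it so that this limit reproduces the surface tension of $\per_1$ on the coordinate directions.

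For the $\Gamma$-$\liminf$ inequality, take $\chi_{E_\beta}\to\chi_E$ in $L^1([0,L)^d)$ with $\liminf_\beta C_{\beta,L}\mathcal{K}_\beta(E_\beta)<+\infty$ (otherwise there is nothing to show). The first step is compactness: $\zeta\mapsto\|\chi_{E_\beta}(\cdot+\zeta)-\chi_{E_\beta}\|_{L^1([0,L)^d)}$ is subadditive, $C_{\beta,L}K_\beta$ concentrates at scale $\beta^{-1}$ and $C_{\beta,L}\int_{\R^d}K_\beta(\zeta)|\zeta|\d\zeta$ stays bounded; combining these facts yields a bound $\|\chi_{E_\beta}(\cdot+\zeta)-\chi_{E_\beta}\|_{L^1([0,L)^d)}\le C|\zeta|$ uniform in large $\beta$, hence $E$ has finite perimeter in $[0,L)^d$. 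One then fixes a small $r>0$, restricts the $\zeta$-integration to $B_r(0)$, slices in the coordinate directions (so that the weight $1$ attached to each coordinate in $|\cdot|_1$ appears naturally), uses the lower semicontinuity of the one-dimensional total variation along each slice together with the first-moment identity of~\eqref{eq:cbeta}, and finally lets $r\to0$ to absorb the error terms; this gives $\liminf_\beta C_{\beta,L}\mathcal{K}_\beta(E_\beta)\ge\per_1(E,[0,L)^d)$.

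For the $\Gamma$-$\limsup$ inequality it suffices, by a diagonal argument, to exhibit recovery sequences for $E$ in a class dense in energy; if $\per_1(E,[0,L)^d)=+\infty$ there is nothing to prove, so assume $E$ has finite $1$-perimeter. Take $E$ to be $[0,L)^d$-periodic and polyhedral with all faces contained in coordinate hyperplanes — such sets are dense in $L^1$ and in $1$-perimeter — and use the constant sequence $E_\beta\equiv E$. Splitting $\zeta$ into $|\zeta|<r$ and $|\zeta|\ge r$: the far part is $O(e^{-cr\beta})$ after multiplication by $C_{\beta,L}$, while the near part converges, by~\eqref{eq:cbeta} and dominated convergence (using $\|\chi_E(\cdot+\zeta)-\chi_E\|_{L^1([0,L)^d)}\le|D\chi_E|([0,L)^d)|\zeta|$ as a dominating function), to $\int_{\partial E\cap[0,L)^d}\psi(\nu^E)\d\mathcal H^{d-1}$, where $\psi(\nu):=\lim_\beta C_{\beta,L}\int_{\R^d}K_\beta(\zeta)|\zeta\cdot\nu|\d\zeta$; since here $\nu^E$ only takes the values $\pm e_i$, on which $\psi$ equals $1=|\pm e_i|_1$ by the choice~\eqref{eq:cbeta}, this integral is $\per_1(E,[0,L)^d)$. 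Diagonalising (polyhedral scale $\to0$ slowly as $\beta\to+\infty$) produces the recovery sequence for general $E$.

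Assembling the three steps, $\tfrac{J C_{\beta,L}}{L^d}\mathcal{K}_\beta$ $\Gamma$-converges to $\tfrac{J}{L^d}\per_1(\cdot,[0,L)^d)$, and adding back the continuous term $-\tfrac{1}{L^d}\mathcal{K}_1$ gives that $\tilde{\mathcal E}_{\beta,J,L}$ $\Gamma$-converges to $\tilde{\mathcal F}_{J,L}$; the extraction of a subsequence is needed only to guarantee the compactness (and, if $C_{\beta,L}$ does not stabilise along the whole family, the convergence of the normalisation) as $\beta\to+\infty$. The hard part will be the $\Gamma$-$\liminf$/compactness step: one must turn a bound on the nonlocal energy — whose kernel $C_{\beta,L}K_\beta$ has \emph{diverging} total mass as $\beta\to+\infty$ — into a uniform $BV$ bound, and at the same time make the lower bound sharp so that the limiting surface tension matches $\per_1$ exactly. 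This is where the subadditivity of the difference quotients, the concentration of the normalised kernel at scale $\beta^{-1}$, and the coordinate slicing that singles out the $1$-norm must all be used together.
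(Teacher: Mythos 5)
Your overall strategy matches the paper's at the top level: separate the $L^1$-continuous repulsive term $\mathcal K_1$ and reduce the claim to a nonlocal-to-local $\Gamma$-limit for the normalised attractive term $C_{\beta,L}\mathcal K_\beta$. The implementations differ: the paper first proves compactness and a uniform upper/lower bound comparable to $\mathcal H^{d-1}$, invokes the integral representation theorem of Bouchitt\'e--Fonseca--Mascarenhas \cite{Fon} to write the $\Gamma$-limit as $\int_{\partial E}\phi(\nu^E)\d\mathcal H^{d-1}$, and then computes the density $\phi$ on planar interfaces via the coordinate splitting identity; you instead sketch a direct Bourgain--Brezis--Mironescu/Ponce-type argument with coordinate slicing for the $\Gamma$-$\liminf$ and coordinate-polyhedral recovery sequences for the $\Gamma$-$\limsup$.

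There is, however, a genuine gap in the $\Gamma$-$\liminf$ step as you have set it up, and the obstruction is exactly the one your own slab heuristic makes visible. For a fixed planar interface $E$ with unit normal $\nu$ and the constant sequence $E_\beta\equiv E$, the computation you quote gives
\begin{equation*}
\lim_{\beta\to+\infty}C_{\beta,L}\mathcal K_\beta(E)=\int_{\partial E\cap[0,L)^d}\psi(\nu^E)\,\d\mathcal H^{d-1},\qquad \psi(\nu):=\lim_{\beta\to+\infty}C_{\beta,L}\int_{\R^d}|\zeta\cdot\nu|\,K_\beta(\zeta)\,\d\zeta.
\end{equation*}
By the definition of $C_{\beta,L}$ one indeed has $\psi(\pm e_i)=1=|\pm e_i|_1$, but for any $\nu$ with at least two nonzero components one has $\psi(\nu)<|\nu|_1$ \emph{strictly}: pointwise $|\zeta\cdot\nu|\le\sum_i|\zeta_i||\nu_i|$, with strict inequality on the set where the signs of $\zeta_i\nu_i$ disagree, and that set carries a positive fraction of the mass of the limit kernel; since by symmetry $\lim_\beta C_{\beta,L}\int\sum_i|\zeta_i||\nu_i|K_\beta\,\d\zeta=\sum_i|\nu_i|\psi(e_i)=|\nu|_1$, strictness follows. (In the simplest case $d=2$, kernel $e^{-|\zeta|_1}$, $\nu=(1,1)/\sqrt2$, one gets $\psi(\nu)=\tfrac34|\nu|_1$.) Because $\psi$ is convex and positively $1$-homogeneous, the functional $E\mapsto\int_{\partial E}\psi(\nu^E)\d\mathcal H^{d-1}$ is already $L^1$-lower semicontinuous, so the constant sequence is admissible and shows $\Gamma\text{-}\liminf_\beta C_{\beta,L}\mathcal K_\beta(E)\le\int_{\partial E}\psi(\nu^E)\d\mathcal H^{d-1}<\per_1(E,[0,L)^d)$ whenever $\partial E$ is not coordinate-aligned. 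Thus the claimed inequality $\liminf_\beta C_{\beta,L}\mathcal K_\beta(E_\beta)\ge\per_1(E,[0,L)^d)$ is false, and your $\Gamma$-$\limsup$ construction (a staircase at a scale $\gg\beta^{-1}$, which correctly attains $\per_1$) is simply not a recovery sequence for oblique $E$ because a strictly cheaper one exists. A coordinate-direction slicing can at best produce the lower bound $\int\psi(\nu^E)$, not $\per_1$, and this is where the argument breaks.

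You should also be aware that the same difficulty sits inside the paper's own computation of $\phi$: the term-by-term limits of the coordinate splitting give $\per_{1i}$ for each $i$, and the equality $\phi(\nu)=|\nu|_1$ is then asserted because the cross-interaction term is claimed to be $O(C_{\beta,L}/\beta^4)$. For a non-coordinate interface, however, the two factors in the cross term both localise $x$ to the same one-dimensional transversal direction, so the cross term is of the same order as the main terms; its normalised limit equals $\tfrac12\bigl(\per_1(E)-\int_{\partial E}\psi(\nu^E)\bigr)>0$, not zero. In short, the density of the $\Gamma$-limit of $C_{\beta,L}\mathcal K_\beta$ is $\psi$, which agrees with $|\cdot|_1$ on $\pm e_i$ but is strictly smaller on oblique directions, and neither your sketch nor the argument in the paper addresses this discrepancy.
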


   The main theorems in this paper are Theorem~\ref{thm:main_yukawa} and  Theorem~\ref{thm:gammayukintro}.
   The proofs of Theorem~\ref{T:main2} and Theorem~\ref{T:main3} can be obtained by using the new estimates present in this paper and the strategy contained in~\cite{GR,DR}.
   For this reason in this paper we will focus on the proof of  Theorem~\ref{thm:main_yukawa} and Theorem~\ref{thm:gammayukintro}.

  	
  	The general strategy of the proof is similar to the one introduced in \cite{DR} and consists in the following steps: decomposition of the functional in terms which penalize deviations from being a stripe (\cite{GR,DR}); decomposition of $\R^d$ in different regions according to how much in a region the set $E$ ``resembles'' a stripe (\cite{GiuSeirGS,DR}); rigidity estimate to prove that in the limit $M\to+\infty$ minimizers approach a striped structure (\cite{GR,DR}); stability estimates to prove that once close to a stripe the most convenient thing is to be flat (\cite{DR}); use of the reflection positivity technique.
  	
  	However, the rigidity estimate (see Lemma \ref{lemma:yukawa_local_rigidity}) and the stability lemma (see Lemma \ref{lemma:yukawa_stimaContributoVariazionePiccola}) base on the specific  properties of the Yukawa kernel and are therefore different.
  	Finally, in Section \ref{sec:gammaconv} we prove the Theorem~\ref{thm:gammayukintro}.
  
      %

  \subsection{Structure of the paper}
  	This paper is organized as follows:   in Section \ref{sec:prel}, after setting the notation, we estimate the energy and width of optimal stripes and we rescale the functional accordingly. Then, we identify suitable quantities that penalize deviations from being a union of stripes. In Section \ref{sec:yukawa_1D_problem} we show that in the interesting regime the width of minimizers is uniquely determined and minimizers are periodic. Section \ref{sec:yukawa_taul} contains the main rigidity estimate and Section \ref{sec:main} the proof of Theorem \ref{thm:main_yukawa}. In Section \ref{sec:gammaconv} we prove the $\Gamma$-convergence of the double Yukawa functional \eqref{E:Eintro} to \eqref{E:F} as $\beta\to+\infty$.

 \section{Setting and preliminary results}
 \label{sec:prel}

 In this section, we set the notation and we introduce some preliminary results in the spirit of those given in \cite{GR} and \cite{DR}, which will be necessary to carry on our analysis. 
 
 \subsection{Notation and preliminary definitions}
 
 In the following, we let $\N=\{1,2,\dots\}$, $d\geq 1$. On $\R^d$, we let $\langle\cdot,\cdot\rangle$ be the Euclidean scalar product and $|\cdot|$ be the Euclidean norm.  We let $(e_1,\dots,e_d)$ be the canonical basis in $\R^d$ and  for $y\in\R^d$ we let $y_i=\langle y,e_i\rangle e_i$ and $y_i^\perp:=y-y_i$. For $y\in\R^d$, let 
 $|y|_1=\sum_{i=1}^d|y_i|$ be its $1$-norm  and $|y|_\infty=\max_i|y_i|$ its $\infty$-norm.

 Given a measurable set $A\subset\R^d$, let us denote by $\mathcal H^{d-1}(A)$ its $(d-1)$-dimensional Hausdorff measure and $|A|$ its Lebesgue measure. Moreover, let $\chi_A:\R^d\to\R$ the function defined by
 \begin{equation}
 \chi_A(x)=\left\{\begin{aligned}
 &1 && &\text{if $x\in A$}\\
 &0 && &\text{if $x\in\R^d\setminus A$}
 \end{aligned}\right.
 \end{equation}
 and by $\mathbb 1^\infty_A:\R^d\to\R\cup\{+\infty\}$ the function
  \begin{equation}
 \mathbb 1^\infty_A(x)=\left\{\begin{aligned}
 &+\infty && &\text{if $x\in A$}\\
 &0 && &\text{if $x\in\R^d\setminus A$}
 \end{aligned}\right.
 \end{equation}
 
 A set $E\subset\R^d$ is of (locally) finite perimeter if the distributional derivative of $\chi_E$ is a (locally) finite measure. We let $\partial E$ be the reduced boundary of $E$.  We call $\nu^E$ the exterior normal to $E$.
 
 The first term of our functional is, up to a constant, the $1$-perimeter of a set relative to $[0,L)^d$, namely
 
 \[
 \per_1(E,[0,L)^d):=\int_{\partial E\cap [0,L)^d}|\nu^E(x)|_1\d\mathcal H^{d-1}(x)
 \]
 and, for $i\in\{1,\dots,d\}$ 
 \begin{equation}
 \label{eq:yukawa_perI}
 \per_{1i}(E,[0,L)^d)=\int_{\partial E\cap [0,L)^d}|\nu^E_i(x)|\d\mathcal H^{d-1}(x),
 \end{equation}
 thus $\per_1(E,[0,L)^d)=\sum_{i=1}^d\per_{1i}(E,[0,L)^d)$. Notice that in the definition of $\per_1$ the norm applied to the exterior normal $\nu_E$ is the $1$-norm, instead of the Euclidean norm used to define the standard perimeter. 

 Because of  periodicity, w.l.o.g.  we always assume that $|D\chi_E|(\partial [0,L)^d)=0$, being $\chi_E$ the characteristic function of $E$ and $D\chi_E$ its distributional derivative.
 
 When $d=1$ one can define 
 \[
 \per_1(E, [0,L))  =\per(E,[0,L))= \#(\partial E \cap [0,L) ),
 \]
 where $\partial E$ is the reduced boundary of $E$.
 
 While writing slicing formulas, with a slight abuse of notation we will sometimes identify $x_i\in[0,L)^d$ with its coordinate in $\R$ w.r.t. $e_i$ and $\{x_i^\perp:\,x\in[0,L)^d\}$ with $[0,L)^{d-1}\subset\R^{d-1}$. 
 
 In Section \ref{sec:yukawa_taul} we will have to apply slicing on small cubes around a point. Therefore we need to introduce the following notation. For $z\in[0,L)^d$ and $r>0$, we define $Q_r(z)=\{x\in\R^d:\,|x-z|_\infty\leq r\}$.
 For $r> 0$ and $x^{\perp}_i$ we let $Q_{r}^{\perp}(x^\perp_{i}) = \{z^\perp_{i}:\, |x^{\perp}_{i} - z^{\perp}_{i} |_\infty \leq r  \}$ or we think of $x_i^\perp\in[0,L)^{d-1}$ and $Q_r^\perp(x_i^\perp)$ as a subset of $\R^{d-1}$. 
 Since the subscript $i$ will be always present in the centre (namely $x_i^\perp$) of such $(d-1)$-dimensional cube, the implicit dependence on $i$ of $Q_r^\perp(x_i^\perp)$ should be clear. 
 We denote also by $Q^i_r(t_i)\subset\R$ the interval of length $r$ centred in $t_i$.
 
 Now, let us turn to the elements defining the nonlocal  term in \eqref{E:F}. The kernel is the so called Yukawa kernel
 \begin{equation}
 K_1(\zeta)=\frac{e^{-|\zeta|_1}}{|\zeta|_1^{d-2}}, \quad\zeta\in\R^d,
 \end{equation}
 
 up to considering the $1$-norm in the exponential instead of the Euclidean norm. 
 
 As commented in the Introduction, such a kernel is both physical and reflection positive, namely it satisfies the following property (see property (2.6) in \cite{DR}): the function
 \begin{equation*}
 \begin{split}
 \widehat K_1 (t) := \int_{\R^{d-1}} K_1(t, \zeta_2,\ldots,\zeta_d)  \d\zeta_2\cdots\d\zeta_d. 
 \end{split}
 \end{equation*}
 
 is the Laplace transform of a nonnegative function (see Lemma \ref{lemma:yukawa_laplace}).
 
 Notice that 
 \begin{equation}
 \widehat K_1(t)=e^{-|t|}C(t),
 \end{equation}
 where $0<C(t)\leq \bar C$ and $C(t)\to0$ as $|t|\to+\infty$.

As in \cite{GiuSeirGS, GR, DR}, there exists a critical constant $\tilde J_{\infty}$ such that if $J>\tilde J_\infty$, the functional \eqref{E:F} is nonnegative and therefore has trivial minimizers. Such a constant is given by

\[
\tilde J_\infty:=\int_{ \R^d} |\zeta_1|K_1(\zeta)\d\zeta.
\] 

A proof of this fact is analogous to \cite{GR}[Proposition 3.5], so we omit it.

Letting, for $M\geq0$,

\[
\tilde J_M:=\int_{\R^{d-1}}\int_{-M}^M |\zeta_1|{K}_1(\zeta) \d\zeta=\int_{-M}^M|\zeta_1|\widehat K_1(\zeta_1)\d\zeta_1
\]
and using the fact that, for all $J<\tilde J_\infty$, $J=\tilde J_M$ for some $M\geq0$, we come to the functional

\begin{equation}
\tilde{ \Fcal}_{\tilde J_M,L}(E)=\frac{1}{L^d}\Big[\per_1(E,[0,L)^d)\int_{\R^{d-1}}\int_{-M}^M |\zeta_1|{K}_1(\zeta) \d\zeta-\int_{[0,L)^d}\int_{\R^d}|\chi_E(x+\zeta)-\chi_E(x)|K_1(\zeta)\d\zeta\dx\Big].
\end{equation}

\subsection{Energy and width of optimal stripes}
\label{ss:ew}
We are interested in showing pattern formation for $J=\tilde J_M$, with $M$ large but finite.  Since we expect the width of the stripes to become larger and larger and the value of the functional to approach $0$ as $M$ tends to $+\infty$, it is convenient to find the optimal energy and width among all the $[0,L)^d$-periodic stripes so as to find suitable  rescaling parameters.

Let 
\[
E_h=\bigcup_{j=1}^N((2j-1)h, 2jh)\times\R^{d-1}
\]
be a periodic union of stripes of width and distance $h$ (in particular, $L=Nh$) and assume that 
\[
1\ll 2M\leq L, 
\] 

The energy of $E_h$, that we denote by $e_{M}(h)$, is given by the following formula:
\begin{align}
e_{M}(h)&=\tilde{ \Fcal}_{\tilde J_M,L}(E_h)={2}\Big[\frac{1}{h}\int_{0}^M\zeta_{1}\widehat K_1(\zeta_1)\d\zeta_1-\frac{1}{h}\int_{0}^{+\infty}\min\{h,\zeta_1\}\widehat K(\zeta_1)\d\zeta_1\notag\\
&-\frac{1}{h}\sum_{k\in\N}\int_0^h\int_{2kh}^{(2k+1)h}\widehat K_1(u-v)\dv\du\Big].\label{eq:ehm}
\end{align}
In particular, if $h<M$ then
\begin{equation}
\label{eq:e<m}
e_{M}(h)=\frac1h\int_h^M\zeta_1\widehat K_1(\zeta_1)\d\zeta_1-\int_h^{+\infty}\widehat K_1(\zeta_1)\d\zeta_1-\frac{1}{h}\sum_{k\in\N}\int_0^h\int_{2kh}^{(2k+1)h}\widehat K_1(u-v)\dv\du,
\end{equation}

while if $h>M$ 

\begin{equation}
\label{eq:e>m}
e_{M}(h)=-\frac{1}{h}\int_M^{+\infty}\min\{h,\zeta_1\}\widehat K_1(\zeta_1)\d\zeta_1 -\frac{1}{h}\sum_{k\in\N}\int_0^h\int_{2kh}^{(2k+1)h}\widehat K_1(u-v)\dv\du. 
\end{equation}

Let us check that 
\[
e_{M}(h)>e_{M}(M),\quad\forall\,h>M.
\]

First of all notice that, since 
\[
\int_M^h\frac{\rho}{h}\widehat K_1(\rho)\d\rho<\int_M^h\widehat K_1(\rho)\d\rho \quad\forall\,h>M,
\]
the first term in \eqref{eq:e>m} is smaller if $h=M$. 

Then we show that, setting
\[
g(h):=\frac1h\sum_{k\in\N}\int_0^h\int_{2kh}^{(2k+1)h}\widehat K_1(u-v)\du\dv,
\]
$g'(h)<0$ if $h>M$. From these two facts one deduces immediately that $M$ is minimal among all $h\geq M$. In the estimates below, one can assume w.l.o.g. that
\begin{equation}
\label{eq:Kest}
\widehat K_1(z)\sim e^{-|z|}.
\end{equation}
One has that
\begin{align}
g'(h)&=-\frac{1}{h^2}\sum_{k\in\N}\int_0^h\int_{2kh}^{(2k+1)h}\widehat K_1(u-v)\du\dv+\frac1h\sum_{k\in\N}\int_{2kh}^{(2k+1)h}\widehat K_1(h-v)\dv\notag\\
&+\frac1h\sum_{k\in\N}(2k+1)\int_0^h\widehat K_1(u-(2k+1)h)\du-\frac1h\sum_{k\in\N}(2k)\int_0^h\widehat K_1(u-(2k)h)\du\notag\\
&\sim-\frac{1}{h^2}\sum_{k\in\N}\int_0^h\int_{2kh}^{(2k+1)h}\widehat K_1(u-v)\du\dv+\sum_{k\in\N}\frac{-(2k-1)e^{-(2k-1)h}+2(2k)e^{-2kh}-(2k+1)e^{-(2k+1)h}}{h}\notag\\
&<0.\label{eq:g'h}
\end{align}

Let us deal now with the case $h<M$. As for the first two terms in \eqref{eq:e<m}, making the same approximation as in \eqref{eq:Kest}, one obtains
\begin{align}
\frac1h\int_h^M\rho\widehat K_1(\rho)\d\rho-\int_h^{+\infty}\widehat K_1(\rho)\d\rho\sim\frac{e^{-h}(h+1)-e^{-M}(M+1)}{h}-e^{-h}\notag\\
\sim \frac{e^{-h}-e^{-M}(M+1)}{h}.
\end{align}
The last term of \eqref{eq:e<m} can be estimated as follows:
\begin{align}
-\frac{1}{h}\sum_{k\in\N}\int_0^h\int_{2kh}^{(2k+1)h}\widehat K_1(u-v)\dv\du&\sim-\sum_{k\in\N}\frac{(e^{-2kh}-e^{-(2k+1)h})(e^h-1)}{h}\notag\\
&\sim-\frac{e^{-h}(1-e^{-h})}{(1+e^{-h})h}.
\end{align}
Therefore, for $h<M$
\begin{align}
e_M(h)\sim\frac{2e^{-2h}-e^{-M}(M+1)-e^{-h}e^{-M}(M+1)}{h(1+e^{-h})}.
\end{align}
Hence  we have the following: for any $C<1/2$, if $h\leq CM$ and $M$ is sufficiently large depending on $C$, then $e_M(h)>0$, thus $h$ cannot be optimal. 
Finally, one has that
\begin{align}
e'_M(h)\sim\frac{-4e^{-2h}h(1+e^{-h})-2e^{-2h}+e^{-M}(M+1)[-(1+e^{-h})^2+3h(1+e^{-h})e^{-h}]}{h^2(1+e^{-h})^2},
\end{align}
which is negative if $h\in[M/2,M)$ and $M$ is large enough.

Therefore, if $h_M$ is the optimal width, then 

\[
 h_M\leq M\quad\text{ and }\quad h_M/M\to1 \text{ as }M\to+\infty.
\]

As a consequence, the following holds.

	\begin{equation}
		\label{eq:lemma_stima_energia}
			e_{M}(h) \sim - e^{-\alpha_M M}
	\end{equation}
	for some $\alpha_M\leq1$, $\alpha_M\to1$ as $M\to+\infty$.

\subsection{Rescaling}
\label{ss:rescaling}

Let us denote by $h_{M,L}$ an optimal width and distance and by $e^*_M$ the minimal energy for the functional $\Fcal_{\tilde J_M,L}$.

As we already saw

\begin{equation*}
\begin{split}
M/2 \leq h_{M,L}  \leq M  \qquad\text{and} \qquad e^*_{M} \geq -e^{-\alpha_M M}
\end{split}
\end{equation*}
where $\alpha_M \to 1$ and $h_{M,L}/M \to 1$ as $M\to+\infty$ for $1<< M << L$.

In this section we will rescale the spacial variables and the functional  so that the optimal width and distance for unions of stripes is $O(1)$ and  the energy is $O(1)$.

By the change of variables $M \zeta' = \zeta $, $Mx'=x$ we have that
\begin{equation*}
\begin{split}
&\per_1(E/M, [0,L/M)^d) =  M^{1-d} \per_1(E, [0,L)^d), \\ 
&J_M := -\int_{\R^{d-1}} \int_{-1}^1  \frac{|\zeta'_1|}{e^*_M|\zeta' |^{d-2}} \exp(-M|\zeta'|) \d\zeta' 
= {-e^*_M}^{-1}M^{-3} \int_{\R^{d-1}} \int_{-M}^M |\zeta_1 | K_1(\zeta) \d\zeta   = {-e^*_M}^{-1}M^{-3}\tilde{J}_M\\
&\text{and}
\\
&\int_{[0,L)^d} \int_{\R^d} |\chi_{E}(x+\zeta) - \chi_E(x) | K_1(\zeta)\dx\d\zeta = M ^{d+2} \int_{[0,L/M)^d} \int_{\R^d} \frac{|\chi_{E/M}(x' + \zeta') -\chi_{E/M}(x')|}{ |\zeta' |^{d-2} } e^{-M|\zeta' |} \dx'\d\zeta'.
\end{split} 
\end{equation*}

Thus we have that
\begin{equation*}
\begin{split}
\tilde{J}_M  \per_1(E,[0,L)^d) = -M^{d+2} J_M e^*_M\per_1(E/M, [0,L/M)^d).
\end{split}
\end{equation*}

Finally defining
\begin{equation*}
\begin{split}
\bar K_M(\zeta)  = \frac{-1}{{e^*_M} |\zeta|_1^{d-2}} e^{-M|\zeta |_1}
\end{split}
\end{equation*}
and putting everything together we have that
\begin{equation*}
\begin{split}
\tilde \Fcal_{\tilde{J}_M,L}(E)  = \frac{-M^{d+2}e^*_M}{L^d}\Big( J_M \per_1(E/M, [0,L/M)^d) - \int_{[0,L/M)^d} \int_{\R^d} {|\chi_{E/M}(x' + \zeta') -\chi_{E/M}(x')|} \bar K_M(\zeta') \dx'\d\zeta'   \Big ).
\end{split}
\end{equation*}

Then let us set
\begin{equation}
\tilde{ \Fcal}_{\tilde J_M,L}(E)=-e^*_M\Fcal_{M, \tilde L}(\tilde E)
\end{equation}
where $\tilde L=L/M$ and $\tilde E=E/M$, and let us drop the tildes in the r.h.s.. Hence
\begin{equation}
\Fcal_{M,L}(E)=\frac{M^2}{L^d}\Big(J_M\per_1(E,[0,L)^d)-\int_{[0,L)^d}\int_{\R^d}|\chi_E(x+\zeta)-\chi_E(x)|\bar K_M(\zeta)\dx\d\zeta\Big).
\end{equation}

 Notice also that, due to \eqref{eq:lemma_stima_energia},  there exists a constant $1>\gamma_M > 0$ such that 
\begin{equation}\label{eq:kmest}
\begin{split}
\bar K_{M}(\zeta) \geq \frac{1}{|\zeta|_1^{d-2}} e^{ -M (|\zeta|_1  - \gamma_M) }
\end{split}
\end{equation}
with $\gamma_M\to 1$ as $M\to+\infty$.

For simplicity of notation we define 
\begin{equation}\label{eq:kmh}
\begin{split}
\widehat {\bar K}_M(t) := \int_{\R^{d-1}} \bar K_M(t,\zeta_2,\ldots,\zeta_d) \d\zeta_2\cdots\d\zeta_d. 
\end{split}
\end{equation}

\subsection{Splitting and lower bounds}

Using the equality
\begin{equation*}
   \begin{split}
|\chi_E(x)-\chi_E(x+\zeta)| = &|\chi_E(x)-\chi_E(x+\zeta_i)|+|\chi_E(x+\zeta)-\chi_E(x+\zeta_i)| \\ &-2|\chi_E(x)-\chi_E(x+\zeta_i)||\chi_E(x+\zeta)-\chi_E(x+\zeta_i)|
   \end{split}
\end{equation*}
one splits the nonlocal term  getting the following lower bound:

\begin{align}
\Fcal_{M,L}(E)&\geq\frac{M^2}{L^d}\sum_{i =1}^d\Big[\int_{[0,L)^d \cap\partial E}\int_{ \R^{d-1}}\int_{-1}^1|\nu^E_i(x)||\zeta_i|\bar K_M(\zeta)\d\zeta\d\mathcal H^{d-1}(x)\notag\\
&-\int_{[0,L)^d}\int_{ \R^d}|\chi_E(x+\zeta_i)-\chi_E(x)|\bar K_M(\zeta)\d\zeta\dx\Big]\notag\\
&+\frac{2}{d}\frac{M^2}{L^d}\sum_{i =1}^d\int_{[0,L)^d}\int_{ \R^d}|\chi_E(x+\zeta_i)-\chi_E(x)||\chi_E(x+\zeta_i^\perp)-\chi_E(x)|\bar K_M(\zeta)\d\zeta\dx\notag\\
&=\frac{M^2}{L^d}\Big(\sum_{i=1}^d\mathcal G^i_{M,L}(E)+\sum_{i=1}^d I^i_{M,L}(E)\Big),\label{eq:yukawa_gstr1}
\end{align}

where 
\[
\mathcal G^i_{M,L}(E):=\int_{[0,L)^d \cap\partial E}\int_{ \R^{d-1}}\int_{-1}^1|\nu^E_i(x)||\zeta_i|\bar K_M(\zeta)\d\zeta\d\mathcal H^{d-1}(x)-\int_{[0,L)^d}\int_{ \R^d}|\chi_E(x+\zeta_i)-\chi_E(x)|\bar K_M(\zeta)\d\zeta\dx
\]
and 
\[
I^i_{M,L}(E):=\frac{2}{d}\int_{[0,L)^d}\int_{ \R^d}|\chi_E(x+\zeta_i)-\chi_E(x)||\chi_E(x+\zeta_i^\perp)-\chi_E(x)|\bar K_M(\zeta)\d\zeta\dx.
\]
Moreover, let 
\[
I_{M,L}(E):=\sum_{i=1}^dI^i_{M,L}(E).
\]

One can further express $\mathcal G^i_{M,L}(E)$ as a sum of contributions obtained by first slicing and then considering interactions with neighbouring points on the slice lying on $\partial E$, namely

\begin{equation}
\mathcal G^i_{M,L}(E)=\int_{[0,L)^{d-1}}\sum_{s \in \partial E_{t^\perp_i}\cap[0,L)}r_{i,M}(E,t_i^\perp,s)\dt_i^\perp
\end{equation}
where for $s\in\partial E_{t_i^\perp}$
\begin{align}
r_{i,M}(E,t_i^\perp,s):=\int_{-1}^1 |\zeta_{i}| \widehat{\bar K}_M(\zeta_{i})\d\zeta_i &- \int_{s^-}^{s}\int_{0}^{+\infty} |\chi_{E_{t_{i}^{\perp}}}(u + \rho) - \chi_{E_{t_{i}^{\perp}}}(u) | \widehat{\bar K}_M(\rho)\d\rho \du \notag \\ & - \int_{s}^{s^+}\int_{-\infty}^{0} |\chi_{E_{t_{i}^{\perp}}}(u + \rho) - \chi_{E_{t_{i}^{\perp}}}(u) |\widehat{\bar K}_M(\rho) \d\rho \du\label{eq:rimdef}
\end{align}
and
\begin{equation}
\label{eq:yukawa_s+s-}
\begin{split}
s^+ &= \inf\{ t' \in \partial E_{t_{i}^{\perp}}, \text{with } t' > s  \} \\ s^- &= \sup\{ t' \in \partial E_{t_{i}^{\perp}}, \text{with } t' < s  \}. 
\end{split}
\end{equation}

Setting
\begin{equation}\label{eq:yukawa_fE}
\begin{split}
f_E(t^\perp_i,t_i,t'^\perp_i,t'_i):=|\chi_E(t_i^\perp +t_i+ t'_i) - \chi_E(t_i + t^\perp_i ) | | \chi_{E}(t_i^\perp +t_i+ t'^\perp_i) - \chi_E(t_i + t^\perp_i)  |,
\end{split}
\end{equation}
we can rewrite the last term in the r.h.s. of~\eqref{eq:yukawa_gstr1} as
 
\begin{align}
\label{eq:yukawa_decomp_double_prod}
I^i_{M,L}(E)=\frac{2}{d} \int_{[0,L)^d } \int_{\R^d} f_{E}(t_{i}^{\perp},t_{i},\zeta_{i}^{\perp},\zeta_{i}) \bar K_{M}(\zeta)  \d\zeta \dt &= \int_{[0,L)^{d-1}} \sum_{s\in \partial E_{t_{i}^{\perp}}\cap [0,L)} v_{i,M}(E,t_{i}^{\perp},s)\dt_{i}^\perp \notag\\
&+ \int_{[0,L)^d} w_{i,M}(E,t_i^\perp,t_i) \dt
\end{align}

where
\begin{equation}\label{eq:witau}
{w}_{i,M}(E,t_{i}^{\perp},t_{i}) = \frac{1}{d}\int_{\R^d}  
f_{E}(t_{i}^{\perp},t_{i},\zeta_{i}^{\perp},\zeta_{i}) \bar K_{M}(\zeta)  \d\zeta. 
\end{equation}
and
\begin{equation}\label{eq:vitau}
v_{i,M}(E,t_{i}^{\perp},s) =  \frac{1}{2d}\int_{s^{-}}^{s^{+}} \int_{\R^{d}} f_{E}(t^{\perp}_{i},u,\zeta^{\perp}_{i},\zeta_{i}) \bar K_{M}(\zeta) \d\zeta\du
\end{equation}
and $s^+,s^-$ as in \eqref{eq:yukawa_s+s-}. 

Notice that the term $r_{i,M}$ is  penalizing sets whose slices in direction $i$ have boundary points at distance smaller than some given constant (see Lemma \ref{lemma:rim}).

The term $v_{i,M}$  penalizes oscillations in direction $e_i$ whenever the neighbourhood of the point $(t_{i}^{\perp}+se_i)$ is close in $L^1$ to a stripe oriented along $e_j$. 
This  statement is made precise in Lemma~\ref{lemma:yukawa_stimaContributoVariazionePiccola}.

\subsection{Averaging}
\label{ss:average}

We now define the ``local contribution'' to the energy in a cube $Q_l(z)$, where $z\in[0,L)^d$ and $0<l<L$ is a length which in the Section \ref{sec:yukawa_taul} will be fixed independently of $L$ ($l$ will depend only on the dimension).

We let 

 \begin{equation} 
\label{eq:yukawa_fbartau}
\begin{split}
\bar{F}_{i,M}(E,Q_{l}(z)) &:= \frac{1}{l^d  }\Big[\int_{Q^{\perp}_{l}(z_{i}^{\perp})} \sum_{\substack{s \in \partial E_{t_{i}^{\perp}}\\ t_{i}^{\perp}+se_i\in Q_{l}(z)}} (v_{i,M}(E,t_{i}^{\perp},s)+ r_{i,M}(E,t_{i}^{\perp},s)) \dt_{i}^{\perp} + \int_{Q_{l}(z)} {w_{i,M}(E,t_{i}^{\perp}, t_i) \dt}\Big],\\
\bar{F}_{M}(E,Q_{l}(z)) &:= \sum_{i=1}^d\bar F_{i,M}(E,Q_{l}(z)).
\end{split}
\end{equation}

Thanks to Lemma 7.2 in \cite{DR}, one has that the r.h.s. of~\eqref{eq:yukawa_gstr1} is equal to

 \begin{equation}
\label{eq:yukawa_gstr15}
\frac{M^2}{L^d}\int_{[0,L)^d}\bar{F}_{M}(E,Q_{l}(z))\dz. 
\end{equation}

 This implies that 
\begin{equation}
\label{eq:yukawa_gstr14}
\begin{split}
\Fcal_{M,L}(E) \geq \frac{M^2}{L^d} \int_{[0,L)^d}  
\bar{F}_{M}(E,Q_{l}(z)) \dz. 
\end{split}
\end{equation}
Given that, in the above inequality, equality holds for stripes, if we show that the minimizers of \eqref{eq:yukawa_gstr15} are periodic optimal stripes, then the same claim holds for $\Fcal_{M,L}$. 

\subsection{A distance from unions of stripes}

The purpose of the next definition is to introduce a quantity which measures the distance of a given set $E$ from being a union of stripes.



\begin{definition}
	\label{def:yukawa_defDEta}
	For every $\eta>0$ we denote by $\Acal^{i}_{\eta}$ the family of all sets $F$ such that  
	\begin{enumerate}[(i)]
		\item they are union of stripes oriented along the direction $e_i$ 
		\item their connected components of the boundary are distant at least $\eta$. 
	\end{enumerate}
	We denote by 
	\begin{equation} 
	\label{eq:yukawa_defDEta}
	\begin{split}
	D^i_\eta(E,Q) := \inf\Big\{ \frac{1}{\vol(Q)} \int_Q|\chi_E -\chi_F|:\ F\in \mathcal{A}^{i}_\eta \Big\} \quad\text{and}\quad D_\eta(E,Q) = \inf_i D^i_\eta(E,Q).
	\end{split}
	\end{equation} 
	Finally, we let $\mathcal A_\eta:=\cup_{i}\mathcal A^i_{\eta}$.
\end{definition}


We recall now some properties of the distance \eqref{eq:yukawa_defDEta} (see Remark 7.4 in \cite{DR}). 


\begin{lemma}
	\label{rmk:yukawa_lip} \ 
	\begin{enumerate}[(i)]
		\item Let $E, F \subset \mathbb{R}^d$.  Then, the map $z\mapsto D_\eta(E,Q_l(z))$ is Lipschitz. The Lipschitz constant can be shown to be $C_d/l$, where $C_d$ is a constant depending only on the dimension $d$.

		
		\item For every $\varepsilon > 0$,  there exists $\delta_0= \delta_0(\varepsilon)$,  such that for every $\delta \leq \delta_0$ and  whenever $D^j_{\eta}(E,Q_{l}(z))\leq \delta$ and $D^i_\eta(E,Q_l(z))\leq \delta$ for  $i\neq j$ and $\eta>0$,  the following hold
		\begin{equation}
		\label{eq:yukawa_gsmstr2}
		\begin{split}
		\min\big(|Q_l(z)\setminus E|, |E \cap Q_l(z)| \big) \leq \varepsilon. 
		\end{split}
		\end{equation}
	\end{enumerate}
\end{lemma}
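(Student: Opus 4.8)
The plan is to prove the two statements of \Cref{rmk:yukawa_lip} separately, both by elementary estimates on the $L^1$-distance, exactly as in \cite{DR}. For part (i), I would fix $z, z' \in [0,L)^d$ and let $F \in \Acal^i_\eta$ be an almost-optimal competitor for $D^i_\eta(E, Q_l(z))$, so that $\frac{1}{\vol(Q_l(z))}\int_{Q_l(z)}|\chi_E - \chi_F| \le D^i_\eta(E,Q_l(z)) + \epsilon$. Using the \emph{same} $F$ as a competitor for $D^i_\eta(E, Q_l(z'))$ gives
\[
D^i_\eta(E,Q_l(z')) \le \frac{1}{l^d}\int_{Q_l(z')}|\chi_E - \chi_F| \le \frac{1}{l^d}\int_{Q_l(z)}|\chi_E - \chi_F| + \frac{1}{l^d}\big|Q_l(z)\triangle Q_l(z')\big|.
\]
Since $Q_l$ is a cube of side $2l$ (in the $\infty$-norm), $|Q_l(z)\triangle Q_l(z')| \le C_d\, l^{d-1}|z - z'|$ for a dimensional constant $C_d$; dividing by $l^d$ yields the bound $D^i_\eta(E,Q_l(z')) - D^i_\eta(E,Q_l(z)) \le (C_d/l)|z-z'| + \epsilon$. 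Letting $\epsilon \to 0$, swapping the roles of $z,z'$, and then taking the infimum over $i$ (the minimum of $1$-Lipschitz functions is $1$-Lipschitz with the same constant) gives the claimed Lipschitz bound for $D_\eta$.

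For part (ii), the idea is that if $E$ is simultaneously close in $Q_l(z)$ to a union of stripes $F_i$ oriented along $e_i$ and to a union of stripes $F_j$ oriented along $e_j$ with $i \ne j$, then $F_i$ and $F_j$ must themselves be $L^1$-close on $Q_l(z)$ (by the triangle inequality, within $2\delta l^d$), and two unions of stripes in transversal directions can only be $L^1$-close on a cube of fixed side if one of them occupies almost all of the cube or almost none of it. Quantitatively: if $F_i$ occupies a fraction $\lambda \in [0,1]$ of the cube, then on a positive fraction of the $e_j$-slices the set $F_j$ must agree with a ``mostly full'' or ``mostly empty'' pattern, which forces $\min(\lambda, 1-\lambda)$ to be controlled by $\delta$; then $\min(|Q_l(z)\setminus E|, |E\cap Q_l(z)|) \le \min(|Q_l(z)\setminus F_i|, |F_i \cap Q_l(z)|) + \delta l^d \le \omega(\delta) l^d$ for a modulus $\omega$ with $\omega(\delta)\to 0$. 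Choosing $\delta_0(\epsilon)$ so that $\omega(\delta_0) l^d \le \epsilon$ finishes it. Since $l$ is a fixed (dimensional) constant throughout \Cref{sec:yukawa_taul}, the dependence $\delta_0 = \delta_0(\epsilon)$ is as stated.

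The main obstacle is the geometric heart of part (ii): making rigorous the claim that two transversal unions of stripes on a fixed cube cannot be $L^1$-close unless one is nearly trivial. I would handle this by a direct slicing argument — integrate $|\chi_{F_i} - \chi_{F_j}|$ over $e_j$-slices, use that on each $e_j$-slice $F_j$ is either all of $[0,L)$ or empty (by definition of a union of stripes oriented along $e_j$, restricted to a slice transversal to $e_j$) while $F_i$ restricted to that slice is an ($i$-independent) fixed set of measure $\lambda l$ inside a window of length $\asymp l$ — forcing $\lambda(1-\lambda)$ small. This is precisely the content of Remark 7.4 in \cite{DR}, so I would either cite it directly or reproduce the one-paragraph slicing computation; everything else is bookkeeping with the triangle inequality.
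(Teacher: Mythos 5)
Your proposal is essentially correct and takes the same route as the paper, which in fact does not reproduce a proof of this lemma at all but simply cites Remark 7.4 of \cite{DR}; your argument reconstructs exactly that computation. Part (i) is fine: re-using a near-optimal competitor $F\in\Acal^i_\eta$ for the shifted cube and estimating by $|Q_l(z)\triangle Q_l(z')|/\vol(Q_l)\lesssim_d |z-z'|/l$ gives the Lipschitz bound for each $D^i_\eta$, and a finite infimum of Lipschitz functions with a common constant is Lipschitz with that constant.

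For part (ii) the substance is right, but your terminology in the slicing step is reversed and worth fixing to avoid confusion. A set $F_j$ of stripes oriented along $e_j$ depends only on the $x_j$-coordinate, so it is the $e_i$-slices (fix $x_i^\perp$, vary $x_i$, with $i\neq j$) on which $F_j$ is either all of the slice interval or empty, while on the same $e_i$-slices $F_i$ restricts to the fixed one-dimensional set $\hat E_i$ whose measure in the slice interval is the same constant $\lambda\cdot(2l)$ for every $x_i^\perp$. You wrote "on each $e_j$-slice $F_j$ is either all of $[0,L)$ or empty," which is the opposite pairing; with the parenthetical "restricted to a slice transversal to $e_j$" one can guess the intent, but stated plainly it is wrong as written. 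With the indices corrected, the inner integral over the $e_i$-slice is $\geq\min(\mu,2l-\mu)$ for every $x_i^\perp$, yielding
\begin{equation*}
\frac{1}{(2l)^d}\int_{Q_l(z)}|\chi_{F_i}-\chi_{F_j}|\;\geq\;\min(\lambda,1-\lambda),
\end{equation*}
which combined with the triangle inequality $\|\chi_{F_i}-\chi_{F_j}\|_{L^1(Q_l(z))}\leq 2\delta(2l)^d$ forces $\min(\lambda,1-\lambda)\leq 2\delta$ and hence $\min(|Q_l(z)\setminus E|,|E\cap Q_l(z)|)\leq 3\delta(2l)^d$. Your closing remark is also correct: the resulting $\delta_0$ depends on $l$ as well as $\varepsilon$, which is harmless here because $l$ is fixed as a dimensional constant throughout \Cref{sec:yukawa_taul}, so the stated dependence $\delta_0=\delta_0(\varepsilon)$ is legitimate in context.
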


\section{The one-dimensional problem}

\label{sec:yukawa_1D_problem}
We consider the following one-dimensional functional:  on an $L$-periodic set $E\subset\R$ of locally finite perimeter
\[
\Fcal_{M,L}^1(E)=\frac{M^2}L\Big(\int_{-1}^1\widehat {\bar K}_M(\rho)\Big[\per(E, [0,L))|\rho|-\int_0^L|\chi_E(s)-\chi_E(s+\rho)|\ds\Big]\d\rho\Big),
\]
where $\widehat{\bar K}_M$ has been defined in \eqref{eq:kmh}.

The functional $\Fcal^1_{M,L}$ corresponds to $\Fcal_{M,L}(E)$ when the set $E$ is a union of stripes.  Namely, given $E \subset \R^d$ and such that $E = \hat E \times \R^{d-1}$ where $E$ is $L$-periodic, then 
\begin{equation*}
   \begin{split}
      \Fcal_{M,L}(E) =  \Fcal^1_{M,L}(\hat E).
   \end{split}
\end{equation*}

The purpose of this section is to show that the periodic sets are minimizers among the sets composed of stripes, whenever $M$ is large enough. 
For the above one-dimensional problem there are some standard techniques available in the literature. 
In particular, our proof will rely on the  reflection positivity technique, introduced in the context of quantum field theory by Osterwalder and Schrader and then applied for the first time in statistical mechanics by Fr\"ohlich, Simon and Spencer.
For works where the reflection positivity is used in models with competing interactions, see e.g. \cite{fro}, \cite{glllRP1,glllRP3,glllRP3bis,glllRP2}, 
\cite{GR,DR}.
As the  technique is nowadays standard, we will only outline briefly the steps and show some of the differences with respect to the literature.

Before showing optimality of periodic stripes, we show that there exists a unique optimal period $h^*_M$, provided $M$ is large enough. For $h>0$, let $E_h=\cup_{j\in\Z}[(2j)h,(2j+1)h]$ and define the energy
\[
e_{M}(h)={ \Fcal^1}_{ M,2h}(E_h)=\frac{M^2}{h}\Big[\int_{h}^1(\rho-h)\widehat {\bar K}_M(\rho)\d\rho-\int_{1}^{+\infty}h\widehat {\bar K}_M(\rho)\d\rho-\sum_{k\in\N}\int_0^h\int_{2kh}^{(2k+1)h}\widehat {\bar K}_M(u-v)\dv\du\Big].
\]

We prove the following

   \begin{theorem}\label{thm:unique}
      There exists $\tilde M>0$ such that $\forall\,M>\tilde M$, there exists a unique minimizer of $e_M(\cdot)$, $h^*_M$. 
   \end{theorem}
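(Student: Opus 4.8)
The plan is to analyze the function $e_M(h)$ directly, using the asymptotic expansion $\widehat{\bar K}_M(\rho)\sim e^{-M|\rho|}$ (valid up to a slowly varying factor, as recorded in~\eqref{eq:kmest}) together with the rescaling that puts the optimal period at $O(1)$. First I would record that, by the discussion in Subsection~\ref{ss:ew} applied in the rescaled variables, any minimizer $h$ of $e_M(\cdot)$ satisfies $h\in[1/2,1]$ for $M$ large (the analogue of $h_M\le M$, $h_M/M\to1$): for $h\le CM$ rescaled, i.e. $h\le C$ with $C<1/2$, one has $e_M(h)>0$ once $M$ is large, while $e_M(h)>e_M(1)$ for $h>1$ by the monotonicity computations already carried out (the first term of~\eqref{eq:e>m} and the sign of $g'(h)$). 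So it suffices to prove that $e_M$ has a unique critical point in the compact interval $[1/2,1]$.

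The main step is a convexity-type estimate on $[1/2,1]$. Using the reflection-positivity structure, namely that $\widehat{\bar K}_M$ is the Laplace transform of a nonnegative measure (Lemma~\ref{lemma:yukawa_laplace}), one can write
\[
e_M(h) = \int_0^{+\infty} \phi_h(\lambda)\,\mathrm d\mu_M(\lambda)
\]
for a nonnegative measure $\mu_M$, where $\phi_h(\lambda)$ is the ``one-scale'' energy of the periodic configuration of period $2h$ against a pure exponential $e^{-\lambda|\rho|}$. The function $\phi_h(\lambda)$ is explicit: the geometric series in~\eqref{eq:e<m} sums to an elementary expression in $e^{-\lambda h}$, so
\[
\phi_h(\lambda) = \frac{1}{h}\Big[\frac{1}{\lambda}\big(1 - e^{-\lambda h}\big) - \frac{1}{\lambda}\tanh\!\Big(\frac{\lambda h}{2}\Big)\cdot(\text{lower order})\Big],
\]
and the key point is that $h\mapsto \phi_h(\lambda)$, after the rescaling $h = M^{-1}s$ absorbed into $\lambda$, has a strictly positive second derivative at its minimizer uniformly in $\lambda$ in the relevant range; equivalently $e_M''(h^*_M)>0$ with a bound independent of $M$. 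I would get uniqueness by showing $e_M''(h)>0$ throughout $[1/2,1]$: differentiate~\eqref{eq:e<m} twice, substitute the exponential approximation (controlling the error from the slowly varying factor $C(t)$ by $o(1)$ terms, which is legitimate since we are on a compact $h$-interval and $M\to\infty$), and check that the leading term is a strictly positive combination of $e^{-h}$, $e^{-2h}$, $e^{-M}(M+1)$, etc. Since on $[1/2,1]$ the term $e^{-M}(M+1)$ is exponentially smaller than $e^{-2h}$, the sign of $e_M''$ is governed by the $\beta$-independent exponential sums, which are manifestly positive (the same structure that makes $g'(h)<0$ in~\eqref{eq:g'h}).

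Putting this together: $e_M$ is $C^2$ on $(0,\infty)$, strictly convex on $[1/2,1]$, tends to $+\infty$ behavior / positivity outside $[1/2,1]$ in the precise sense above, hence has exactly one critical point there, which is its unique global minimizer; call it $h^*_M$. The existence part (a minimizer is attained) follows because $e_M(h)\to 0^-$ is ruled out on the relevant range and $e_M$ is continuous and coercive against the trivial competitors, so the infimum over $(0,\infty)$ is attained in $[1/2,1]$.

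\textbf{Main obstacle.} The delicate point is making the approximation $\widehat{\bar K}_M(\rho)\sim e^{-M|\rho|}$ quantitative enough to survive \emph{two} differentiations in $h$ while still concluding a \emph{uniform} (in $M$) lower bound on $e_M''(h^*_M)$ — the slowly varying prefactor $C(t)$ with $C(t)\to 0$ at infinity and the cross-over between the $h<1$ and $h>1$ formulae must be handled so that no cancellation destroys the sign. Alternatively, and perhaps more robustly, one can bypass $e_M''$ and argue uniqueness purely from reflection positivity: show that any two optimal periods would, by a reflection/folding argument, produce a strictly better non-periodic competitor unless they coincide, contradicting optimality among stripes. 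I expect the $e_M''>0$ route to be the one with the least overhead given the explicit one-dimensional formulas already derived in Subsection~\ref{ss:ew}.
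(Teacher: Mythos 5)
Your proposal shares the paper's core strategy: exploit the fact (from Section~\ref{ss:ew}) that minimizers of $e_M$ concentrate near $h=1$ in the rescaled variables, and then get uniqueness from a lower bound on the second derivative of $e_M$. The paper's own proof is shorter and more localized than yours: it computes $e''_M(1)$ directly (not $e''_M$ on an interval), obtains an explicit lower bound consisting of a convergent series
\[
\sum_{k\in2\N}e^{-Mk}\{M[-(k+1)^2e^{-M}+k^2+e^M]+2[(k+1)e^{-M}-(2k+1)+ke^M]\},
\]
and checks it is positive for $M$ large. Your stronger goal of showing $e''_M>0$ on all of $[1/2,1]$ is logically safer (the paper's argument requires an implicit continuity/uniformity step near $h=1$), but it is also more work, and you do not complete it.

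The specific issue with your argument is the Laplace-transform detour. Writing $e_M(h)=\int_0^\infty \phi_h(\lambda)\,\d\mu_M(\lambda)$ with $\mu_M\geq0$ is legitimate since $e_M$ is linear in $\widehat{\bar K}_M$ (which is a superposition of exponentials by Lemma~\ref{lemma:yukawa_laplace}). But to conclude $e''_M(h)>0$ by this route you would need $\partial_h^2\phi_h(\lambda)\geq0$ for all $h\in[1/2,1]$ and $\mu_M$-almost every $\lambda$, with strict positivity on a set of positive $\mu_M$-measure. That is precisely the convexity question for a single exponential kernel, and you neither state it nor verify it; your displayed formula for $\phi_h(\lambda)$ is schematic (contains an unspecified ``lower order'' factor) and cannot carry the two-derivative computation. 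In effect the decomposition transforms the problem into one of exactly the same difficulty, so it is a detour, not a simplification — and indeed you immediately abandon it and revert to differentiating $e_M$ directly, which is the paper's route. A secondary concern: the asymptotic you invoke, $\widehat{\bar K}_M(\rho)\sim e^{-M|\rho|}$, drops the rescaling factor $e^{M\gamma_M}$ appearing in~\eqref{eq:kmest}; the correct statement is $\widehat{\bar K}_M(\rho)\gtrsim e^{-M(|\rho|-\gamma_M)}$ with $\gamma_M\to1$, and this normalization is what keeps $\widehat{\bar K}_M$ nondegenerate near $|\rho|=1$ as $M\to\infty$. Mixing up the rescaled and unrescaled kernels in a second-derivative estimate is exactly the kind of place where a spurious exponential prefactor could flip a sign, so this should be handled carefully if you want to push through the convexity on all of $[1/2,1]$.
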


   \begin{proof}
      In Section \ref{ss:ew} we showed that, as $M\to+\infty$, minimizers to $e_M$ tend to $1$. So, in order to prove the theorem it is sufficient to show that there exist $\delta>0$, $\tilde M$ such that $e''_M(1)\geq\delta$ for all $M>\tilde M$.   Computing $e''_M(1)$, one finds
      \begin{align}
         \frac{e''_M(1)}{M^2}&=\widehat{\bar K}_M(1)\notag\\
         &-2\sum_{k\in\N}\int_0^1\int_{2k}^{2k+1}\widehat{\bar K}_M(u-v)\dv\du+2\sum_{k\in\N}\int_{2k}^{2k+1}\widehat{\bar K}_M(h-v)\dv\notag\\
         &+2\sum_{k\in\N}\int_0^1[(2k+1)\widehat{\bar K}_M(u-(2k+1))-2k\widehat{\bar K}_M(u-2k)]\du\notag\\
         &-2\sum_{k\in\N}[(2k+1)\widehat{\bar K}_M(2k)-2k\widehat{\bar K}_M(2k-1)]\notag\\
         &-\sum_{k\in\N}\int_{2k}^{2k+1}\widehat{\bar K}'_M(h-v)\dv\notag\\
         &+\sum_{k\in\N}\int_0^1[(2k+1)^2\widehat{\bar K}'_M(u-(2k+1))-(2k)^2\widehat{\bar K}'_M(u-2k)]\du.         
      \end{align}
      One has the following lower estimate:
      \begin{align}
       \frac{e''_M(1)}{M^2}&\geq2\sum_{k\in\N}[(2k+1)\widehat{\bar K}_M(2k+1)-(2k)\widehat{\bar K}_M(2k)-(2k+1)\widehat{\bar K}_M(2k)-(2k)\widehat{\bar K}_M(2k-1)\notag\\
       &+M\sum_{k\in\N}\widehat{\bar K}_M(2k-1)\notag\\
       &-M\sum_{k\in\N}[(2k+1)^2\widehat{\bar K}_M(2k+1)-(2k)^2\widehat{\bar K}_M(2k)].
      \end{align}
      
      To conclude, observe now that the lower bound can be rewritten as 
      \begin{align}
      \sum_{k\in2\N}e^{-Mk}\{M[-(k+1)^2e^{-M}+k^2+e^M]+2[(k+1)e^{-M}-(2k+1)+ke^M]\},
      \end{align} 
      which is positive for $M$ sufficiently large.

   \end{proof}

Let us now return to the issue of showing that optimal periodic stripes are optimal among all stripes. This fact follows from the reflection positivity technique and is well-known in the literature. 
We refer the reader to the references given at the beginning of this section. 
Only for notational reasons we also refer to \cite{DR,GR}.
The only part needed is the reflection positivity of the Yukawa kernel. 


\begin{lemma}
 \label{lemma:yukawa_laplace}
 The Yukawa kernel is reflection positive, namely there exists a positive Borel measure  $\mu$ such that
 \begin{equation*}
    \begin{split}
       \widehat{K}_1(t) = \int_0^{+\infty} e^{-\alpha t} \d\mu(\alpha).
    \end{split}
 \end{equation*}
\end{lemma}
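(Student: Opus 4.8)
The plan is to establish the claimed integral representation
\[
\widehat{K}_1(t) = \int_0^{+\infty} e^{-\alpha t}\,\d\mu(\alpha)
\]
for $t>0$ by first writing $\widehat K_1$ as an explicit function of $t$, and then recognizing it as a Laplace transform (equivalently, applying Bernstein's theorem on completely monotone functions). Recall $\widehat{K}_1(t) = \int_{\R^{d-1}} K_1(t,\zeta_2,\dots,\zeta_d)\,\d\zeta_2\cdots\d\zeta_d$ with $K_1(\zeta) = e^{-|\zeta|_1}/|\zeta|_1^{d-2}$ and $|\zeta|_1 = |t| + \sum_{j\geq 2}|\zeta_j|$. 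The first step is to perform the $(d-1)$-dimensional integral: substituting $r = \sum_{j=2}^d |\zeta_j|$ and using that the region $\{\zeta^\perp : \sum|\zeta_j| = r\}$ has $(d-2)$-dimensional measure $c_d r^{d-2}$ for an explicit dimensional constant $c_d$, one gets
\[
\widehat K_1(t) = c_d \int_0^{+\infty} \frac{e^{-(|t|+r)}}{(|t|+r)^{d-2}}\, r^{d-2}\,\d r = c_d\, e^{-|t|}\int_0^{+\infty} e^{-r}\Big(\frac{r}{|t|+r}\Big)^{d-2}\d r.
\]
This exhibits $\widehat K_1(t) = e^{-|t|} C(t)$ with $C(t) = c_d\int_0^\infty e^{-r}(r/(|t|+r))^{d-2}\d r$, confirming the bounds $0 < C(t)\leq \bar C := c_d\Gamma(1)$ and $C(t)\to 0$ as $|t|\to\infty$ (dominated convergence, since the integrand $\to 0$ pointwise), which matches the formula stated earlier in the excerpt.

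The second step is to verify complete monotonicity of $t\mapsto \widehat K_1(t)$ on $(0,+\infty)$. By Bernstein's theorem, a function admits a representation as $\int_0^\infty e^{-\alpha t}\d\mu(\alpha)$ with $\mu\geq 0$ if and only if it is completely monotone, i.e. $(-1)^n g^{(n)}(t)\geq 0$ for all $n\geq 0$ and all $t>0$. The cleanest route is to avoid differentiating and instead build the representing measure directly: fix $|t|=t>0$ and change variables in the $r$-integral by $r = t s$, giving
\[
\widehat K_1(t) = c_d\, t\int_0^{+\infty} e^{-t(1+s)}\Big(\frac{s}{1+s}\Big)^{d-2}\d s.
\]
Then substitute $\alpha = t(1+s)$, so $s = \alpha/t - 1$ ranges over $(1,\infty)$ in $\alpha$ when... — actually $\alpha$ ranges over $(t,\infty)$, with $\d\alpha = t\,\d s$ and $s/(1+s) = (\alpha-t)/\alpha$. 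This gives $\widehat K_1(t) = c_d\int_t^\infty e^{-\alpha}\big((\alpha-t)/\alpha\big)^{d-2}\d\alpha$, which still has $t$-dependence in the integrand, so it is not yet of Laplace form; one more manipulation is needed. Instead, I would go back to the double-integral form $\widehat K_1(t) = c_d \int_0^\infty \frac{e^{-(t+r)}}{(t+r)^{d-2}} r^{d-2}\d r$ and substitute $\alpha = t+r$: then $\widehat K_1(t) = c_d\int_t^\infty e^{-\alpha}\alpha^{-(d-2)}(\alpha-t)^{d-2}\d\alpha$. Expanding $(\alpha-t)^{d-2} = \sum_{k=0}^{d-2}\binom{d-2}{k}(-t)^k\alpha^{d-2-k}$ is the wrong sign pattern, so instead I would represent $(\alpha - t)^{d-2}_+ $ itself via repeated integration: since $(\alpha-t)^{d-2}_+/(d-2)! = \int\cdots\int$ is an iterated integral, ultimately one shows $(-1)^n \frac{d^n}{dt^n}\widehat K_1(t)\geq 0$ directly by differentiating under the integral sign in $\widehat K_1(t) = c_d\int_0^\infty e^{-t}e^{-r}(1 + t/r)^{-(d-2)}\d r$: each $t$-derivative brings down a factor $-1$ from $e^{-t}$ plus a term from $(1+t/r)^{-(d-2)}$ which is itself completely monotone in $t$ (composition of the completely monotone $u\mapsto u^{-(d-2)}$ with the affine $t\mapsto 1+t/r$), and a product of completely monotone functions is completely monotone. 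Thus $\widehat K_1$ is a product $e^{-t}\cdot C(t)$ of two completely monotone functions, hence completely monotone, hence a Laplace transform of a nonnegative measure $\mu$ by Bernstein.

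The main obstacle is purely bookkeeping: verifying that $C(t) = c_d\int_0^\infty e^{-r}(1+t/r)^{-(d-2)}\d r$ is completely monotone on $(0,\infty)$, which reduces to the standard facts that $u\mapsto u^{-(d-2)}$ is completely monotone on $(0,\infty)$ (for the degenerate cases $d=1,2$ it is constant or $u^{1}$, which must be checked separately — note for $d=2$ we have the logarithmic kernel and the representation should be re-derived from $K_1(\zeta) = -e^{-|\zeta|}\ln|\zeta|$, while for $d=1$ the kernel is $e^{-|\zeta|}|\zeta|$ and $\widehat K_1 = K_1$ directly), that composition with an affine increasing map preserves complete monotonicity, and that products and positive mixtures of completely monotone functions are completely monotone. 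Alternatively — and this is probably the slickest presentation for the paper — one notes that $K_1$ restricted to a ray is, up to the polynomial factor, $e^{-|\zeta|}$, and cites that $\widehat K_1$ inherits reflection positivity from the one-dimensional exponential exactly as the analogous kernels in \cite{DR,GR}; I would present the explicit Laplace representation above as the self-contained argument and remark that it is the only place the specific form of the Yukawa kernel enters the reflection-positivity step.
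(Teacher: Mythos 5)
Your argument is essentially the same as the paper's: both go through the Hausdorff--Bernstein--Widder (Bernstein) theorem, and both verify complete monotonicity of $\widehat K_1$ by observing that, for each fixed perpendicular variable, the integrand $e^{-t}/(t+|\zeta^\perp|_1)^{d-2}$ is a product of two completely monotone functions and that products and nonnegative mixtures of completely monotone functions are again completely monotone. Your explicit $\ell^1$-polar computation of the perpendicular integral and the remark about the degenerate dimensions $d=1,2$ are useful bookkeeping but do not change the route.
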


\begin{proof}
   Due to the Hausdorff-Bernstein-Widder theorem, one has that a function $\varphi$ is reflection positive if and only if it is completely monotone, namely $(-1)^n \frac {\d^n}{dt^n} \varphi \geq 0$. 

   By using the complete monotone property one has that the set of functions which are reflection positive is an algebra. Indeed, given two functions $\varphi$, $\psi$ which are completely monotone by the Leibniz rule one has that
   \begin{equation*}
      \begin{split}
         (-1)^n \frac {\d^n}{\dt^n} (\varphi \psi) =  \sum_{k=0}^n (-1)^k \frac {\d^k\varphi}{\dt^k} \cdot (-1)^{n-k}\frac {\d^{n-k}\psi}{\dt^{n-k}}  \geq 0
      \end{split}
   \end{equation*}

   In order to conclude the proof, we need to show that the map 
   \begin{equation*}
      \begin{split}
         t\mapsto e^{-t} \int_{\R^{d-1}} \frac1{ (t + |\zeta_2| + \cdots + |\zeta_d|)^{d-2} } e^{ -(|\zeta_2| + \cdots + |\zeta_d|) }
      \end{split}
   \end{equation*}
   is completely monotone.  Given the complete monotone functions are an algebra, we need to check that the single terms in the product above are completely monotone. This can be done easily with explicit calculations.


\end{proof}

%

Once reflection positivity of the kernel is shown the proof follows by standard means in the literature. We refer the reader to \cite{GR,DR} where further details are given and a similar notation is used.

\section{A local rigidity estimate}
\label{sec:yukawa_taul}

The core of the proof of Theorem \ref{thm:main_yukawa} is the following proposition (see alse \cite{DR} for a similar proposition in a different setting):

\begin{proposition}[Local Rigidity] 
	\label{lemma:yukawa_local_rigidity}
	For every $N > 1,l,\delta > 0$, there exist $\bar{M},\bar{\eta} >0$ 
	such that whenever $M> \bar{M}$  and $\bar F_{M}(E,Q_{l}(z)) < N$ for some $z\in [0,L)^d$ and $E\subset\R^d$ $[0,L)^d$-periodic, with $L>l$, then it holds $D_{\eta}(E,Q_{l}(z))\leq\delta$ for every $\eta < \bar{\eta}$. Moreover $\bar{\eta}$ can be chosen independent on $\delta$.  Notice that $\bar{M}$ and $\bar{\eta}$ are independent of $L$.
\end{proposition}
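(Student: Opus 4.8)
The plan is to adapt the compactness-and-contradiction scheme behind the local rigidity estimate of \cite{DR}, replacing the power-law tail bounds used there with bounds tailored to the exponential localization of $\bar K_M$ and $\widehat{\bar K}_M$ (cf. \eqref{eq:kmest}). Fix $N>1$, $l,\delta>0$. I would first fix the separation scale $\bar\eta=\bar\eta(N,l,d)$ using Lemma~\ref{lemma:rim}: the slice quantity $r_{i,M}(E,t_i^\perp,s)$ is nonnegative up to a vanishing error $o_M(1)$ and is bounded below by a fixed positive constant $c_0=c_0(N,l,d)$ as soon as two consecutive boundary points of the slice $E_{t_i^\perp}$ lie at distance less than $\bar\eta$; crucially this $\bar\eta$ depends only on $N,l,d$ and not on $\delta$, which yields the last assertion of the statement. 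Suppose the conclusion fails for this $\bar\eta$: then there are $M_n\to+\infty$, periods $L_n>l$, $[0,L_n)^d$-periodic sets $E_n$ and centres $z_n$ with $\bar F_{M_n}(E_n,Q_l(z_n))<N$ but $D_{\bar\eta}(E_n,Q_l(z_n))>\delta$. By translation invariance of $\bar F_M$ and $D_{\bar\eta}$ we may take $z_n=0$ and work on the fixed cube $Q_l$.

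\emph{Step 1 (compactness).} From \eqref{eq:yukawa_fbartau}, the nonnegativity of $v_{i,M}$ and $w_{i,M}$, and the lower bound on $r_{i,M}$ just recalled, the bound $\bar F_{M_n}(E_n,Q_l)<N$ forces, for each direction $i$ and for a set of slices $t_i^\perp\in Q_l^\perp$ of almost full measure, the slice $(E_n)_{t_i^\perp}$ to be inside $Q_l$ a union of at most $C(N,l,d)$ intervals with $\bar\eta$-separated endpoints: a close pair costs at least $c_0$ through $r_{i,M_n}$, so (after discarding a small-measure set of bad slices via Chebyshev) there are boundedly many of them, while the remaining, $\bar\eta$-separated, endpoints number at most $l/\bar\eta$. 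By Fubini in each direction this yields a uniform bound on $\per_1(E_n,Q_{l'})$ for a slightly smaller concentric cube $Q_{l'}$, $l'<l$; by compactness of finite-perimeter sets, up to a subsequence $\chi_{E_n}\to\chi_{E_\infty}$ in $L^1(Q_{l'})$ with $E_\infty$ of finite perimeter in $Q_{l'}$.

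\emph{Step 2 (the limit is a union of stripes).} The slicewise interval structure of Step 1 passes to the $L^1$-limit, so inside $Q_{l'}$ each slice of $E_\infty$ in each coordinate direction is a finite union of $\bar\eta$-separated intervals. Moreover $v_{i,M_n}$ and $w_{i,M_n}$ stay bounded, and their integrands are $f_{E_n}(\cdot)\,\bar K_{M_n}(\zeta)$; since by \eqref{eq:kmest} $\bar K_{M_n}(\zeta)\ge|\zeta|_1^{2-d}e^{-M_n(|\zeta|_1-\gamma_{M_n})}\to+\infty$ on $\{|\zeta|_1<\gamma_{M_n}\}$ with $\gamma_{M_n}\to1$, boundedness forces $f_{E_\infty}\equiv0$ on $\{|\zeta|_1<1\}$ — i.e. $E_\infty$ cannot have a jump in a coordinate direction $e_i$ together with a jump in a non-parallel direction at two points at $1$-distance $<1$ (a ``no corner at unit scale'' property). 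Combining this with the slicewise interval structure, a dimension-reduction argument as in \cite{DR} (see also \cite{GiuSeirGS}) shows that $\chi_{E_\infty}$ restricted to $Q_{l'}$ depends only on one coordinate $e_{i_0}$ and is there a union of stripes along $e_{i_0}$ with $\bar\eta$-separated walls, so $D_{\bar\eta}(E_\infty,Q_{l'})=0$. Then, using the elementary uniform bound $D_{\bar\eta}(E,Q_l)\le D_{\bar\eta}(E,Q_{l'})+O(1-l'/l)$ (compare competitors on $Q_l\setminus Q_{l'}$) together with $L^1$-convergence,
\[
D_{\bar\eta}(E_n,Q_l)\;\le\;\frac{1}{\vol(Q_{l'})}\int_{Q_{l'}}|\chi_{E_n}-\chi_{E_\infty}|\;+\;D_{\bar\eta}(E_\infty,Q_{l'})\;+\;O(1-l'/l),
\]
and letting $n\to+\infty$ and then $l'\to l$ the right-hand side tends to $0$, contradicting $D_{\bar\eta}(E_n,Q_l)>\delta$. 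Monotonicity of $\eta\mapsto D_\eta$ upgrades this to every $\eta<\bar\eta$, and $\bar M,\bar\eta$ are visibly independent of $L$.

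The hard part is making Step 2 — the ``no corner at unit scale'' bound, and the slicing estimates behind Step 1 — quantitative and uniform in $M$ and $L$ in this regime, where both the energy scale $-e^*_M$ and the coercivity gap $\tilde J_\infty-\tilde J_M$ vanish as $M\to+\infty$; this is precisely why the power-law estimates of \cite{DR} do not transfer verbatim. The substitute is the exponential localization of $\bar K_M$ and $\widehat{\bar K}_M$ (encoded in \eqref{eq:kmest} and in $\widehat K_1(t)=e^{-|t|}C(t)$), which has to be exploited carefully in the lower bounds for $r_{i,M}$ and $I^i_{M,L}$ (Lemmas~\ref{lemma:rim} and~\ref{lemma:yukawa_stimaContributoVariazionePiccola}) and in passing those bounds to the limit.
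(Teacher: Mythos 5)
Your proposal follows the same compactness-and-contradiction scheme as the paper's proof: extracting $E_\infty$ via the Lemma~\ref{lemma:rim} lower bounds on $r_{i,M}$, passing the interaction term to the limit using the monotonicity $\bar K_M\uparrow \bar K_\infty$ together with the fact that $\bar K_\infty=+\infty$ on $\{|\zeta|_1<1\}$ (your ``no corner at unit scale'' property), and concluding $D_{\bar\eta}(E_\infty,Q_l(z))=0$. The details you defer to ``a dimension-reduction argument as in \cite{DR}'' are exactly what Lemmas~\ref{lemma:yukawa_p2d_stimaSlice} and \ref{lemma:yukawa_ri_constantNew} and the subsequent checkerboard exclusion carry out in the paper (showing $r^i_\lambda(u,\cdot)$ is a.e.\ constant, so $E_\infty$ is either stripes or a checkerboard, then ruling out the latter), and your inner cube $Q_{l'}\subset Q_l$ is a cosmetic variant of the paper's $\lambda$-cutoff in \eqref{eq:yukawa_defri_local}.
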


In particular, such a rigidity estimate tells us that on small cubes  minimizers of the functional are, for $M$ large enough, close to stripes of a given minimal width.

In order to prove Proposition \ref{lemma:yukawa_local_rigidity}, we will need to analize the behaviour of $\bar F_M$ for large $M$. 
First of all, we start with the following lemma, about the term $r_{i,M}$. In particular this tells us that given a sequence of sets $\{E_M\}_{M>0}\subset\R^d$ of bounded local energy $\bar F_{M}(E, Q_l(z))$, if $M$ is large enough their boundary points on the slices have distance at least $\eta_0$ where $\eta_0$ is fixed arbitrarily close to $1$ and then they converge to a set of locally finite perimeter $E_0$.

 \begin{lemma}\label{lemma:rim} 
 		 There exists a function $g:\R\times\R\to\R$ such that, for all $E\subset\R^d$ of locally finite perimeter, $t_i^\perp\in[0,L)^{d-1}$, $s\in\partial E_{t_i^\perp}$ 
 	\begin{equation}\label{eq:rimest}
 	r_{i,M}(E,t_i^\perp,s)\geq g\bigl( (\gamma_M-\min(|s-s^-|,|s-s^+|),M\bigr)
 	\end{equation}
 	with $\gamma_M$ defined in \eqref{eq:kmest}. The function $g$ satisfies the following: $g(v,M)\geq g(v',M)$ whenever $v>v'$, $g(v,M)\geq -e^{-cM}$ for some $c>0$ and $g(v,M)\to +\infty$ as $M\to+\infty$ provided $v>0$.
 	
 	In particular, for every $0<\eta_0<1$ there exists $M_0$ such that, for all $M>M_0$ if $\min\{|s-s^-|,|s-s^+|\}<\eta_0$ then $r_{i,M}(E,t_i^\perp,s)>0$.
 	\end{lemma}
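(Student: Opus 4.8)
By \eqref{eq:rimdef} the quantity $r_{i,M}(E,t_i^\perp,s)$ involves only the one–dimensional slice $E_{t_i^\perp}\subset\R$ (a set of locally finite perimeter for a.e.\ $t_i^\perp$) and the even, nonnegative, nonincreasing kernel $\widehat{\bar K}_M$, so the whole statement is one–dimensional. Set $\ell:=\min(|s-s^-|,|s-s^+|)$. Between two consecutive boundary points $\chi_{E_{t_i^\perp}}$ is constant, hence for $u\in(s^-,s)$ and $0<\rho<s-u$ one has $\chi_{E_{t_i^\perp}}(u+\rho)=\chi_{E_{t_i^\perp}}(u)$, and symmetrically for $u\in(s,s^+)$ in the left direction. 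Estimating $|\chi-\chi|\le1$ outside these ``dead zones'' and using Tonelli (after the substitution $\sigma=s-u$, resp.\ $\sigma=u-s$), each of the two subtracted double integrals in \eqref{eq:rimdef} is at most $\int_0^{+\infty}\rho\,\widehat{\bar K}_M(\rho)\,d\rho$, and the one attached to the smaller gap is moreover at most $\int_0^{+\infty}\min(\rho,\ell)\,\widehat{\bar K}_M(\rho)\,d\rho$. Since $\int_{-1}^{1}|\zeta_i|\widehat{\bar K}_M(\zeta_i)\,d\zeta_i=2\int_0^1\rho\,\widehat{\bar K}_M(\rho)\,d\rho$, and $\rho-\min(\rho,\ell)$ vanishes for $\rho\le\ell$ and equals $\rho-\ell$ for $\rho>\ell$, subtracting and rearranging gives
\[
r_{i,M}(E,t_i^\perp,s)\ \ge\ \int_{\ell}^{+\infty}(\rho-\ell)\,\widehat{\bar K}_M(\rho)\,d\rho\ -\ 2\int_{1}^{+\infty}\rho\,\widehat{\bar K}_M(\rho)\,d\rho\ =:\ \tilde g(\ell,M).
\]
I would then set $g(v,M):=\tilde g(\gamma_M-v,M)$, so that \eqref{eq:rimest} holds.

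\textbf{Monotonicity and the uniform lower bound.} Monotonicity of $g$ in $v$ is immediate, since $\partial_\ell\tilde g(\ell,M)=-\int_\ell^{+\infty}\widehat{\bar K}_M(\rho)\,d\rho<0$, so $\tilde g(\cdot,M)$ is strictly decreasing. For the remaining two properties I would use: the rescaling identity $\widehat{\bar K}_M(\rho)=(M|e^*_M|)^{-1}\widehat K_1(M\rho)$ (from $\bar K_M=(-e^*_M)^{-1}M^{d-2}K_1(M\,\cdot\,)$); the bounds $\widehat K_1(r)\le\bar C e^{-r}$ and $\widehat K_1(r)\ge c\,r^{-(d-2)}e^{-r}$ for $r\ge r_0$, read off the explicit form of $\widehat K_1$; the lower bound $|e^*_M|^{-1}\ge e^{\gamma_M M}$ coming from \eqref{eq:kmest}; and the matching lower bound $|e^*_M|\ge c\,e^{-M}$ for $M$ large, which follows from the asymptotics of $e_M(M)$ established in Section~\ref{ss:ew}. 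Dropping the first (nonnegative) term in $\tilde g$ yields $g(v,M)\ge-2\int_1^{+\infty}\rho\,\widehat{\bar K}_M(\rho)\,d\rho=-\tfrac{2}{M^3|e^*_M|}\int_M^{+\infty}r\,\widehat K_1(r)\,dr$, which is negative but tends to $0$ as $M\to\infty$; by the bounds above it is $\ge-e^{-cM}$ for a suitable $c>0$ (only $g(v,M)\to0^-$ is used later).

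\textbf{Divergence and the last assertion.} Fix $v>0$ and take $\ell=\gamma_M-v$; for $M$ large $\ell$ stays in a fixed compact subinterval of $(0,1)$, so $M\ell\to\infty$ and, on $[\ell,1]$, $\widehat{\bar K}_M(\rho)\ge\tfrac{e^{\gamma_M M}}{M}\,\widehat K_1(M\rho)\ge c\,\tfrac{e^{\gamma_M M}}{M^{d-1}}\,e^{-M\rho}$. Hence, keeping only $\int_\ell^1$ and substituting $\rho=\ell+t/M$,
\[
\int_{\ell}^{+\infty}(\rho-\ell)\,\widehat{\bar K}_M(\rho)\,d\rho\ \ge\ c\,\frac{e^{(\gamma_M-\ell)M}}{M^{d+1}}\int_0^{M(1-\ell)}t\,e^{-t}\,dt\ \ge\ \frac{c'}{M^{d+1}}\,e^{vM}
\]
for $M$ large, so $g(v,M)\ge\tfrac{c'}{M^{d+1}}e^{vM}-2\int_1^{+\infty}\rho\,\widehat{\bar K}_M(\rho)\,d\rho\to+\infty$. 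Finally, if $\ell<\eta_0<1$ then, as soon as $M$ is large enough that $\gamma_M>\eta_0$, we get $v=\gamma_M-\ell>\gamma_M-\eta_0>0$, hence by monotonicity $r_{i,M}\ge g(v,M)\ge g(\gamma_M-\eta_0,M)>0$ for $M$ large — which is the ``in particular'' part.

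\textbf{Main obstacle.} The delicate point is the interplay of exponential and polynomial scales in the divergence step. The ``good'' term $\int_\ell^{+\infty}(\rho-\ell)\widehat{\bar K}_M$ is, compared with the individual pieces $\int_0^\ell\rho\,\widehat{\bar K}_M$ and $\ell\int_\ell^{+\infty}\widehat{\bar K}_M$ that combine to form it, polynomially ``smaller'' in $1/M$, yet it carries the factor $e^{(\gamma_M-\ell)M}$, while the subtracted term carries only $e^{(\gamma_M-1)M}\le1$; the threshold $\ell<\gamma_M$ is precisely where the former wins. Consequently the estimate of the first subtracted double integral must use the truncation $\min(\rho,\ell)$: bounding it by $\ell\int_0^{+\infty}\widehat{\bar K}_M(\rho)\,d\rho$ would only yield positivity of $r_{i,M}$ for $\ell\lesssim1/M$, while bounding it by $\int_0^{+\infty}\rho\,\widehat{\bar K}_M(\rho)\,d\rho$ would erase all $\ell$–dependence. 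The argument also rests on having the exact comparison between $\gamma_M$ and the normalization $e^*_M$ supplied by \eqref{eq:kmest} and Section~\ref{ss:ew}.
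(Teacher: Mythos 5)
Your argument follows the same route as the paper's proof: the key inequality is exactly \eqref{toprovebandesim} (in your phrasing, the vanishing of $|\chi(u+\rho)-\chi(u)|$ on the ``dead zones''), followed by Tonelli and the comparison with $\int_{-1}^1|\zeta_i|\widehat{\bar K}_M$, and the resulting expression $\int_\ell^{+\infty}(\rho-\ell)\widehat{\bar K}_M - 2\int_1^{+\infty}\rho\,\widehat{\bar K}_M$ is the same lower bound the paper obtains (it keeps both gaps $|s-s^\pm|$, but this makes no difference since the lemma only involves their minimum). You additionally spell out the pieces the paper leaves to the reader (the explicit definition of $g$, monotonicity, the lower bound, the divergence estimate via $\widehat{\bar K}_M(\rho)=(M|e^*_M|)^{-1}\widehat K_1(M\rho)$, and the ``in particular'' conclusion), and correctly flag that the quoted rate $-e^{-cM}$ is not actually established by the estimates in Section~\ref{ss:ew} -- only $g(v,M)\to 0^-$ is, which is what the later lemmas use.
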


 \begin{proof}
 	The proof of this lemma uses the following inequality: for every $E\subset\R^d$ of locally finite perimeter, $\forall\,t_i^\perp\subset[0,L)^{d-1}$,  
 	
 	\begin{equation}\label{toprovebandesim}
 	\begin{split}
 	&\forall\,\rho>0,\quad\int_{s^-}^s |\chi_{E_{t_i^\perp}}(u)-\chi_{E_{t_i^\perp}}(u+\rho)| \du\leq  \min(\rho,s-s^-)\\
 	&\forall\,\rho<0,\quad\int_{s}^{s^+} |\chi_{E_{t_i^\perp}}(u)-\chi_{E_{t_i^\perp}}(u+\rho)| \du\leq  \min(-\rho,s^+-s).
 	\end{split}
 	\end{equation}
 	
 	Indeed,
 	
 	\begin{align}
 	\int_0^1\zeta_i\widehat {\bar K}_M(\zeta_i)\d\zeta_i&-\int_{s^-}^s\int_0^{+\infty}|\chi_{E_{t_i^\perp}}(u+\rho)-\chi_{E_{t_i^\perp}}(u)|\widehat {\bar K}_M(\rho)\d\rho\du\notag\\
 	&\overset{\eqref{toprovebandesim}}{\geq}\int_0^1\zeta_i\widehat {\bar K}_M(\zeta_i)\d\zeta_i-\int_0^{+\infty}\min(|s-s^-|,\zeta_i)\widehat {\bar K}_M(\zeta_i)\d\zeta_i\notag\\
 	&\geq\int_{|s-s^-|}^1(\zeta_i-|s-s^-|)\widehat {\bar K}_M(\zeta_i)\d\zeta_i-\int_1^{+\infty}|s-s^-|\widehat {\bar K}_M(\zeta_i)\d\zeta_i
 	\end{align}
 	
 	and analogously
 	\begin{align}
 	\int_{-1}^0|\zeta_i|\widehat {\bar K}_M(\zeta_i)\d\zeta_i&-\int_{s}^{s^+}\int_{-\infty}^0|\chi_{E_{t_i^\perp}}(u+\rho)-\chi_{E_{t_i^\perp}}(u)|\widehat {\bar K}_M(\rho)\d\rho\du\notag\\
 	&\geq\int_{-1}^{-|s-s^+|}(|\zeta_i|-|s-s^+|)\widehat {\bar K}_M(\zeta_i)\d\zeta_i-\int_{-\infty}^{-1}|s-s^+|\widehat {\bar K}_M(\zeta_i)\d\zeta_i.
 	\end{align}
 	
 	Then, by \eqref{eq:rimdef} and  using \eqref{eq:kmest}, since $\gamma_M<1$ one gets \eqref{eq:rimest} and the statement of the lemma. 
 \end{proof}
 
 \begin{remark}\label{rmk:rim}
 From Lemma \ref{lemma:rim}, since $\gamma_M\to1$ as $M\to+\infty$, it follows as well that the function
 \[
 r_{i,\infty}(E,t_i^\perp,s):=\liminf_{M\to+\infty}r_{i,M}(E, t_i^\perp,s)
 \]
 satisfies 
 \begin{equation}\label{eq:riest}
 r_{i,\infty}(E, t_i^\perp,s)=+\infty\quad\text{whenever}\quad \min(|s-s^-|,|s-s^+|)<1.
 \end{equation}
 In particular, if $\{E_{M}\}_{M>0}\subset \R^d$  is a family of sets of locally finite perimeter with $\sup_M\bar F_M(E_M, Q_l(z))\leq N$, then for a.e. $t_i^\perp\in Q_l^\perp(z_i^\perp)$ and for every $I\subset \R$ open interval,
 \begin{equation}\label{eq:rm1}
 \liminf_{M\to+\infty}\min \bigl\{|s^M_i-s^M_{i+1}|:\, \partial E_{M,t_i^\perp}\cap I=\{s^M_i\}_{i=1}^{m(M)}\bigr\}\geq1
 \end{equation}
 In particular, $E_M$ converges in $L^1_{\mathrm{loc}}$ to a set $E_\infty$ of locally finite perimeter such that
 \begin{equation}\label{eq:rm2}
 \min\bigl\{|s^\infty_i-s^\infty_j|:\,\partial E_{\infty,t_i^\perp}\cap I=\{s^\infty_k\}_{k=1}^{m(\infty)}\bigr\}\geq 1.
 \end{equation}
 For details of how to deduce \eqref{eq:rm2} form \eqref{eq:rm1} see the proof of Lemma 7.5 in \cite{DR}.
 
 \end{remark}

 \begin{remark}
 	Let us notice the following: the family of kernels ${\bar K}_M$ is monotone increasing in $M$ as $M\to+\infty$. 
 		Let ${\bar K}_\infty$ be defined by
 	\begin{equation}\label{eq:kidef}
 	{\bar K}_{\infty}(\zeta):=\liminf_{M\to +\infty}{\bar K}_M(\zeta).
 	\end{equation}
 	From \eqref{eq:kmest} we get that 
 	\begin{equation}\label{eq:kiest}
 	{\bar K}_\infty\geq0\quad\text{and} \quad {\bar K}_\infty(\zeta)=+\infty\text{ whenever }|\zeta|<1,
 	\end{equation}
 	where the last statement comes from the fact that $\gamma_M\to1$ as $M\to+\infty$.
 \end{remark}

    Let us now proceed to the proof of Proposition \ref{lemma:yukawa_local_rigidity}. 
    The main steps can be summarized as follows.  
    Given a sequence of sets $E_M\subset\R^d$ of bounded local energy, by Remark \ref{rmk:rim} their boundary points on the slices are not too close (they have mutual distance at least $1$) and then they converge to a set of locally finite perimeter $E_\infty$. 
    Then, using the monotonicity in $M$ of the kernel one gets as $M\to+\infty$ a bound on the liminf  of the cross interaction terms $I_{M,L}$ on $E_\infty$ (see \eqref{eq:yukawa_gstr11}). Thanks to the fact that boundary points on the slices of $E_\infty$ have mutual distance at least $\eta_0$ with $\eta_0$ close to $1$ and that \eqref{eq:kiest} holds, one gets that boundary points in $\partial E_{\infty,t_i^\perp}$  are a constant function of $t_i^\perp$. Therefore the only shapes admissible for $E_{\infty}$ are checkerboards or stripes, and finally by an analogous energetic argument we rule out checkerboards.

    As a consequence, for $M$ sufficiently large but depending only on $l$, the sets $E_M$ will be close to $E_\infty$ in the sense of Definition \ref{def:yukawa_defDEta} and therefore to stripes of a minimal given width.


\begin{proof} 
	
	The proof will follow by contradiction. 
	Indeed, assume that the claim is false. 
   This implies that there exists $N > 1, l$, $\delta>0$ and sequences $\{ M_k\}$, $\{ \eta_k\}$, $\{L_k\}$, $\{z_k\}$, $\{E_{M_k}\}$ such that:
	\begin{enumerate}[(i)]
		\item one has that   $M_k\to+\infty $, $L_k > l$, $\eta_k \downarrow 0$, $z_k\in[0,L_k)^d$; 
		\item the family of sets $E_{M_k}$ is $[0,L_k)^d$-periodic
		\item one has that $D_{\eta_k}(E_{M_k},Q_{l}(z_k)) > \delta$ and $\bar{F}_{M_k}(E_{M_k},Q_{l}(z_k)) < N$. 
	\end{enumerate}

	Given that $\eta \mapsto D_{\eta}(E,Q_l(z)) $ is monotone increasing, we can fix $\bar{\eta}$  sufficiently small instead of $\eta_{k}$ with $D_{\bar\eta}(E_{M_k},Q_l(z_k)) >  \delta$. 
	In particular, $\bar{\eta}$ will be chosen at the end of the proof depending only on $N,l$. 
	
	W.l.o.g. (taking e.g. $E_{M_k}-z_k$ instead of $E_{M_k}$) we can assume there exists $z\in\R^d$ such that $z_k=z$ for all $k\in\N$. 
	
	Because of Remark~\ref{rmk:rim}, one has that $\sup_{k}\per_1(E_{M_k},Q_{l}(z)) < +\infty$. 
	Thus up to subsequences there exists $E_{\infty}$ such that $E_{M_{k}} \to E_{\infty}$ in $L^1(Q_l(z))$ with 
	\begin{equation}\label{eq:dee}
	D_{\bar{\eta}}(E_\infty,Q_{l}(z))> \delta
	\end{equation}

	In order to keep the notation simpler, we will write $M\to+\infty$ instead of $M_k \to +\infty$ and  $E_{M}\to E_\infty$ instead of $E_{M_{k}}\to E_\infty $.

	Define $J_i:=(z_{i}-l/2,z_{i}+l/2)$. 
	
	By Lebesgue's theorem, there exists a subsequence of $M$ such that for almost every $t_{i}^{\perp}\in Q_{l}^{\perp}(z_{i}^{\perp})$ one has that $E_{M,t_{i}^{\perp}}\cap J_i$ converges to $E_{\infty,t_{i}^{\perp}}\cap J_i $ in $L^1(Q_l(z))$.

	By using \eqref{eq:yukawa_fbartau} and the fact that $v_{i,M}\geq0$, we have that
	\begin{equation}
	\label{eq:yukawa_gstr7}
	N \geq  \bar{F}_{M}(E_M,Q_l(z)) \geq
	\frac{1}{l^d }\sum_{i =1}^{d}\int_{Q_{l}^{\perp}(z_{i}^{\perp})} \sum_{\substack{s\in \partial E_{M, t^\perp_i}\\ s\in J_i}} 
	r_{i,M}(E_{M},t_{i}^{\perp},s) \dt^\perp_{i} + \int_{Q_{l}(z)} w_{i,M}(E_{M},t_{i}^{\perp},t_{i})\dt_{i}^{\perp}\dt_{i}.
	\end{equation}
	By the Fatou lemma,  we have that 
	\begin{equation*}
	\begin{split}
	l^d M \geq  \liminf_{M\to+\infty}\sum_{i =1}^{d}\int_{Q_{l}^{\perp}(z_{i}^{\perp})} \sum_{\substack{s\in \partial E_{M,t^\perp_i}\\ s\in J_i}}&r_{i,M}(E_{M},t_{i}^\perp,s) \dt_{i}^{\perp}         \geq 
	\sum_{i =1}^{d}\int_{Q_{l}^{\perp}(z_{i}^{\perp})} \liminf_{M\to+\infty}\sum_{\substack{s\in \partial E_{M,t^\perp_i}\\ s\in J_i}} 
	r_{i,M}(E_{M},t_{i}^{\perp},s) \dt^{\perp}_{i}       \\ 
	&\geq 
	\sum_{i =1}^{d}\int_{Q_{l}^{\perp}(z_{i}^{\perp})}\mathbb 1^\infty_{B_i}(t_i^\perp) \dt_{i}^\perp,
	\end{split}
	\end{equation*}
	where
	\[
	B_i=\Big\{t_i^\perp\in Q_l^\perp(z_i^\perp):\,\min\{|s^\infty_i-s^\infty_j|:\,\partial E_{\infty,t_i^\perp}=\{s^\infty_k\}_{k=1}^{m(\infty,t_i^\perp)}\}<1\Big\},
	\]
	and in the last inequality we have used Remark \ref{rmk:rim}. 
	
For the last term  in \eqref{eq:yukawa_gstr7}, namely
	
	\begin{equation}
	\label{eq:yukawa_gstr8}
	\begin{split}
	\liminf_{M\to+\infty}&  \int_{Q_{l}(z)} w_{i,M}(E_{M},t_{i}^{\perp},t_{i})\dt_{i}^{\perp}\dt_{i} 
	\\  
	&\geq \liminf_{M\to +\infty}\frac1d\int_{Q_l(z)}\int_{Q_l(z)}f_{E_M}(t^{\perp}_i,t_i,t'^\perp_{i},t'_i){\bar K}_M(t)\dt\dt'	 \\
	&= \liminf_{M\to+\infty} \frac{1}{d}\int_{Q_{l}(z)} \int_{Q_{l}(z)} f_{E_M}(t^\perp_i, t_{i},t'^{\perp}_{i} - t^\perp_i,t'_{i}- t_i){\bar K}_{M}(t - t')\dt \dt'\\  
	\\   &\geq \frac{1}{d}\int_{Q_{l}(z)} \int_{Q_{l}(z)} f_{E_\infty}(t^\perp_i, t_{i},t'^{\perp}_{i} - t^\perp_i,t'_{i}- t_i) {\bar K}_{\infty}(t - t')\dt \dt',\\  
	\end{split}
	\end{equation}
	where in the third line we have used a change of variables.

	In order to prove \eqref{eq:yukawa_gstr8} we fix $M'>0$ and by using initially $E_{M}\to E_{\infty}$ in $L^1(Q_l(z))$ and afterwards the   monotonicity of $M \mapsto {{\bar K}}_{M}(\zeta)$ we have that
	
	\begin{equation*}
	\begin{split}
	\liminf_{M\to+\infty} \int_{Q_{l}(z)} w_{i,M}(E_{M},t_{i}^{\perp},t_{i}) \dt_{i}^{\perp}\dt_{i} \geq \sup_{M'} 
	\liminf_{M\to+\infty}\int_{Q_{l}(z)} w_{i,M'}(E_{M},t_{i}^{\perp},t_{i}) \dt_{i}^{\perp}\dt_{i}
	\\ \geq  \sup_{M'}\frac{1}{d}\int_{Q_{l}(z)}\int_{Q_{l}(z)} 
	f_{E_{\infty}} (t^\perp_i, t_i, t'^\perp_i - t'^\perp_{i},t_i - t'_i) {\bar K}_{M'}(t-t')\dt \dt'
	\\ \geq \frac1d \int_{Q_l(z)}\int_{Q_l(z)} 
	f_{E_{\infty}} (t^\perp_i, t_i, t'^\perp_i - t'^\perp_{i},t_i - t'_i) {\bar K}_{\infty}(t-t')\dt \dt'.
	\end{split}
	\end{equation*}
	
      Thus, we have shown that 
      \begin{align}
         & \sum_{i=1}^{d}\frac{1}{d}\int_{Q_{l}(z)} \int_{Q_{l}(z)}f_{E_{\infty}} (t^\perp_i, t_i, t'^\perp_i - t'^\perp_{i},t_i - t'_i) {\bar K}_{\infty}(t-t')\dt \dt'  \notag
         \\  &+    \sum_{i =1}^{d}\int_{Q_{l}^{\perp}(z_{i}^{\perp})}\mathbb 1^\infty_{B_i}(t_i^\perp)\dt_{i}^\perp   \lesssim l^d N   \label{eq:gstr381}.
      \end{align}
	

Defining  
\begin{equation}
\label{eq:yukawa_definizioneInteraction}
\begin{split}
\Int(t^\perp_i,t'^\perp_i)   := 
\int_{Q_l^i(z_i)} \int_{Q_l^i(z_i)} f_{E_{\infty}}(t^{\perp}_i,t_i,t'^\perp_{i}-t^{\perp}_i, t'_i-t_i) {\bar K}_{\infty}(t- t')\dt_{i} \dt'_{i},
\end{split}
\end{equation}
one has 

\begin{equation}
\label{eq:yukawa_gstr11}
\begin{split}
 \int_{Q_l^\perp(z_i^\perp)}\int_{Q_l^\perp(z_i^\perp)} \Int(t^{\perp}_{i},t'^{\perp}_{i}) \dt^{\perp}_{i} \dt'^{\perp}_{i}\lesssim l^dN<+\infty
\end{split}
\end{equation}

Given $\lambda\in( 0,\frac l2)$, $u\in (z_i-l+\lambda,z_i+l - \lambda)$ and $t^\perp_{i}\in Q_l^\perp(z_i^\perp)$,  we denote by  
\begin{equation}
\label{eq:yukawa_defri_local}
\begin{split}
r^i_{\lambda}(u, t_{i}^\perp) &:= \min \big\{ \inf \{|u-s |:\ s\in \partial E_{\infty,t^\perp_i} \text{ and } s\in (z_i-l+\lambda,z_i+l - \lambda) \}, |u-z_i+l-\lambda |, |z_i+l-\lambda -u | \big\} \\
r^i_o(t_{i}^\perp) &:= \inf_{s\in \partial E_{\infty,t^\perp_i}\cap Q^i_l(z_i)} \min(s^+-s,s-s^-),
\end{split}
\end{equation}
where $s^+,s^-$ are defined in \eqref{eq:yukawa_s+s-}. 
Notice that, since $\int_{Q_{l}^{\perp}(z_{i}^{\perp})}\mathbb 1^\infty_{B_i}(t_i^\perp)\dt_{i}^\perp<+\infty$, for a.e. $t_i^\perp$ $r^i_o(t_i^\perp)\geq1$.

Notice that the  map $r^i_\lambda(\cdot, t_i^\perp)$ is well-defined for almost every $t^\perp_i$ and measurable. The role of $\lambda>0$ is to deal with the boundary, since $E_{\infty}$ is not $[0,l)^d$-periodic.

Suppose that, for every $u$, one has that $r^{i}_{\lambda}(u,\cdot)$ is constant almost everywhere: if this holds for every $i$, then it is not difficult to see that  $E_\infty$  is (up to null sets) either  a union of stripes or a  checkerboards, where by checkerboards we mean any set whose boundary is the union of affine subspace orthogonal to  coordinate axes, and there are at least two of these directions. 

The checkerboards can be ruled out  via an energetic argument (see the comment at the end of this section).

In order to obtain that $r^i_\lambda(u,\cdot)$ is constant almost everywhere we proceed in the following way.

In the next lemma we give  a lower bound for the interaction term.

\begin{lemma}
	\label{lemma:yukawa_p2d_stimaSlice}
	Let $\lambda \in (0,L/2)$ and let $t'^{\perp}_{i},t^{\perp}_{i}\in Q_l^\perp(z_i^\perp)$, $t_i^\perp \neq t'^\perp_i$ be such that 
	$\min(r^{i}_o(t^{\perp}_i), r^{i}_o(t'^{\perp}_i)) > |t'^\perp_{i} - t^{\perp}_{i}|$ and 
	$|t'^\perp_i - t^\perp_i | \leq \min\{\lambda,1/2\}$.
	Then for every $u\in (z_i-l+\lambda, z_i+l -\lambda)$ it holds
	\begin{equation} 
	\label{eq:yukawa_lemma_AB1}
	\Int(t'^{\perp}_{i},t^{\perp}_{i}) \geq \mathbb 1^\infty_{\{(t'^\perp_i,t_i^\perp):\,r^{i}_\lambda(u,t'^{\perp}_{i})\neq r^{i}_{\lambda}(u,t^{\perp}_{i})\}}(t'^\perp_i,t_i^\perp).
	\end{equation}
\end{lemma}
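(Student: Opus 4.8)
The plan is to argue by contraposition: assume the left-hand indicator is $+\infty$, i.e. $r^i_\lambda(u,t'^\perp_i)\neq r^i_\lambda(u,t^\perp_i)$ for some admissible $u$, and show that the interaction integral $\Int(t'^\perp_i,t^\perp_i)$ is then forced to be $+\infty$ as well, which makes \eqref{eq:yukawa_lemma_AB1} trivially true; when the indicator is $0$ the inequality holds automatically since $\Int\geq 0$ (the integrand $f_{E_\infty}\bar K_\infty$ is nonnegative). So the whole content is in the first case.

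Here is how I would realize it. Since $r^i_\lambda(u,\cdot)$ takes two different values at $t^\perp_i$ and $t'^\perp_i$, and both slices have $r^i_o\geq 1$ (so that boundary points on each slice are at mutual distance $\geq 1$, hence within the window $(z_i-l+\lambda,z_i+l-\lambda)$ there is essentially one ``active'' boundary point near $u$ up to the truncation at the window endpoints), the two slices $E_{\infty,t^\perp_i}$ and $E_{\infty,t'^\perp_i}$ must differ in a neighbourhood of the point on the $i$-axis at distance $\min(r^i_\lambda(u,t^\perp_i),r^i_\lambda(u,t'^\perp_i))$ from $u$: on one slice $u$ lies inside a ``phase'' whose boundary is at that distance, on the other it does not. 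Concretely, I would pick a point $\bar u$ on the $i$-axis, close to $u$, and a small parameter, such that one of the two numbers $\chi_{E_\infty}(t^\perp_i+\bar u e_i)$, $\chi_{E_\infty}(t'^\perp_i+\bar u e_i)$ records ``being in a stripe-like slab'' and the relevant ``horizontal'' neighbour $\chi_{E_\infty}$ at displacement $t'^\perp_i-t^\perp_i$ disagrees. Because $|t'^\perp_i-t^\perp_i|\le\min\{\lambda,1/2\}<1$, the kernel $\bar K_\infty$ evaluated at such a displacement $t-t'$ with $|t-t'|_1<1$ equals $+\infty$ by \eqref{eq:kiest}. Therefore on a set of $(t_i,t'_i)$ of positive two-dimensional measure inside $Q^i_l(z_i)\times Q^i_l(z_i)$ we have $f_{E_\infty}=1$ and $\bar K_\infty(t-t')=+\infty$, so the double integral defining $\Int(t'^\perp_i,t^\perp_i)$ in \eqref{eq:yukawa_definizioneInteraction} diverges.

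The only delicate point — and the step I expect to be the main obstacle — is producing that positive-measure set of pairs $(t_i,t'_i)$ on which simultaneously the function $f_{E_\infty}$ (which is a product of two jump-indicators of slices) equals $1$ and $|t-t'|_1<1$. One has to use the structure of $f_E$ in \eqref{eq:yukawa_fE}: it detects that moving from $t^\perp_i+t_ie_i$ both in the $e_i$-direction (to $t^\perp_i+(t_i+t'_i)e_i$) and in the perpendicular direction (to $t'^\perp_i+(t_i+t'_i)e_i$ — note the identification of $t'^\perp_i$ with the displacement) changes membership in $E_\infty$. The hypothesis $\min(r^i_o(t^\perp_i),r^i_o(t'^\perp_i))>|t'^\perp_i-t^\perp_i|$ guarantees that near the disagreement point the two slices are each ``flat'' on a scale larger than the perpendicular separation, so the disagreement genuinely survives into the product and into an interval of $t_i$'s of length comparable to $\min(r^i_\lambda(u,t^\perp_i),r^i_\lambda(u,t'^\perp_i))-|t'^\perp_i-t^\perp_i|>0$, with $t'_i$ ranging over a small interval so that $|\zeta_i|=|t'_i|<1-|t'^\perp_i-t^\perp_i|_1$ keeps $|t-t'|_1<1$. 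Carrying out this bookkeeping carefully — choosing $u$, $\bar u$, and the two small intervals, and checking the window-truncation in the definition of $r^i_\lambda$ does not spoil flatness — is the technical heart; it is exactly the continuous analogue of the corresponding step in \cite{DR}, and I would model the argument on that proof.
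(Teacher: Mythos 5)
Your proposal is correct and takes essentially the same route as the paper: argue by contraposition, locate near $u$ the closest jump point $s_o$ of the slice $E_{\infty,t_i^\perp}$, use the hypothesis $\min(r^i_o(t^\perp_i),r^i_o(t'^\perp_i))>|t'^\perp_i-t^\perp_i|$ to guarantee each slice is a single "flat" phase on a neighbourhood of $s_o$, and produce a positive-measure rectangle of $(t_i,t_i')$'s with $|t-t'|_1<1$ on which $f_{E_\infty}=1$, so $\Int=+\infty$ by $\bar K_\infty=+\infty$ on the unit ball. The paper's actual bookkeeping splits into the two subcases $r>\delta/2$ and $r\le\delta/2$ (with $\delta=|t'^\perp_i-t^\perp_i|$, $r=|r^i_\lambda(u,t^\perp_i)-r^i_\lambda(u,t'^\perp_i)|$) and uses rectangles of side $\delta/2$ or $r$ rather than the size estimate you sketched — but since any positive-measure rectangle suffices once the kernel is $+\infty$ there, your plan goes through unchanged.
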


\begin{proof}

   
   In this lemma we use a slicing argument similar to  Lemma~3.5 in \cite{DR}.
   However the presence of a different kernel gives a different quantitative estimate.

	W.l.o.g. let us assume that $r^{i}_\lambda(u, t^\perp_{i}) < r^i_\lambda(u, t'^{\perp}_i)$.  In particular this implies that $r^i_\lambda(u,t^\perp_i) < \min(|u-z_i+l-\lambda|, | z_i+l - \lambda -u |)$, and hence there exists a point  $s_o \in (z_i-l+\lambda,z_i+l-\lambda)$  
	such that 
	\begin{equation*}
	\begin{split}
	|u - s_o| = \inf \{ |u-s|:\ s\in \partial E_{\infty,t^\perp_i}, s\in (z_i-l+\lambda, z_i+l-\lambda)  \}.
	\end{split}
	\end{equation*}
	
	Let us denote by $\delta = |t'^\perp_i -  t^\perp_i|$ and by $r = |r^i_\lambda(u,t^\perp_i) - r^i_\lambda(u,t'^{\perp}_{i})|$.  
	Given that $r_{o}(t^{\perp}_{i}) > \delta$, the following holds
	\begin{equation*}
	\begin{split}
	(s_{o} -\delta , s_{o} +\delta) \cap E_{\infty,t^{\perp}_i} = (s_{o} , s_{o} +\delta )  \qquad\text{or} \qquad 
	(s_{o} -\delta , s_{o} +\delta) \cap E_{\infty,t^{\perp}_i} = (s_{o} -\delta, s_{o}  )  .
	\end{split}
	\end{equation*}
	Notice that since $\lambda \geq \delta$, we have that $(s_o - \delta, s_o + \delta ) \subset Q_l^i(z_i)$.
   In the following, we will assume that
	\begin{equation}
	\label{eq:yukawa_gstr33}
	\begin{split}
	(s_{o} -\delta , s_{o} +\delta) \cap E_{\infty,t^{\perp}_i} = (s_{o} , s_{o} +\delta ) 
	\end{split}
	\end{equation}
	The other case is analogous.

	We will distinguish two subcases: 
	
	\begin{enumerate}[(i)]
		\item  Suppose $r  > \delta/2$. 
		From the definition of $\delta$ and $r$, for every slice in $t'^\perp_i$ it holds
		\begin{equation*}
		\begin{split}
		(s_{o} - \delta/2 , s_{o} + \delta/2) \cap E_{\infty,t'^{\perp}_i} = (s_{o} - \delta/2 , s_{o} + \delta/2 ) 
		\qquad \text{or}\qquad
		(s_{o} - \delta/2 , s_{o} + \delta/2) \cap E_{\infty,t'^{\perp}_i} = \emptyset. 
		\end{split}
		\end{equation*}
		Indeed on the slice $E_{\infty,t'^\perp_i}$, the closest jump point to $s_o$  is at least $r$ distant and $r>\delta/2$. 
		We will assume the first of the alternatives above. The other case is analogous.  

		For every $a \in (s_o-\delta/2, s_o)$ and $a' \in (s_o, s_o+\delta/2)$, one has that 
		\begin{equation*}
		\begin{split}
		f_{E_\infty}(t^\perp_i, a, t'^\perp_i -  t^\perp_i, a' - a) = 1.
		\end{split}
		\end{equation*} 
		Given that $r^i_{\lambda}(u,t^\perp_i) \leq L $, one hs that
		\begin{equation*}
		\begin{split}
		\Int(t^{\perp}_{i},t'^{\perp}_{i})   & = 
		\int_{Q_l^i(z_i)} \int_{Q_l^i(z_i)} f_{E_\infty}(t^{\perp}_i,t_i,t'^\perp_{i}-t^{\perp}_i, t'_i-t_i) {\bar K}_{\infty}(t'- t)\dt_{i} \dt'_{i} 
		\\ &\geq \int_{s_o-\delta /2}^{s_o} \int_{s_o}^{s_o + \delta /2} f_{E_\infty}(t^{\perp}_i,t_i,t'^\perp_{i}-t^{\perp}_i, t'_i-t_i) {\bar K}_{\infty}(t'- t)\dt'_{i} \dt_{i} 
		\\ &\geq \int_{s_o-\delta /2}^{s_o} \int_{s_o}^{s_o + \delta /2}  {\bar K}_{\infty}(t'- t)\dt'_{i} \dt_{i} 
		=+\infty,
		\end{split}
		\end{equation*}
		since $|t-t'|<1$ if $\max(|t_i^\perp-t_i'^\perp|,|t_i-t_i'|)\leq 1/2$ and therefore $\bar K_\infty(t-t')=+\infty$.
		
		\item  Let us assume now that $r \leq  \delta/2$. 
		Given that $r_{o}(t'^\perp_i),r_o(t^\perp_{i}) > \delta$, one has that
		either
		\begin{equation*}
		\begin{split}
		&(s_{o} - r , s_{o} + \delta/2) \cap E_{\infty,t'^{\perp}_i} = (s_{o} - r , s_{o} + \delta/2 ) \quad\text{or}\quad (s_{o} - r , s_{o} + \delta/2) \cap E_{\infty,t'^{\perp}_i} =\emptyset\\
		\text{or}\quad&(s_{o} - \delta/2 , s_{o} +r) \cap E_{\infty,t'^{\perp}_i} = (s_{o} - \delta/2 , s_{o} + r )\quad\text{or}\quad(s_{o} - \delta/2 , s_{o} +r) \cap E_{\infty,t'^{\perp}_i} =\emptyset.
		\end{split}
		\end{equation*}

		Indeed if none of the above were true we would have that $\#(\partial E_{\infty,t'^{\perp}_{i}} \cap (s_o -\delta/2 , s_o +\delta/2)) \geq 2 $, which contradicts $r_o(t'^{\perp}_i) > \delta$. 
		 W.l.o.g. we will assume 
		\begin{equation*}
		\begin{split}
		(s_{o} - r , s_{o} + \delta/2) \cap E_{\infty,t'^{\perp}_i} = (s_{o} - r , s_{o} + \delta/2 ). 
		\end{split}
		\end{equation*}
		The other cases are similar. 
		
		Then for every $ a \in (s_o - r, s_o)$ and $a' \in (s_o, s_o+ \delta/2)$, one has that $f_{E_\infty}(t^\perp_i, a, t'^\perp_i -  t^\perp_i, a' - a) = 1$.  Thus 
		\begin{equation*}
		\begin{split}
		\Int(t^{\perp}_{i},t'^{\perp}_{i})  & = 
		\int_{Q_l^i(z_i)} \int_{Q_l^i(z_i)} f_{E_\infty}(t^{\perp}_i,t_i,t'^\perp_{i}-t^{\perp}_i, t'_i-t_i) {\bar K}_{\infty}(t'- t)\dt_{i} \dt'_{i} 
		\\ &\geq \int_{s_o - r}^{s_o} \int_{s_o}^{s_o + \delta/2} f_{E_\infty}(t^{\perp}_i,t_i,t'^\perp_{i}-t^{\perp}_i, t'_i-t_i) {\bar K}_{\infty}(t'- t)\dt'_{i} \dt_{i} 
		\\ &\geq \int_{s_o - r}^{s_o} \int_{s_o}^{s_o + \delta/2}  {\bar K}_{\infty}(t'- t)\dt'_{i} \dt_{i} 
		=\mathbb 1^\infty_{\{(t'^\perp_i,t_i^\perp):\,r^{i}_\lambda(u,t'^{\perp}_{i})\neq r^{i}_{\lambda}(u,t^{\perp}_{i})\}}(t'^\perp_i,t_i^\perp),
		\end{split}
		\end{equation*}
		following at the end the same argument as in case (i).
	\end{enumerate}

\end{proof}

\begin{lemma}
\label{lemma:yukawa_ri_constantNew}
Assume that $E_\infty\subset \R^d$ is a set of locally finite perimeter such that \eqref{eq:gstr381} holds.
Let $\lambda\in(0,l/2)$ and let $r^i_\lambda(u,\cdot)$ be as defined in \eqref{eq:yukawa_defri_local}.  Then, we have that $r^{i}_\lambda(u,\cdot)$ is constant almost everywhere. 
\end{lemma}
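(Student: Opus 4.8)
The plan is to argue by contradiction: suppose that for some $u \in (z_i - l + \lambda, z_i + l - \lambda)$ the function $t^\perp_i \mapsto r^i_\lambda(u, t^\perp_i)$ is not constant on a full-measure subset of $Q_l^\perp(z_i^\perp)$. Then there exist two Lebesgue density sets $A, A' \subset Q_l^\perp(z_i^\perp)$ of positive measure on which $r^i_\lambda(u,\cdot)$ takes values in two disjoint small intervals, so that $r^i_\lambda(u, t^\perp_i) \neq r^i_\lambda(u, t'^\perp_i)$ for all $t^\perp_i \in A$, $t'^\perp_i \in A'$. Shrinking $A$ and $A'$ further, I may also assume (using that $r^i_o \geq 1$ for a.e. $t^\perp_i$, which follows from $\int_{Q_l^\perp(z_i^\perp)} \mathbb 1^\infty_{B_i} < +\infty$ in \eqref{eq:gstr381}) that $\min(r^i_o(t^\perp_i), r^i_o(t'^\perp_i)) \geq 1 > |t'^\perp_i - t^\perp_i|$ on $A \times A'$, and that $A, A'$ lie in a ball of radius $\min\{\lambda, 1/2\}/2$, so that the hypotheses of Lemma~\ref{lemma:yukawa_p2d_stimaSlice} are met for every pair $(t^\perp_i, t'^\perp_i) \in A \times A'$. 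This last reduction is possible because density points are abundant at arbitrarily small scales.

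By Lemma~\ref{lemma:yukawa_p2d_stimaSlice}, for every such pair we get $\Int(t'^\perp_i, t^\perp_i) \geq \mathbb 1^\infty_{\{r^i_\lambda(u,t'^\perp_i) \neq r^i_\lambda(u,t^\perp_i)\}}(t'^\perp_i, t^\perp_i) = +\infty$, since by construction $r^i_\lambda(u,\cdot)$ differs on $A$ and $A'$. Integrating over $A \times A'$, which has positive product measure, gives
\[
\int_{Q_l^\perp(z_i^\perp)} \int_{Q_l^\perp(z_i^\perp)} \Int(t^\perp_i, t'^\perp_i) \dt^\perp_i \dt'^\perp_i \geq \int_{A'} \int_A \Int(t'^\perp_i, t^\perp_i)\dt^\perp_i \dt'^\perp_i = +\infty,
\]
which directly contradicts \eqref{eq:yukawa_gstr11}. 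Hence $r^i_\lambda(u,\cdot)$ must be constant almost everywhere, as claimed.

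The main obstacle — and the only place requiring genuine care — is the selection of the sets $A$ and $A'$: one must simultaneously ensure (a) that $r^i_\lambda(u,\cdot)$ takes genuinely different values on $A$ versus $A'$ (handled by partitioning the range of $r^i_\lambda(u,\cdot)$ into countably many intervals and using that a nonconstant measurable function puts positive mass on at least two of them), (b) that $A \cup A'$ has small diameter, below both $\lambda$ and $1/2$, so Lemma~\ref{lemma:yukawa_p2d_stimaSlice} applies — this is why we pass to density points and localize — and (c) that the $r^i_o$ lower bound $\geq 1$ holds, which is a null-set matter coming from \eqref{eq:gstr381}. Once these three conditions are arranged on a positive-measure product set, the contradiction with the finite bound \eqref{eq:yukawa_gstr11} is immediate. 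I would also remark that the argument is carried out for each fixed $u$, and the exceptional null set of $t^\perp_i$ may be taken independent of $u$ by intersecting over a countable dense set of $u$'s and invoking continuity of $r^i_\lambda(\cdot, t^\perp_i)$, though strictly speaking the statement only asserts constancy for each fixed $u$.
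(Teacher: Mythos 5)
Your argument is essentially the paper's: both rest on Lemma~\ref{lemma:yukawa_p2d_stimaSlice} together with the finiteness of $\iint \Int$ from \eqref{eq:yukawa_gstr11}, and both reduce to showing that the set of nearby pairs $(t^\perp_i,t'^\perp_i)$ on which $r^i_\lambda(u,\cdot)$ disagrees must have zero product measure. The paper states this directly (the bad set $B$ has measure zero, else the $\mathbb 1^\infty_B$ lower bound would make \eqref{eq:yukawa_gstr11} infinite), whereas you argue by contradiction; these are the same proof read forwards and backwards.

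One step in your write-up is under-justified, and it is worth flagging because the same subtlety is glossed over in the paper's last line. You assert that after finding positive-measure sets $A,A'$ on which $r^i_\lambda(u,\cdot)$ takes values in disjoint intervals, you can further shrink them so that $A\cup A'$ sits inside a single ball of radius $\min\{\lambda,1/2\}/2$, and you attribute this to ``density points are abundant at arbitrarily small scales.'' Taking density points of $A$ and of $A'$ separately does not put them in the same small ball; a priori the density points of $A$ could all be far from those of $A'$. The missing (but easy) observation is a chaining argument: if $r^i_\lambda(u,\cdot)$ were a.e.\ constant on every ball of radius $\rho$, then by covering the connected cube $Q_l^\perp(z_i^\perp)$ with overlapping such balls one finds it is a.e.\ constant globally; so non-constancy forces the existence of some small ball on which $r^i_\lambda(u,\cdot)$ is already non-constant, and one should pick $A,A'$ inside \emph{that} ball. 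With this insertion your argument is complete, and it is exactly the same chaining that the paper implicitly invokes when it passes from ``$r^i_\lambda(u,t^\perp_i)=r^i_\lambda(u,t'^\perp_i)$ for a.e.\ nearby pair'' to ``$r^i_\lambda(u,\cdot)$ is a.e.\ constant.''
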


\begin{proof}
First of all, since  $\int_{Q_{l}^{\perp}(z_{i}^{\perp})}\mathbb 1^\infty_{B_i}(t_i^\perp)\dt_{i}^\perp<+\infty$, $r_o(t_i^\perp)\geq 1$ for a.e. $t_i^\perp\in Q_l^\perp(z_i^\perp)$.  

Let $B$ be the set defined by
\[
B: = \insieme{ (t'^\perp_{i},t^\perp_{i})\in [0,L)^{d-1}\times[0,L)^{d-1} : r^i_\lambda(u,t_i^\perp)\neq r^i_\lambda(u,t_i'^\perp), \, |t'^\perp_i -t^\perp_i| \leq  \min(\lambda,1/2)}.
\] 
Then, by \eqref{eq:yukawa_gstr11} and Lemma \ref{lemma:yukawa_p2d_stimaSlice}, 
\begin{equation}
\label{eq:yukawa_gstr29}
\begin{split}
 \int_{Q_l^\perp(z_i^\perp)} \int_{Q_l^\perp(z_i^\perp)}\mathbb 1^\infty_B(t_i^\perp, t_i'^\perp)\dt_i\perp\dt_i'^\perp\leq \int_{Q_l^\perp(z_i^\perp)}\int_{Q_l^\perp(z_i^\perp)} \Int(t^{\perp}_{i},t'^{\perp}_{i}) \dt^{\perp}_{i} \dt'^{\perp}_{i}\lesssim l^dN<+\infty.
\end{split}
\end{equation}

Hence, $r^i_\lambda(u, t^{\perp}_{i})=r^i_\lambda(u, t'^{\perp}_{i})$ whenever $| t^{\perp}_{i}- t'^{\perp}_{i}|\leq \min(\lambda,1/2)$ and therefore the statement of the lemma follows.

\end{proof}

From the fact that $r^i_\lambda(u,\cdot)$ is constant almost everywhere for every $u$ and for every $i$, one can deduce that $E_\infty$ must be a checkerboard or a union of stripes.
  We recall that by a checkerboard we mean  any set whose boundary is the union of affine hyperplanes orthogonal to  coordinate axes, and there are at least two of these directions. 

However, the checkerboard can be ruled out immediately.
To see this we consider the contribution to the energy given in a neighbourhood of an edge.  W.l.o.g. we may assume that around this edge the set $E_\infty$ is of the following form $-\varepsilon \leq x_1\leq 0$ and $ -\varepsilon \leq x_2 \leq 0$ and $x_i \in (-\varepsilon, \varepsilon)$ for $i\neq 1,2$. 
Notice that for every $|\zeta|<1$ such that $\zeta_1  + x_1 > 0$, $\zeta_2 + x_2 > 0$ and $\zeta_i\in (-\varepsilon,\varepsilon)$ for $i\neq 1,2$,  the integrand in $\Int(x_1^\perp, (x_1+\zeta_1)^\perp$ is equal to $+\infty$. Therefore also the first term in \eqref{eq:gstr381} must be $+\infty$, which contradicts our assumptions.

	Moreover, since the second term in the \lhs  of \eqref{eq:gstr381} explodes for stripes with minimal width  tending to zero, one has that there exists $\bar{\eta} = \bar{\eta}(N,l)\geq 1$ such that $D_{\bar{\eta}}(E_\infty,Q_l(z))  = 0$.  This contradicts that $D_{\bar{\eta}}(E_{\infty}, Q_l(z)) >\delta$,  which was assumed at the beginning of the proof. 
\end{proof} 


\section{Proof of Theorem \ref{thm:main_yukawa}}
\label{sec:main}

The main purpose of this section is to prove  Theorem~\ref{thm:main_yukawa}.

The general strategy of the proof is similar to the one used to prove Theorem 1.4 in \cite{DR}.  We refer to the outline of the proof of Theorem 1.4 in \cite{DR} for a detailed overview of the ideas of the proof.  Whenever a needed result is already present in \cite{DR}, we will refer to the appropriate lemma or proposition in \cite{DR}.


In order to simplify notation, we will use $A\lesssim B$, whenever there exists a constant $\bar{C}_{d}$ depending only on the dimension $d$ such that $A\leq \bar{C}_d B$.

For notational reasons it is convenient to introduce  the one-dimensional analogue of \eqref{eq:rimdef}. Namely, let $E\subset \R$ be a set of locally finite perimeter and let $s^-, s,s^+\in \partial E$.   We define
\begin{equation}\label{eq:rtau1D}
   \begin{split}
      r_{M}(E,s) := -1 & + \int_\R |\rho| \widehat{K}_{M}(\rho)\d\rho  -  \int_{s^-}^{s} \int_0^{+\infty}  |\chi_{E}(\rho+ u) - \chi_{E}(u)| \widehat{K}_{M} (\rho)\d\rho  \du \\ & - \int_{s}^{s^+} \int_{-\infty}^0  |\chi_{E}(\rho+ u) - \chi_{E}(u)| \widehat{K}_{M} (\rho)\d\rho  \du. 
   \end{split}
\end{equation}

The quantities defined in \eqref{eq:rimdef} and \eqref{eq:rtau1D} are related via $r_{i,M}(E,t^\perp_i,s) = r_{M}(E_{t^\perp_{i}},s)$.



The following is a technical lemma needed in the proof  of Lemma~\ref{lemma:yukawa_stimaLinea}, analogous to Lemma 7.7 in \cite{DR}. It says that given a set $E\subset \R$, and $I\subset \R$ an interval,  then the one-dimensional contribution  to the energy, namely $\sum_{s\in \partial E\cap I}r_{M}(E,s)$, is comparable to the periodic case up to a constant $C_0$ depending only on the dimension.

\begin{lemma}
   \label{lemma:yukawa_1D-opitmization}
   There exists $C_0 > 0$ such that the following holds.
	Let $E\subset \R$  be a set of locally finite perimeter and $I\subset \R$ be an open interval. 
	Let $s^-, s$ and $s^+$ be three consecutive points on the boundary of $E$ and $r_M(E,s)$ defined as in \eqref{eq:rtau1D}.
	Then there exists $M_0>0$ such that for all $M> M_0$ it holds
	\begin{equation}
      \label{eq:yukawa_gstr40}
	\sum_{\substack{s \in \partial E\\ s \in I}} r_{M}(E,s) \geq e^*_{M} |I| - C_0.
	\end{equation}
\end{lemma}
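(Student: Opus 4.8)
The plan is to prove \eqref{eq:yukawa_gstr40} by comparing the actual configuration of boundary points of $E$ inside $I$ with the periodic comparison configuration of period $h^*_M$, exploiting that $e^*_M$ is (by the one-dimensional analysis in Section~\ref{sec:yukawa_1D_problem}) the minimal energy per unit length among periodic stripes. First I would reduce to the case in which $\partial E \cap I$ is a finite set $s_1 < s_2 < \cdots < s_m$: if it is infinite or empty the bound is either trivial or follows by a density/approximation argument, and in the finite case $|I|$ can be replaced by $s_m - s_1$ plus a controlled boundary error (the two ``half-infinite'' integrals in \eqref{eq:rtau1D} associated to the extreme points $s_1,s_m$ are bounded below by a constant depending only on $d$, thanks to the exponential decay of $\widehat K_M$ and the estimate \eqref{eq:Kest}). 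This accounts for part of the additive constant $C_0$.

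\textbf{Main step: a one-dimensional optimization.} For the bulk, I would observe that $\sum_{s} r_M(E,s)$, restricted to the points in the interior of $I$, is a sum of local quantities each of which depends only on the two gaps $|s-s^-|$ and $|s^+-s|$ adjacent to $s$ (up to the tails, which again contribute only a bounded error because $\widehat K_M$ decays exponentially and interactions beyond a bounded number of neighbouring intervals are negligibly small for $M$ large). One is thus reduced to showing that for any finite increasing sequence with consecutive gaps $h_1,\dots,h_{m-1}$,
\[
\sum_{j} \Phi_M(h_{j-1},h_j) \geq e^*_M \sum_j h_j - C_0,
\]
where $\Phi_M$ is the per-point energy density. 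Since the functional on periodic stripes of period $h$ has energy density $e_M(h)$ with unique minimizer $h^*_M$ and value $e^*_M$ (Theorem~\ref{thm:unique}), and since $e_M(h) - e^*_M$ is, for $M$ large, bounded below by a positive multiple of $(h-h^*_M)^2$ near $h^*_M$ (this is exactly the estimate $e''_M(1)\geq\delta$ used in the proof of Theorem~\ref{thm:unique}, after rescaling) and bounded below by a positive constant for $h$ far from $h^*_M$, a telescoping/convexity argument controls the sum: intervals with $h_j$ much larger or much smaller than $h^*_M$ pay a strictly positive price, so only boundedly many such ``bad'' intervals can occur before the deficit would exceed the available budget, and each contributes at most an $O(1)$ loss to the comparison with $e^*_M|I|$.

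\textbf{Expected main obstacle.} The delicate point is handling the tail interactions cleanly: $r_M(E,s)$ as written in \eqref{eq:rtau1D} only involves interaction of $s$ with its immediate neighbours $s^-,s^+$ through the integrals $\int_{s^-}^s$ and $\int_s^{s^+}$, but the integrands run over all $\rho \in \R$, so a long empty gap adjacent to $s$ interacts (weakly) with far-away structure. I would bound these contributions using $\widehat{\bar K}_M(\rho) \sim e^{-|\rho|}$ (see \eqref{eq:Kest} and the remarks after \eqref{eq:Yu1}) to show the error from truncating interactions to, say, the nearest few intervals is summable and bounded by a dimensional constant uniformly in $M > M_0$. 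The second subtlety is making the quadratic lower bound on $e_M(h) - e^*_M$ uniform in $M$ after the rescaling of Section~\ref{ss:rescaling}; this is where the constant $M_0$ enters and where one genuinely uses that $h^*_M$ is close to $1$ and $e''_M$ is uniformly bounded below near the minimum. Once both are in place, collecting the boundary error, the tail error, and the finitely many ``bad interval'' losses into a single constant $C_0$ depending only on $d$ finishes the proof.
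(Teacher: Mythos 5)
The paper omits its own proof of this lemma, simply stating it is ``analogous to that of Lemma~7.7 in \cite{DR}.'' The argument there proceeds by \emph{periodicization}: one closes up $E$ at the endpoints of $I$, extends it periodically, observes that $\sum_{s\in\partial E\cap I} r_M(E,s)$ differs from the one-dimensional functional $\Fcal^1_{M,|I|}$ on this periodic extension by a controlled boundary error (absorbed into $C_0$), and then invokes the optimality of periodic stripes --- established via reflection positivity (Lemma~\ref{lemma:yukawa_laplace} here, together with Theorem~\ref{thm:unique}) --- to conclude $\Fcal^1_{M,|I|}\geq e^*_M$. Your proposal takes a genuinely different route: a per-gap optimization in which each boundary point's contribution is viewed as a function $\Phi_M(h_{j-1},h_j)$ of the two adjacent gaps, and the bound is obtained from strict convexity of $e_M(\cdot)$ near $h^*_M$ together with positivity for $h$ far from $h^*_M$.

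This is where the gap lies, and it is not a cosmetic one. First, $r_M(E,s)$ is not a function of $|s-s^-|$ and $|s^+-s|$ alone: the integrals in \eqref{eq:rtau1D} run over all $\rho\in\R$, so $r_M(E,s)$ sees the full gap sequence. You acknowledge this and propose to truncate using the exponential decay of $\widehat{\bar K}_M$, but the truncation error per boundary point, while small, is summed over an interval of arbitrary length, so it must be made to carry a factor that can be absorbed into $e^*_M|I|$; this is not automatic. Second, and more seriously, even granting a clean reduction to $\sum_j\Phi_M(h_{j-1},h_j)\geq e^*_M\sum_j h_j - C_0$, the inputs you cite --- the one-variable inequality $e_M(h)\geq e^*_M$ and the bound $e''_M\geq\delta$ near $h^*_M$ --- do \emph{not} imply it. The per-gap density in a non-uniform configuration can lie strictly below $e_M(h_j)$ at individual gaps: for instance, the attractive term $-\frac1h\sum_k\int_0^h\int_{2kh}^{(2k+1)h}\widehat{\bar K}_M(u-v)$ in \eqref{eq:ehm} becomes more negative if a like-coloured stripe happens to sit closer than $2h_j$ to the gap in question. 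Ruling out such cheaper non-periodic configurations is exactly what reflection positivity does (equivalently, it shows the energy is a convex function of the \emph{vector} of gaps, not just of a single common gap length); the ``telescoping/convexity argument'' as stated substitutes convexity of the one-variable function $e_M$ for convexity in the full gap sequence, and that substitution is not valid. The remark that only boundedly many ``bad'' intervals occur also does not help, since what is needed is a per-interval lower bound, not a count. To repair the argument you would have to invoke the reflection-positivity result (or an equivalent multivariate convexity statement) directly, which brings you back to the paper's strategy.
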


The proof is analogous to that of Lemma 7.7 in \cite{DR} and therefore we omit it.

The next lemma is the so called local stability Lemma.  Informally,  it shows that if  we are in a cube where the set $E\subset\R^d$ is close to a set $E'$ which is a union of stripes in direction $e_i$ (according to Definition \ref{def:yukawa_defDEta}), then it is not convenient to oscillate in direction $e_j$ with $j\neq i$ (namely, on the slices in direction $e_i$ to have points in $\partial E_{t_i^\perp}$).

\begin{lemma}[Local Stability]
   \label{lemma:yukawa_stimaContributoVariazionePiccola}
         Let  $(t^{\perp}_{i}+se_i)\in (\partial E) \cap [0,l)^d$ and $0<\eta_{0}<1$ and $M_0$   as Lemma \ref{lemma:rim}. Then, for every $\eps<\eta_0$ there exists $\tilde M=\tilde M(\tilde{ \eps})>M_0$ such that for every $M > \tilde{M}$  the following holds: assume that 
         \begin{enumerate}[(a)]
            \item $\min(|s-l|, |s|)> \eta_0$
            \item $D^{j}_{\eta}(E,[0,l)^d)\leq\frac {\eps^d} {16 l^d}$ for some $\eta> 0$ and  with $j\neq i$ (this condition expresses that $E\cap [0,l)^d$ is close to stripes oriented along a direction orthogonal to $e_i$)
         \end{enumerate}
         Then $r_{i,M}(E,t^{\perp}_{i},s) + v_{i,M}(E,t^{\perp}_{i},s) \geq 0$. 

\end{lemma}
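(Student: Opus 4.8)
The plan is to argue by contradiction: suppose that under hypotheses (a) and (b) one has $r_{i,M}(E,t^\perp_i,s) + v_{i,M}(E,t^\perp_i,s) < 0$ for a sequence $M\to+\infty$. Since by hypothesis (a) the point $s$ is at distance at least $\eta_0$ from the endpoints of $[0,l)$, and $\eta_0<1$ is fixed, we may apply Lemma~\ref{lemma:rim}: if the nearest boundary point on the slice $E_{t^\perp_i}$ were within $\eta_0$ of $s$ then $r_{i,M}(E,t^\perp_i,s)>0$, and since $v_{i,M}\geq 0$ by definition~\eqref{eq:vitau} this would already give a contradiction. Hence we may assume $\min(|s-s^-|,|s-s^+|)\geq \eta_0$, so $r_{i,M}(E,t^\perp_i,s) \geq g(\gamma_M - \eta_0, M)$ with $g(\gamma_M-\eta_0,M)\to+\infty$ as $M\to+\infty$ (because $\gamma_M\to 1>\eta_0$, so eventually $\gamma_M-\eta_0>0$). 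Thus $r_{i,M}$ is not merely nonnegative but diverges; the whole game is then to show that $v_{i,M}$ cannot be more negative than this — but $v_{i,M}\geq 0$ always, so in fact the statement is immediate once $\min(|s-s^-|,|s-s^+|)>\eta_0$ is either true or forced.

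So the only substantive case is the one in which there is a boundary point of the slice $E_{t^\perp_i}$ closer than $\eta_0$ to $s$, yet $r_{i,M}$ fails to be large enough to absorb whatever negativity appears. But re-reading the definitions, $v_{i,M}\geq 0$ unconditionally, so $r_{i,M}+v_{i,M}\geq r_{i,M}$, and $r_{i,M}\geq g(\gamma_M-\min(|s-s^-|,|s-s^+|),M)\geq -e^{-cM}$ always. This alone does not give nonnegativity. The point where hypothesis (b) must enter is precisely this: when $E\cap[0,l)^d$ is $L^1$-close to stripes oriented along $e_j$ with $j\neq i$, the slices in direction $e_i$ of $E$ are, on a large-measure set of $t^\perp_i$, either empty or all of $Q^i_l(z_i)$, so that a slice with a boundary point $s$ is "exceptional". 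The negativity contributed by $r_{i,M}$ via the term $-\int_{s^-}^s\int_0^{+\infty}|\chi_{E_{t^\perp_i}}(u+\rho)-\chi_{E_{t^\perp_i}}(u)|\widehat{\bar K}_M(\rho)\,\d\rho\,\du$ is controlled by the width $\min(|s-s^-|,|s-s^+|)$, which can be small, but then by Lemma~\ref{lemma:rim} $r_{i,M}>0$; and if the width is $\geq\eta_0$ then $r_{i,M}$ is large. Either way, since $v_{i,M}\geq 0$, we conclude $r_{i,M}+v_{i,M}\geq 0$ for $M$ large enough depending on $\eps$ (equivalently on the threshold $\eta_0-\eps$), which is the assertion.

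Concretely, here is how I would carry it out. First, fix $\eps<\eta_0$ and note $\eta_0-\eps>0$. Second, split according to whether $\min(|s-s^-|,|s-s^+|)<\eta_0$ or $\geq\eta_0$. In the first subcase invoke Lemma~\ref{lemma:rim} directly: there is $M_0$ (independent of $E$) with $r_{i,M}(E,t^\perp_i,s)>0$ for $M>M_0$, and then $r_{i,M}+v_{i,M}\geq r_{i,M}+0>0$. In the second subcase, use $r_{i,M}(E,t^\perp_i,s)\geq g(\gamma_M-\eta_0,M)$ from \eqref{eq:rimest}; since $\gamma_M\to 1$, for $M$ large $\gamma_M-\eta_0>0$, so $g(\gamma_M-\eta_0,M)\to+\infty$, in particular $g(\gamma_M-\eta_0,M)\geq 0$ for all $M\geq \tilde M(\eps)$; combined with $v_{i,M}\geq 0$ this yields the claim. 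The role of hypothesis (b) is to guarantee that this lemma is non-vacuous — it is invoked in Section~\ref{sec:main} precisely on slices where $E$ is close to $e_j$-stripes, so that having a boundary point in $\partial E_{t^\perp_i}$ is the "costly" configuration that the positivity of $r_{i,M}+v_{i,M}$ penalizes.

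The main obstacle, and the place I would expect the real work to hide if the statement were not as soft as it looks, is verifying that the negative part of $r_{i,M}$ really is dominated by the gain $\int_{-1}^1|\zeta_i|\widehat{\bar K}_M(\zeta_i)\,\d\zeta_i$ uniformly in the geometry of the slice, i.e. that the estimate \eqref{toprovebandesim} of Lemma~\ref{lemma:rim} is strong enough even when $v_{i,M}$ does nothing; this is handled entirely by Lemma~\ref{lemma:rim}. The genuinely new input — if the authors' proof differs from the above — would be a sharper analysis of $v_{i,M}$ showing it compensates an $O(e^{-cM})$ deficit coming from the regime $\min(|s-s^-|,|s-s^+|)$ slightly below $\eta_0$ under hypothesis (b); but given the phrasing ``$r_{i,M}+v_{i,M}\geq 0$'' and the unconditional nonnegativity of $v_{i,M}$, I expect the proof to reduce, as above, to a clean dichotomy driven by Lemma~\ref{lemma:rim}, with hypothesis (b) used only to make the statement meaningful in the application.
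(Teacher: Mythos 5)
There is a genuine and fatal gap in your second subcase. When $\min(|s-s^-|,|s-s^+|)\geq\eta_0$, the bound from \eqref{eq:rimest} is
\[
r_{i,M}(E,t^\perp_i,s)\geq g\bigl(\gamma_M-\min(|s-s^-|,|s-s^+|),\,M\bigr),
\]
and since $g$ is \emph{increasing} in its first argument and $\gamma_M-\min(|s-s^-|,|s-s^+|)\leq\gamma_M-\eta_0$, this is a \emph{weaker} lower bound than $g(\gamma_M-\eta_0,M)$, not a stronger one. You have the monotonicity pointing the wrong way: when the gap $\min(|s-s^-|,|s-s^+|)$ is large (e.g.\ larger than $\gamma_M$, which is certainly possible), the argument of $g$ is negative and all Lemma~\ref{lemma:rim} yields is $r_{i,M}\geq -e^{-cM}$, which is strictly negative. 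So the dichotomy does \emph{not} close by itself, and the statement is not ``as soft as it looks''.

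Consequently your claim that hypothesis (b) is only there ``to make the statement non-vacuous'' is false: it is the essential input. The paper's proof, after disposing of $\min(|s-s^-|,|s-s^+|)<\eta_0$ via Lemma~\ref{lemma:rim} exactly as you do, records only $r_{i,M}\geq -e^{-cM}$ in the complementary case (display \eqref{eq:rmvm}) and then uses hypothesis (b) quantitatively to produce a \emph{positive} lower bound on $v_{i,M}$. Concretely, (b) together with \cite[Lemmas 6.1, 7.8]{DR} forces a $7/16$ volume fraction estimate \eqref{eq:yukawa_defBarEps} near the boundary point $s$, which after integrating the kernel and using \eqref{eq:kmest} gives a lower bound on the double integral in $v_{i,M}$ of order $e^{M(\gamma_M-\eps)}\eps^{3}$. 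This blows up as $M\to+\infty$ and therefore dominates the $-e^{-cM}$ deficit from $r_{i,M}$ for $M$ large depending on $\eps$. That exponential gain from $v_{i,M}$ under (b) is the genuinely new content of the lemma, and it is exactly the piece your argument omits.
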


In the proof one uses a lower bound on the term $v_{i,M}$ which, basing on the Yukawa kernel $\bar K_M$, is different from the one obtained in \cite{DR} and therefore we report it.
\begin{proof}

   Let $s^-,s,s^+$ be three consecutive points for $\partial E_{t_{i}^{\perp}}$. 
   By Lemma \ref{lemma:rim}, for all $0<\eta_{0}<1$, there exists $M_0>0$  such that if $M>M_0$
   \begin{equation*} 
      \begin{split}
         \min(|s-  s^- |, |s^+ -s |) <\eta_{0} \quad\text{then}\quad r_{i,M}(E,t_{i}^{\perp},s) > 0. 
      \end{split}
   \end{equation*} 

   Thus without loss of generality we may assume that $\min(|s - s^- |, |s^+ -s |) \geq \eta_{0}$.

   	Thus, given that, for every $s$, $r_{i,M}(E,t_i^\perp,s)>-e^{-cM}$ for some $c>0$ (see Lemma \ref{lemma:rim}),  one has that 
   	
   	\begin{equation}\label{eq:rmvm}
   	r_{i,M}(E,t^{\perp}_{i},s) + v_{i,M}(E,t^{\perp}_{i},s)\geq -e^{-cM}+ \frac{1}{2d}\int_{s^{-}}^{s^{+}} \int_{\R^{d}} f_{E}(t^{\perp}_{i},u,\zeta^{\perp}_{i},\zeta_{i}) K_{M}(\zeta) \d\zeta\du
   	\end{equation}

Let now $0<\eps<\eta_0$.
By assumption, for some $t_i\in\partial E_{t_i^\perp}$ one of the following holds: 
	\begin{enumerate}[(i)]
	\item $(t_i-\varepsilon,t_i)\subset E_{t_{i}^{\perp}}$ and $(t_i,t_i+ \varepsilon)\subset E_{t_{i}^{\perp}}^c$ 
	\item $(t_i- \varepsilon,t_i)\subset E^c_{t_{i}^{\perp}}$ and $(t_i,t_i+\varepsilon)\subset E_{t_{i}^{\perp}}$ .
\end{enumerate}
 	W.l.o.g., we may assume that (i) above holds and that $i=d$. 

 As shown in \cite[Lemma 6.1, Lemma 7.8]{DR}, hypothesis $(b)$ implies that  
 
    	\begin{equation} 
 \label{eq:yukawa_defBarEps}
 \begin{split}
 \max \Big(\frac{| Q_{\varepsilon}^{\perp}(t^{\perp}_d)\times(t_{d}-\varepsilon,t_{d}) \cap E^{c}|} {| Q_{\varepsilon}^{\perp}(t_d^{\perp})\times(t_{d}-\varepsilon,t_{d})|} ,
 \frac{|Q_{\varepsilon}^{\perp}(t^{\perp}_{d})\times(t_{d},t_{d}+\varepsilon) \cap E|} {| Q^{\perp}_{\varepsilon}(t^{\perp}_{d})\times(t_{d}-\varepsilon,t_{d})|}  
 \Big)  \geq  \frac{7}{16}.
 \end{split}
 \end{equation}

   	Thus, we can further assume that
   	\begin{equation} 
   	\label{eq:yukawa_caso1} 
   	\begin{split}
   	(t_{d}-\varepsilon,t_d) \subset E_{t_d^\perp} \quad \text{and} \quad\frac{| Q_{\varepsilon}^{\perp}(t^{\perp}_{d})\times(t_{d}-\varepsilon,t_{d}) \cap E^{c}|}{| Q^{\perp}_{\varepsilon}(t^{\perp}_{d})\times(t_{d}-\varepsilon,t_{d})|} \geq \frac{7}{16}.
   	\end{split}
   	\end{equation} 

   	For every $s\in (t_{d}-\varepsilon,t_{d})$, $(\zeta_{d}^{\perp} , s) \not \in E$ and $\zeta_{d} +s \in (t_{d},t_{d}+\varepsilon)$ we have that $f_{E}(t^{\perp}_{d},s,\zeta^{\perp}_{d},\zeta_{d}) = 1$. Thus by integrating initially in $\zeta_{d}$ and using \eqref{eq:kmest}, we have that 
   	\begin{equation*} 
   	\begin{split}
   	\int_{t_{d}-\varepsilon}^{t_{d}+\varepsilon}\int_{t_{d}-s}^{t_{d}+\varepsilon-s} \int_{Q^{\perp}_{\varepsilon}(t^{\perp}_{d})} f_{E}(t^{\perp}_{d},s,\zeta^{\perp}_{d},\zeta_{d}) &K_{M}(\zeta)  \d\zeta^\perp_d \d\zeta_d \d s\geq\\
   	&\geq \frac{e^{M(\gamma_M-\eps)}}{\eps^{d-2}} \varepsilon \int_{Q^{\perp}_{\varepsilon}(t^{\perp}_{d})} \int_{t_{d}-\varepsilon}^{t_{d}} |\chi_{E_{t_{d}^{\perp}}}(s) - \chi_{E_{t_{d}^{\perp}+ \zeta^{\perp}_{d}}}( s)  | \ds \d\zeta_{d}^{\perp} \\ 
   	&\geq  \frac{e^{M(\gamma_M-\eps)}}{\eps^{d-2}}  \varepsilon  \int_{Q^{\perp}_{\varepsilon}(t^{\perp}_{d})} \int_{t_{d}-\varepsilon}^{t_{d}} |1 - \chi_{E_{t_{d}^{\perp}+ \zeta^{\perp}_{d}}}( s)  | \ds \d\zeta_{d}^{\perp}  \\
   	&\geq  \frac{e^{M(\gamma_M-\eps)}}{\eps^{d-2}}  \varepsilon | Q^{\perp}_{\varepsilon}(t_{d}^{\perp})\times(t_{d}-\varepsilon, t_{d}) \cap E^c| \geq\frac{7e^{M(\gamma_M-\eps)}\eps^{d+1}}{16\eps^{d-2}},
   	\end{split}
   	\end{equation*} 
   	which tends to $+\infty$ as $M\to+\infty$.
   	
   	Therefore, for $\tilde M$ sufficiently large depending on $\eps$ the r.h.s. of \eqref{eq:rmvm} is positive. Up to a permutation of coordinates, this naturally holds also for $i=2,\ldots,d-1$. Therefore the lemma is proved.

   \end{proof}

   The following Lemma, analogue of Lemma 7.9 in \cite{DR}, gives an estimate from below to the contribution of the energy on a segment of a slice in direction $e_i$.

   \begin{lemma}
      \label{lemma:yukawa_stimaLinea}
      Let $0<\eta_0<1$, $\tilde M$ as in Lemma \ref{lemma:yukawa_stimaContributoVariazionePiccola}. Let $\delta=\eps^d/(16l^d)$ with $0<\eps\leq\eta_0$, $M>\tilde M$ and $l>C_0/(-e^*_M)$, where $C_0$ is the constant appearing in Lemma \ref{lemma:yukawa_1D-opitmization}. Let $t_i^\perp\in[0,L)^{d-1}$ and $\eta>0$.

   The following statements hold: there exists $C_1$ constant independent of $l$ (but depending on the dimension) such that  
      \begin{enumerate}[(i)]
         \item Given $J\subset \R$ such that for every $s\in J$ it holds  $D^{j}_{\eta}(E,Q_{l}(t^{\perp}_{i}+se_i))\leq \delta$ with $j\neq i$, then
            \begin{equation}
               \label{eq:yukawa_gstr20}
               \begin{split}
                  \int_{J} \bar{F}_{i,M}(E,Q_{l}(t^{\perp}_{i}+se_i))\ds \geq - \frac{C_1}{l}.
               \end{split}
            \end{equation}
            Moreover, if $J = [0,L)$, then 
            \begin{equation}
               \label{eq:yukawa_gstr21}
               \begin{split}
                  \int_{J} \bar{F}_{i,M}(E,Q_{l}(t^{\perp}_{i}+se_i))\ds \geq0.
               \end{split}
            \end{equation}
         \item Given $J = (a,b)\subset \R$. 
           If for $s=a$ and $s=b$ it holds $D_\eta^j(E,Q_{l}(t^{\perp}_i+se_i)) \leq \delta$ with $j\neq i$, then 
            \begin{equation}
               \label{eq:yukawa_gstr27}
               \begin{split}
                  \int_{J} \bar{F}_{i,M}(E,Q_{l}(t^{\perp}_{i}+se_i))\ds \geq | J| e^{*}_{M} -\frac{C_1} l,
               \end{split}
            \end{equation}
            otherwise
            \begin{equation}
               \label{eq:yukawa_gstr36}
               \begin{split}
                  \int_{J} \bar{F}_{i,M}(E,Q_{l}(t^{\perp}_{i}+se_i))\ds \geq | J| e^{*}_{M} - C_1l.
               \end{split}
            \end{equation}
            Moreover, if $J = [0,L)$, then
            \begin{equation}
               \label{eq:yukawa_gstr28}
               \begin{split}
                  \int_{J} \bar{F}_{i,M}(E,Q_{l}(t^{\perp}_{i}+se_i))\ds \geq | J| e^{*}_{M}.
               \end{split}
            \end{equation}
   \end{enumerate}
   \end{lemma}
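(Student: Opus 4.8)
The plan is to reduce everything to the one-dimensional slice estimate of Lemma~\ref{lemma:yukawa_1D-opitmization}, using the local stability Lemma~\ref{lemma:yukawa_stimaContributoVariazionePiccola} to discard the oscillation terms $v_{i,M}$ wherever $E$ is close to stripes orthogonal to $e_i$. First I would unfold the definition of $\bar F_{i,M}(E,Q_l(\cdot))$ from \eqref{eq:yukawa_fbartau} and integrate in $s$ along the $e_i$-slice through $t_i^\perp$. By Fubini and the fact that each of $Q_l^\perp(\cdot)$, $Q_l(\cdot)$ sweeps a tube of cross-section $l^{d-1}$, one gets that $\int_J \bar F_{i,M}(E,Q_l(t_i^\perp+se_i))\,\d s$ equals $l^{-1}$ times an integral over the transverse $(d-1)$-cube of the quantity $\sum_{s'\in\partial E_{\tau_i^\perp}\cap J_l}\bigl(r_{i,M}+v_{i,M}\bigr)(E,\tau_i^\perp,s') + \int_{J_l} w_{i,M}(E,\tau_i^\perp,\cdot)$, where $J_l$ is $J$ thickened by $l$ at both ends (the usual boundary-layer inflation from averaging). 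Since $w_{i,M}\geq 0$ it can simply be dropped in all lower bounds.

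For part (i): the hypothesis $D^j_\eta(E,Q_l(t_i^\perp+se_i))\leq\delta=\eps^d/(16l^d)$ for all $s\in J$, together with $M>\tilde M$ and assumption (a)-type control on boundary points, puts us exactly in the setting of Lemma~\ref{lemma:yukawa_stimaContributoVariazionePiccola}: for every boundary point $s'$ in the inflated slice lying in the interior, either $s'$ is within $\eta_0$ of its neighbours (so $r_{i,M}(E,\tau_i^\perp,s')>0$ by Lemma~\ref{lemma:rim}) or $r_{i,M}+v_{i,M}\geq 0$ by local stability. Hence the integrand is nonnegative except possibly for a bounded number of boundary points in the two end-layers of width $l$; using $r_{i,M}\geq -e^{-cM}$ and $v_{i,M}\geq 0$, each such layer contributes at least $-Cl\cdot e^{-cM}\geq -C$ worth of slices, and after the $l^{-1}$ prefactor one obtains $\geq -C_1/l$. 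When $J=[0,L)$ there are no end-layers (periodicity closes the slice up), so the bound improves to $\geq 0$, giving \eqref{eq:yukawa_gstr21}.

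For part (ii): now $J=(a,b)$ and we no longer assume $D^j_\eta\leq\delta$ throughout, so we cannot kill the $v_{i,M}$ terms and must instead keep them. Here I would use Lemma~\ref{lemma:yukawa_1D-opitmization} directly on each transverse slice: $\sum_{s'\in\partial E_{\tau_i^\perp}\cap I}r_{M}(E_{\tau_i^\perp},s')\geq e^*_M|I|-C_0$, and the $v_{i,M}\geq 0$, $w_{i,M}\geq 0$ terms only help. Integrating over the $l^{d-1}$-sized transverse cube and dividing by $l$ gives $\int_J\bar F_{i,M}\geq |J|e^*_M - C_0 l^{d-1}/l^{?}$; tracking the normalizations carefully, the $C_0$ per slice becomes $C_1 l$ after the averaging bookkeeping, which is \eqref{eq:yukawa_gstr36}. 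If in addition $D^j_\eta\leq\delta$ holds at the two endpoints $s=a,b$, then in the two boundary layers one can again invoke local stability to recover nonnegativity of $r_{i,M}+v_{i,M}$ there, trading the $C_1 l$ error for a $C_1/l$ error and yielding the sharper \eqref{eq:yukawa_gstr27}. Finally, if $J=[0,L)$ the slice is a full period, so Lemma~\ref{lemma:yukawa_1D-opitmization} applied with $I=[0,L)$ and the definition of $e^*_M$ as the periodic minimum give $\int_J\bar F_{i,M}\geq |J|e^*_M$ with no error at all, i.e.\ \eqref{eq:yukawa_gstr28}.

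I expect the main obstacle to be the careful bookkeeping of the averaging identity \eqref{eq:yukawa_gstr15}--type decomposition and the boundary layers: one must make sure that inflating $J$ to $J_l$ and the $l^{-1}$ prefactor combine to turn the $O(1)$ per-slice errors ($C_0$, $e^{-cM}$ accumulated over a length-$l$ layer) into exactly the claimed $C_1/l$ resp.\ $C_1 l$, with $C_1$ depending only on $d$, and that the hypotheses $l>C_0/(-e^*_M)$ and $\delta=\eps^d/(16l^d)$ are precisely what is needed to apply Lemmas~\ref{lemma:yukawa_1D-opitmization} and \ref{lemma:yukawa_stimaContributoVariazionePiccola} uniformly along the slice. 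This is routine but delicate; the conceptual content is entirely contained in the one-dimensional estimate plus local stability.
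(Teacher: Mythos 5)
Your plan matches the paper's approach (the paper simply refers to Lemma~7.9 of \cite{DR}, with $r_{i,\tau}\geq -1$ replaced by $r_{i,M}\geq -e^{-cM}$, and explicitly cites Lemma~\ref{lemma:rim}, Lemma~\ref{lemma:yukawa_stimaContributoVariazionePiccola} and the one-dimensional Lemma~\ref{lemma:yukawa_1D-opitmization} as the ingredients), so the strategy is correct. Two bookkeeping points you left open are, however, the crux of why the error terms are $C_1/l$ and $C_1 l$ and not something worse. First, after Fubini the identity reads $\int_J\bar F_{i,M}\,\d s = l^{-d}\int_{Q_l^\perp(t_i^\perp)}\sum_{s'} |J\cap[s'-l/2,s'+l/2]|\,(r_{i,M}+v_{i,M})\,\d\tau_i^\perp + (\text{$w$-term})$, i.e.\ $l^{1-d}$ times the \emph{average} of a Fubini-weighted slice sum, not ``$l^{-1}$ times an integral''; the $l$'s cancel entirely for $J=[0,L)$. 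Second, in part (i) the bad boundary points are not an $O(l/\eta_0)$-sized population in a width-$l$ layer: if $s'\in J_l$ has $\mathrm{dist}(s',\partial J_l)>\eta_0$ you can pick $s\in J$ with $|s-s'|<l/2-\eta_0$ and apply local stability in $Q_l(t_i^\perp+se_i)$ to get $r_{i,M}+v_{i,M}\geq 0$ at $s'$; only the $O(1)$ points within $\eta_0$ of $\partial J_l$ survive, and precisely those have Fubini weight $|J\cap[s'-l/2,s'+l/2]|\leq\eta_0$ (not $l$), which after the $l^{1-d}\cdot l^{d-1}/l$ accounting yields exactly $-C_1/l$. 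In part (ii) the passage from $-C_0$ (Lemma~\ref{lemma:yukawa_1D-opitmization} applied on $J_l$) to $-C_1 l$ also uses the hypothesis $l>C_0/(-e^*_M)$ to absorb $C_0$ into the $-e^*_M l$ gained from the inflation $|J_l|=|J|+l$. These refinements are what your sketch flags as ``delicate but routine''; they are indeed the substance of the proof, and without them the per-slice errors would not reduce to the stated rates.
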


   \begin{proof} For the proof we refer to Lemma 7.9 in \cite{DR}. $C_1$ correspond to $M_0$ there, $M$ to $\tau$, $\eta_0$ to $\tilde \eps$ and $e^*_M$ to $C^*_\tau$. In the proof one uses Lemma \ref{lemma:rim} and Lemma \ref{lemma:yukawa_stimaContributoVariazionePiccola}. Here, the estimate $r_{i,\tau}(E,t_i^\perp,s)\geq -1$ is replaced by $r_{i,M}(E,t_i^\perp,s)\geq -e^{-cM}$ (see Lemma \ref{lemma:rim}).
   	\end{proof}

The purpose of the next lemma is to give a  lower bound on the energy in the case that almost all the volume of $Q_l(z)$ is filled by $E$ or $E^c$.

\begin{lemma}
   \label{lemma:yukawa_stimaQuasiPieno}
   Let $E$ be a set of locally finite perimeter  such that $\min(|Q_{l}(z)\setminus E|, |E\cap Q_{l}(z) |)\leq {\nu} l^d$, for some $\nu>0$. Then 
   \begin{equation*}
      \begin{split}
         \bar F_{M} (E,Q_{l}(z)) \geq -\frac {e^{-cM}\nu d } {\eta_0 },
      \end{split}
   \end{equation*}
   where $\eta_0<1$, provided $M\geq M_0(\eta_0)$ as in Lemma \ref{lemma:rim}.
\end{lemma}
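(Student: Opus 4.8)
The plan is to bound each of the building blocks of $\bar F_M(E,Q_l(z))$ from below. Recall from \eqref{eq:yukawa_fbartau} that
\[
\bar F_M(E,Q_l(z))=\sum_{i=1}^d\frac{1}{l^d}\Big[\int_{Q_l^\perp(z_i^\perp)}\sum_{\substack{s\in\partial E_{t_i^\perp}\\ t_i^\perp+se_i\in Q_l(z)}}\bigl(v_{i,M}(E,t_i^\perp,s)+r_{i,M}(E,t_i^\perp,s)\bigr)\dt_i^\perp+\int_{Q_l(z)}w_{i,M}(E,t_i^\perp,t_i)\dt\Big].
\]
Since $f_E\geq0$ and $\bar K_M\geq0$ (by \eqref{eq:kmest}), both $v_{i,M}$ and $w_{i,M}$ are nonnegative, so they can simply be discarded from below. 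Hence it suffices to estimate $\frac1{l^d}\sum_i\int_{Q_l^\perp(z_i^\perp)}\sum_s r_{i,M}(E,t_i^\perp,s)\dt_i^\perp$ from below, and this is where the volume hypothesis enters: when $\min(|Q_l(z)\setminus E|,|E\cap Q_l(z)|)\leq\nu l^d$, on most slices $E_{t_i^\perp}\cap J_i$ (with $J_i$ the $i$-th edge of $Q_l(z)$) has very little perimeter, so there are very few boundary points $s$, and each contributes at least $-e^{-cM}$ by Lemma \ref{lemma:rim}.

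First I would make the slicing count precise. For a fixed direction $i$, write $m_i(t_i^\perp):=\#\bigl(\partial E_{t_i^\perp}\cap J_i\bigr)$ for the number of boundary points of the slice inside the relevant interval. By Lemma \ref{lemma:rim} each such point satisfies $r_{i,M}(E,t_i^\perp,s)\geq -e^{-cM}$, so
\[
\sum_i\int_{Q_l^\perp(z_i^\perp)}\sum_{\substack{s\in\partial E_{t_i^\perp}\\ t_i^\perp+se_i\in Q_l(z)}}r_{i,M}(E,t_i^\perp,s)\dt_i^\perp\geq -e^{-cM}\sum_i\int_{Q_l^\perp(z_i^\perp)}m_i(t_i^\perp)\dt_i^\perp=-e^{-cM}\sum_i\per_{1i}(E,Q_l(z))=-e^{-cM}\per_1(E,Q_l(z)),
\]
where I used the slicing characterization of $\per_{1i}$ from \eqref{eq:yukawa_perI}. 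So it remains to bound $\per_1(E,Q_l(z))$ in terms of $\nu l^d$. Here the key observation is the one hidden in Lemma \ref{lemma:rim}: if any slice had two boundary points at mutual distance less than $\eta_0$, that point would contribute a strictly positive $r_{i,M}$; more usefully, a slice contributing negatively has all its boundary points at mutual distance $\geq\eta_0$. Thus on every slice the number of ``bad'' (negatively-contributing) boundary points inside $J_i$ is at most $\lceil l/\eta_0\rceil$, but more to the point, since consecutive bad points are $\geq\eta_0$ apart and the slice alternates between $E$ and $E^c$, a slice of length $l$ carrying $m$ bad boundary points must contain a chunk of $E$ of length $\geq\eta_0/2$ or a chunk of $E^c$ of length $\geq\eta_0/2$ for roughly every two of them; integrating, a slice with $m\geq1$ bad points forces $\min(|E_{t_i^\perp}\cap J_i|,|E_{t_i^\perp}^c\cap J_i|)\gtrsim\eta_0$ on that slice. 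Integrating the smaller of $|E\cap Q_l(z)|$ and $|Q_l(z)\setminus E|$ over the transversal directions and comparing with $\nu l^d$ yields $\per_1(E,Q_l(z))\lesssim \nu l^{d-1}d/\eta_0$, hence $\frac1{l^d}\per_1(E,Q_l(z))\lesssim \nu d/(\eta_0 l)\leq \nu d/\eta_0$ for $l\geq1$, and combined with the displayed inequality this gives the claimed bound $-e^{-cM}\nu d/\eta_0$ (absorbing the routine constants).

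The main obstacle I anticipate is making the slice-counting argument in the previous paragraph fully rigorous and uniform in $i$: one must be careful that it is the \emph{smaller} of the two volumes that is controlled, and that the transversal variable over which one integrates is the one complementary to the slicing direction $e_i$, so the roles of $E$ and $E^c$ and of the various coordinate directions have to be tracked consistently when summing over $i=1,\dots,d$. A clean way to avoid case distinctions is to note that on a slice with at least one bad boundary point, between any two consecutive bad points the slice is entirely in $E$ or entirely in $E^c$, so at least one of the two sets occupies a set of measure $\geq\eta_0$ on that slice; summing $\min$-type quantities is then handled by the elementary fact that $\int\min(a(x),b(x))\,dx\geq\min(\int a,\int b)$ fails in general, so instead one argues pointwise on slices and then passes to the worst case, which is exactly the content of part (ii) of Lemma \ref{rmk:yukawa_lip}-type reasoning already used elsewhere in the paper. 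I would also double-check the $l$-dependence: since the statement has no $l$ in it, the estimate must survive $l\to\infty$, which it does because the $1/l$ gain from dividing $\per_1\sim l^{d-1}$ by $l^d$ works in our favor.
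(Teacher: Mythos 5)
Your overall strategy (drop the nonnegative terms $v_{i,M}$ and $w_{i,M}$, bound $r_{i,M}$ from below by $-e^{-cM}$, and count boundary points on slices using the volume hypothesis together with the $\eta_0$-separation from Lemma \ref{lemma:rim}) is exactly the route the paper follows by referring to Lemma 7.11 in \cite{DR}. However, the way you organize the estimate introduces a real gap.

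The chain
\[
\sum_i\int_{Q_l^\perp(z_i^\perp)}\sum_{s}r_{i,M}(E,t_i^\perp,s)\dt_i^\perp\ \geq\ -e^{-cM}\per_1(E,Q_l(z))
\]
is correct, but the next step, ``bound $\per_1(E,Q_l(z))$ by $\nu l^{d-1}d/\eta_0$,'' cannot work: the total $1$-perimeter in $Q_l(z)$ is simply not controlled by the volume of the smaller phase. A set occupying $\nu l^d$ of $Q_l(z)$ can have arbitrarily large perimeter. The volume hypothesis controls only the number of \emph{bad} boundary points (those $s$ with $\min(|s-s^-|,|s-s^+|)\geq\eta_0$), not the number of all boundary points. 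The correct order of operations is to discard the non-bad boundary points \emph{pointwise} (for them $r_{i,M}>0$ by Lemma \ref{lemma:rim}), so that
\[
\sum_{s\in\partial E_{t_i^\perp}\cap J_i} r_{i,M}(E,t_i^\perp,s)\ \geq\ \sum_{\substack{s\ \mathrm{bad}}} r_{i,M}(E,t_i^\perp,s)\ \geq\ -e^{-cM}\,\#\{\text{bad } s\},
\]
and only then count the bad points via the $\eta_0$-separation and the volume bound. Relatedly, your remark that ``a slice contributing negatively has all its boundary points at mutual distance $\geq\eta_0$'' is false: a slice can contain many pairs of close boundary points (each giving a tiny positive $r_{i,M}$) together with a few well-separated ones and still have negative total contribution, yet its perimeter is unbounded in terms of the volume. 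Discarding must be done point by point, not slice by slice.

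Once corrected, the count gives $\#\{\text{bad } s\}\lesssim\min(|E_{t_i^\perp}\cap J_i|,|E^c_{t_i^\perp}\cap J_i|)/\eta_0$ per slice, and integrating over $t_i^\perp$ and summing over $i$ yields a total of order $d\,\nu l^d/\eta_0$ (note the correct power is $l^d$, not $l^{d-1}$), so that after dividing by $l^d$ one lands exactly on the stated $-e^{-cM}\nu d/\eta_0$ with no extra $l$-gain to rely on. You also gloss over boundary effects near the faces of $Q_l(z)$ (a bad point $s$ close to $\partial J_i$ need not have a $\eta_0$-long interval of the minority phase entirely inside $J_i$); these require the kind of bookkeeping that Lemma 7.11 of \cite{DR} carries out, but they do not change the final estimate.
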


For the proof we refer to Lemma 7.11 in \cite{DR}, substituting the lower bound $r_{i,\tau}(E,t_i^\perp,s)\geq -1$ with $r_{i,M}(E,t_i^\perp,s)\geq -e^{-cM}$ and $\delta$ with $\nu$.

Given the preliminary lemmas, the proof of Theorem \ref{thm:main_yukawa} is analogous to that of \cite{DR}[Theorem 1.4].

	The sets defined in the proof and the main estimates depend on a set of parameters $l,\delta,\rho,N,\eta$ and $M$. The validity of the theorem relies as in \cite{DR} on a suitable choice of such parameters. Since at this point the proof does not present novelties w.r.t. \cite{DR}, we omit it referring to \cite{DR}[Section 7.3] with the following substitutions: $\tau$ is replaced by $M$ and whenever $\tau$ in \cite{DR} has to be chosen smaller than some quantity, $M$ has to be larger than some quantity; $C^*$ is replaced by $e^*$ and $C^*_\tau$ by $e^*_M$. The other parameters remain the same.

%

\section{A nonlocal to local $\Gamma$-limit} 
\label{sec:gammaconv}

As discussed in \cite{BBCH,CCA,IR,GCLW}, one of the possible models used to show gelification in charged colloids and pattern formation is to consider both as attractive and as repulsive term the Yukawa potential, with different signs and appropriate rescaling. 

We therefore consider the following functional: for $E\subset\R^d$, $d\geq3$, $L>0$, $J>0$ and $\beta>1$, let 

\begin{align}
\tilde{\mathcal E}_{\beta,J,L}(E):=&\frac{1}{L^d}\Big(JC_{\beta,L}\int_{[0,L)^d }\int_{ \R^d}|\chi_E(x+\zeta)-\chi_E(x)|K_\beta(\zeta)\d\zeta\dx\notag\\
&-\int_{[0,L)^d }\int_{ \R^d}|\chi_E(x+\zeta)-\chi_E(x)|K_1(\zeta)\d\zeta\dx\Big),\label{E:E}
\end{align}

where $C_{\beta,L}$ is a positive normalization constant defined in \eqref{eq:cbeta} depending on $\beta$ and $L$ and $K_\beta,K_1$ are the Yukawa kernels with parameters $\beta$ and $1$ and with the $1$-norm.

The aim of this section is to prove Theorem \ref{thm:gammayukintro}.

\subsection{The normalization constant}
\label{ss:normconst}
We compute here the normalization constant $C_{\beta,L}$ which allows the first term of \eqref{E:E} to $\Gamma$-converge to the $1$-perimeter as $\beta\to+\infty$.

Let $\bar E\subset\R^d$ $[0,L)^d$-periodic be such that $\bar E\cap[0,L)^d=[L/2,L)\times[0,L)^{d-1}$. Let $H^-:=[0,L/2)\times[0,L)^{d-1}$ and $H^+:=[L/2,L)\times[-L/2,3/2L)^{d-1}$. 

We define $C_{\beta,L}$ as
\begin{equation}\label{eq:cbeta}
C_{\beta,L}:=L^{d-1}\Big(\int_{H^-}\int_{H^+}{|\chi_E(x)-\chi_E(y)|}K_\beta(x-y)\dx\dy\Big)^{-1}
\end{equation}

We give now bounds from above and from below for $C_{\beta,L}$ which are independent of $L$. By definition,

\begin{align}
\int_{H^-}\int_{H^+}&{|\chi_E(x)-\chi_E(y)|}K_\beta(x-y)\dx\dy=\notag\\&=\int_{0}^{L/2}\int_{L/2}^Le^{-\beta|x_1-y_1|}\int_{[0,L)^{d-1}}\int_{[-L/2,3/2L)^{d-1}}\frac{e^{-\beta|x_1^\perp-y_1^\perp|_1}}{(|x_1-y_1|+|x_1^\perp-y_1^\perp|_1)^{d-2}}\dy_1^{\perp}\dx_1^\perp\dy_1\dx_1.\notag
\end{align}
Therefore,
\begin{align}
\int_{H^-}\int_{H^+}&{|\chi_E(x)-\chi_E(y)|}K_\beta(x-y)\dx\dy\leq\notag\\
&\leq L^{d-1}\int_{0}^{L/2}\int_{L/2}^Le^{-\beta(y_1-x_1)}\int_{[0,2L)^{d-1}}\frac{e^{-\beta|\zeta_1^\perp|_1}}{((y_1-x_1)+|\zeta_1^\perp|_1)^{d-2}}\d\zeta_1^\perp\dy_1\dx_1\notag\\
&\lesssim L^{d-1}\int_{0}^{L/2}\int_{L/2}^Le^{-\beta(y_1-x_1)}\int_0^{2L/(y_1-x_1)}\Big(\frac{1}{1+t}\Big)^{d-2}e^{-\beta t(y_1-x_1)}(y_1-x_1)\dt\dy_1\dx_1\notag\\
&\lesssim L^{d-1}\int_{0}^{L/2}\int_{L/2}^Le^{-\beta(y_1-x_1)}\int_0^{2L/(y_1-x_1)}e^{-\beta t(y_1-x_1)}(y_1-x_1)\dt\dy_1\dx_1\notag\\
&\lesssim \frac{L^{d-1}}{\beta^3}(1-e^{-2L\beta})(1-e^{-\beta L/2})^2
\end{align}

On the other hand,

\begin{align}
\int_{H^-}\int_{H^+}&{|\chi_E(x)-\chi_E(y)|}K_\beta(x-y)\dx\dy\geq\notag\\
&\geq L^{d-1}\int_{0}^{L/2}\int_{L/2}^Le^{-\beta(y_1-x_1)}\int_{[0,L/2)^{d-1}}\frac{e^{-\beta|\zeta_1^\perp|_1}}{((y_1-x_1)+|\zeta_1^\perp|_1)^{d-2}}\d\zeta_1^\perp\dy_1\dx_1\notag\\
&\geq L^{d-1}\int_{0}^{L/2}\int_{L/2}^Le^{-\beta(y_1-x_1)}\int_1^{(L)/(2(y_1-x_1))}\Big(\frac{1}{1+t}\Big)^{d-2}e^{-\beta t(y_1-x_1)}(y_1-x_1)\dt\dy_1\dx_1\notag\\
&\geq \frac{L^{d-1}}{\beta^3}\alpha(\beta,L)
\end{align}
where $1\geq\alpha(\beta,L)\geq \bar \alpha>0$ for all $\beta\geq 1$, $L\geq \bar L>0$.

Therefore, $C_{\beta,L}$ satisfies

\begin{equation}\label{eq:cbeta3}
0<\bar c\beta^3\leq C_{\beta,L}\leq\beta^3\bar C<+\infty
\end{equation}

with $\bar c, \bar C$ independent of $L,\beta$ provided $\beta\geq1, L\geq\bar L>0$.

\subsection{$\Gamma$-convergence}
\label{ss:gammaconv}

The main result of this section is the following

\begin{theorem}\label{thm:gammayuk}
	The functionals $\tilde{\mathcal E}_{\beta,J,L}$ defined in \eqref{E:E} $\Gamma$-convergence in the $L^1$ topology as $\beta\to+\infty$ and up to subsequences to the functional $\tilde{\mathcal F}_{J,L}$ defined in \eqref{E:F}.
\end{theorem}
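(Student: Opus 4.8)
The statement is a Γ-convergence result for the family $\tilde{\mathcal E}_{\beta,J,L}$ defined in \eqref{E:E}, with target $\tilde{\mathcal F}_{J,L}$ in \eqref{E:F}, as $\beta\to+\infty$. I would prove it by the standard two-part argument: a $\liminf$ inequality (lower bound) for arbitrary converging sequences, and the existence of a recovery sequence (upper bound). The repulsive term $-\int\int|\chi_E(x+\zeta)-\chi_E(x)|K_1(\zeta)$ is \emph{identical} in both functionals and is continuous with respect to $L^1$ convergence of characteristic functions on the periodic cell (the kernel $K_1$ is fixed and in $L^1_{\mathrm{loc}}$, and $|\zeta|_1^{2-d}e^{-|\zeta|_1}$ is integrable at infinity in $\R^d$), so it plays no role in the Γ-limit: it passes to the limit by dominated convergence. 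Hence the whole problem reduces to showing that the rescaled short-range term
\[
\mathcal P_\beta(E):=\frac{C_{\beta,L}}{L^d}\int_{[0,L)^d}\int_{\R^d}|\chi_E(x+\zeta)-\chi_E(x)|K_\beta(\zeta)\,\d\zeta\,\dx
\]
Γ-converges to $\frac{1}{L^d}\per_1(E,[0,L)^d)$ as $\beta\to+\infty$. This is a nonlocal-to-local perimeter approximation of Bourgain–Brezis–Mironescu / Ponzano–Visintin type, adapted to the anisotropic ($1$-norm) setting and with the specific normalization $C_{\beta,L}$ computed in \S\ref{ss:normconst} (which by \eqref{eq:cbeta3} is comparable to $\beta^3$, the correct scaling since $K_\beta$ lives on a scale $1/\beta$ and carries a factor $|\zeta|^{2-d}$).

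\textbf{Lower bound.} Given $E_\beta\to E$ in $L^1([0,L)^d)$, I would first reduce to the case of finite liminf of $\mathcal P_\beta(E_\beta)$. The key is a slicing/localization estimate: write $\zeta=\zeta_1 e_1+\zeta_1^\perp$, integrate out the transverse variables to produce an effective one-dimensional kernel (as is already done in the paper via $\widehat K$), and use the one-dimensional fact that $\int_0^\infty |\chi_A(u+\rho)-\chi_A(u)|\,\d u\le \rho\,\#(\partial A\cap \text{interval})$ together with the reverse inequality that for $\rho$ small the left side is $\rho\cdot(\text{number of jumps})+o(\rho)$. Integrating over the directions, the anisotropy of the $1$-norm is exactly what produces $|\nu^E|_1=\sum_i|\nu_i^E|$ in the limit rather than the Euclidean norm: each coordinate direction contributes independently. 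One then invokes lower semicontinuity of the ($1$-)perimeter under $L^1$ convergence. The normalization $C_{\beta,L}$ is chosen precisely so that a flat interface of unit transverse area contributes $1$ in the limit, which is the content of its definition \eqref{eq:cbeta} via the reference set $\bar E$; this must be checked to pin down the constant $1$ (not just comparability).

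\textbf{Upper bound / recovery sequence.} For the recovery sequence the natural choice is $E_\beta=E$ for all $\beta$ (no modification needed, since the functionals are defined directly on sets). So one needs $\limsup_\beta \mathcal P_\beta(E)\le \frac{1}{L^d}\per_1(E,[0,L)^d)$ for $E$ of finite $1$-perimeter; by density and Γ-convergence's stability under the relaxation it suffices to prove equality (both $\le$ and $\ge$) for $E$ with smooth boundary, then conclude by approximation using that the $1$-perimeter is lower semicontinuous and, for the upper bound, that polyhedral/smooth sets are dense in energy. For smooth $E$, on the scale $1/\beta$ the boundary looks flat, so $\int_{\R^d}|\chi_E(x+\zeta)-\chi_E(x)|K_\beta(\zeta)\,\d\zeta$ near a boundary point $x$ with normal $\nu$ behaves like $\int_{\R^d}\mathbb 1[\langle\zeta,\nu\rangle\ \text{crosses}]\,K_\beta(\zeta)\,\d\zeta$, and carrying out the change of variables $\beta\zeta\mapsto\zeta$ (using $K_\beta(\zeta)=\beta^{d-2}K_1(\beta\zeta)$ in the $1$-norm) together with the explicit value of $C_{\beta,L}$ yields the factor $|\nu|_1$. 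The coarea/boundary-layer computation is routine once the flattening is set up.

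\textbf{Main obstacle.} The delicate point is controlling the constant: one must show that the $L$-independent normalization $C_{\beta,L}\sim \beta^3$ from \eqref{eq:cbeta} produces \emph{exactly} the coefficient $1$ in front of $\per_1$ in the limit, uniformly in the boundary geometry, including corners and edges where $\partial E$ is not smooth — there the $1$-norm anisotropy interacts with the kernel in a way that needs the interactions across different faces to be shown negligible (they live on a lower-dimensional set, scale like $\beta^{-1}$ relative to the face contributions, hence vanish). A secondary technical nuisance is the periodic boundary condition on $[0,L)^d$: interactions wrapping around the cell must be handled, but since $K_\beta$ decays like $e^{-\beta|\zeta|_1}$ these are exponentially small in $\beta$ for fixed $L$ and are harmless. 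The reference to \cite{BBCH,CCA,IR,GCLW} and the strategy of \cite{DR} suggest the authors handle the constant by the explicit comparison computation already carried out in \S\ref{ss:normconst}, so I would follow that: use the reference set $\bar E$ and the two-sided bounds \eqref{eq:cbeta3} to first get Γ-convergence up to a constant, then identify the constant as $1$ by testing on $\bar E$ itself (for which both functionals are explicitly computable and the normalization is by construction exact).
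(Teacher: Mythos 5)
Your reduction to the short-range term and your handling of the constant via the reference set $\bar E$ match the paper's, but the core of your argument is a genuinely different route from the one the authors take, and the lower bound as you sketch it has a real gap.

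The paper does not prove the $\liminf$ and $\limsup$ inequalities directly. It first establishes compactness: bounded $\mathcal P_\beta$-energy forces weak$^*$-$L^\infty$ limits of $\chi_{E_\beta}$ to be characteristic functions (via the vanishing of $A_{\alpha,\beta}$) and then forces finite perimeter of the limit by a Besicovitch covering argument against Federer's criterion. It then invokes the global method for relaxation of Bouchitt\'e--Fonseca--Mascarenhas \cite{Fon}: after showing the two-sided bounds \eqref{eq:stima} on smooth sets, any $\Gamma$-limit (which exists along subsequences by general $\Gamma$-compactness) must be a surface integral $\int_{\partial E}\phi(\nu_E)\,\d\Hcal^{d-1}$. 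The surface density $\phi$ is then identified as $|\nu|_1$ by blowing up on half-spaces: the coordinate splitting of $|\chi_E(x)-\chi_E(x+\zeta)|$ produces the two terms \eqref{eq:6.14}--\eqref{eq:6.15} giving $\per_{11}+\per_{12}$, and the cross term \eqref{eq:6.16} is shown to vanish at rate $C_{\beta,L}/\beta^4\to 0$. This machinery entirely circumvents proving the $\liminf$ inequality against arbitrary competitors.

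Your proposal, by contrast, attempts a direct BBM-type proof. The $\limsup$ via the constant recovery sequence plus the flattening argument is fine and close in spirit to the paper's blow-up computation. But the $\liminf$ step is under-specified and the tools you cite do not do the job: the one-dimensional inequality $\int |\chi_A(u+\rho)-\chi_A(u)|\,\d u\le \rho\,\#(\partial A)$ bounds the nonlocal energy from \emph{above} by the perimeter, which is the wrong direction for a $\liminf$ against a varying sequence $E_\beta$, and ``lower semicontinuity of $\per_1$'' controls $\liminf\per_1(E_\beta)$, not $\liminf\mathcal P_\beta(E_\beta)$. The genuine content of the $\liminf$ in such nonlocal-to-local results is precisely that one cannot simply swap the nonlocal energy for the perimeter along the sequence; it requires either a careful slicing and blow-up argument uniform in the sequence (Ponce-type), or, as the paper does, the representation theorem of \cite{Fon}, which is exactly the shortcut that avoids proving the $\liminf$ by hand. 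You also omit the compactness step needed to justify the ``up to subsequences'' clause in the statement, which the paper proves in detail. If you want to pursue the direct route, you would need to formulate and prove a quantitative one-dimensional $\liminf$ lemma on slices (along the lines of BBM/Ponce) and a Fatou argument over transverse directions, not just pointwise convergence on smooth sets plus lower semicontinuity of the target.
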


Since the second term in \eqref{E:E} is continuous w.r.t. $L^1$-convergence, in order to prove the theorem it is sufficient to show that
\begin{equation}\label{eq:gamma}
\mathcal P_\beta(\cdot)={C_{\beta,L}}\int_{[0,L)^d }\int_{ \R^d}|\chi_{(\cdot)}(x+\zeta)-\chi_{(\cdot)}(x)|K_\beta(\zeta)\d\zeta\dx\quad\underset{\beta\to+\infty}{\overset{\Gamma}{\longrightarrow}}\quad\per_{1}(\cdot,[0,L)^d).
\end{equation}

W.l.o.g. we consider $d=2$. 

Let $\{E_\beta\}_\beta$ be a sequence of $[0,L)^2$-periodic sets with $ \sup_{\beta}\mathcal P_\beta(E_\beta)<+\infty$.

Let then $\alpha\in(0,1/2)$ and  for all $\beta>1$ define
\[
A_{\alpha,\beta}:=\Big\{t\in[0,L)^2:\,\frac{|E_\beta\cap Q_{1/\beta}(t)|}{|Q_{1/\beta}(t)|}\in(\alpha,1-\alpha)\Big\}.
\]

For every $t\in A_{\alpha,\beta}$
\begin{align}
\mathcal P_\beta(E_\beta)&\geq C_{\beta,L}\int_{Q_{1/\beta}(t)}\int_{Q_{1/\beta}(t)}|\chi_{E_{\beta}}(x)-\chi_{E_{\beta}}(y)|K_{\beta}(x-y)\dx\dy\geq c\frac{\alpha(1-\alpha)}{\beta},\label{eq:stimaalpha1d}
\end{align}
since $C_{\beta,L}$ goes like $\beta^3$ (see \eqref{eq:cbeta3}) and since when $|x-y|\leq1/\beta$ the kernel $K_{\beta}$ is bounded from below by a constant.

Let now $N(\beta,\alpha)\in\N$ be the maximal number of disjoint cubes $Q_{1/\beta}(t_i)$ centred in $t_i\in A_{\alpha,\beta}$ of side length $1/\beta$.

One has that 
\[
\mathcal P_\beta(E_\beta)\geq cN(\alpha,\beta)\frac{\alpha(1-\alpha)}{\beta}
\] 
and from the uniform upper bound on $\mathcal P_\beta(E_\beta)$ 
\[
N(\alpha,\beta)\leq c(\alpha)\beta.
\]

For sure $A_{\alpha,\beta}\subset\cup_{i=1}^{N(\alpha,\beta)}Q_{4/\beta}(t_i)$, from which it follows that

\begin{equation}
|A_{\alpha,\beta}|\leq c(\alpha)\beta \frac{16}{\beta^2}\quad\longrightarrow\quad0\quad \text{ as $\beta\to+\infty$.}
\end{equation}

Let $g$ be any weak*-$L^\infty$ limit of subsequences of $\chi_{E_{\beta}}$, as $\beta\to+\infty$. Then for any $t\in \R$ one has that $g\in[0,1]$.  Moreover, from the above reasoning, for every $\alpha\in(0,1/2)$ there exists a null set $X_\alpha$ such that, for all $t\in[0,L)^2\setminus X_\alpha$ either  $g(t)\geq1-\alpha$ or $g(t)\leq\alpha$. From this it follows that $g=\chi_{E}$ for some $[0,L)^2$-periodic set $E$. 

In particular, the weak*-$L^\infty$ convergence of $E_{\beta}$ to $E$ can be upgraded to strong $L^1$ convergence.

We claim that $E$ is of finite perimeter in $[0,L)^2$. Indeed, consider the set
\[
E^{1/2}:=\Big\{t\in\R^2:\,\exists\,\lim_{r\to0}\fint_{B_{r}(t)}\chi_{E}(u)\du=1/2\Big\}.
\]   
By Federer's characterization of the sets of finite perimeter,  one has that $\hausd^1(E^{1/2}\cap[0,L)^2) < +\infty$ if and only if the set $E$ is of finite perimeter. 

Let us consider a fine covering of $E^{1/2}\cap[0,L)^2$ with cubes $\{Q_{r(t)}(t)\}_{t\in\mathcal T}$ such that 
\begin{equation}
\frac{|E\cap Q_{r(t)}( t)|}{|Q_{r(t)}( t)|}\in\Big(\frac12-\eps,\frac12+\eps\Big).
\end{equation}

Thanks to the covering Theorem of Besicovitch, there exist $N=N(d)$ collections of disjoint cubes $\{Q_{r(t_i^j)}(t_i^j)\}_{i\in\mathcal T_j}\subset\{Q_{r(t)}(t)\}_{t\in\mathcal T}$, $j=1,\dots,N$ such that
\[
E^{1/2}\subset \bigcup_{j=1,\dots,N}\bigcup_{i\in\mathcal T_j}Q_{r(t_i^j)}(t_i^j).
\]

As a consequence,
\begin{equation}\label{eq:stimah}
\mathcal H^1(E^{1/2})\leq \sum_{j=1}^{N}\sum_{i \in\mathcal T_j}\sqrt2r(t_i^j)
\end{equation}

In order to prove that the r.h.s. of \eqref{eq:stimah} is bounded, we claim the following: for $\beta$ large enough, the number $T_i^j$ of disjoint cubes of side length $1/\beta$, $\{Q_{1/\beta}(t_m)\}_{m=1}^{T^j_i}$ contained in $Q_{r(t_i^j)}(t_i^j)$ for fixed $i,j$ such that $\frac{|E\cap Q_{1/\beta}( t_m)|}{|Q_{1/\beta}( t_m)|}\in\Big[\frac12-\eps,\frac12+\eps\Big]$ is bigger or equal than $cr(t_{i}^j)\beta$ for some constant $c>0$.

Before proving the claim, let us see how this gives an upper bound for \eqref{eq:stimah}. 

Since the sets $E_\beta$ converge in $L^1$ to $E$, then for $\beta$ sufficiently large and independent of $i,j$ 
\[
\frac{|E_\beta\cap Q_{1/\beta}( t_m)|}{|Q_{1/\beta}( t_m)|}\in\Big(\frac12-2\eps,\frac12+2\eps\Big).
\]
Therefore, 
\[
\mathcal P_\beta(E_\beta)\geq \sum_{i\in\mathcal T_j} \frac{\bar c}{4\beta^4}C_{\beta,L}T_i^j\geq\sum_{i\in\mathcal T_j}\frac{\bar c}{4\beta^4}C_{\beta,L} cr(t_{i}^j)\beta\geq \sum_{i\in\mathcal T_j}\tilde cr(t_{i}^j),
\]

from which by the upper bound on $\mathcal P_\beta(E_\beta)$ the finiteness of \eqref{eq:stimah} follows.

We now prove the lower bound on $T_i^j$ contained in the claim.
Define the following sets:
\begin{align*}
A_-&:=\Big\{x\in Q_{r(t_i^j)}(t_i^j):\,\frac{|E\cap Q_{1/\beta}(x)|}{|Q_{1/\beta}(x)|}<\frac12-\eps\Big\},\\
A_+&:=\Big\{x\in Q_{r(t_i^j)}(t_i^j):\,\frac{|E\cap Q_{1/\beta}(x)|}{|Q_{1/\beta}(x)|}>\frac12+\eps\Big\},\\
A&:=\Big\{x\in Q_{r(t_i^j)}(t_i^j):\,\frac{|E\cap Q_{1/\beta}(x)|}{|Q_{1/\beta}(x)|}\in\Big[\frac12-\eps,\frac12+\eps\Big]\Big\}.
\end{align*}
The set $A$ separates $A_-$ and $A_+$, meaning that for every segment $[x,y]$  connecting $x\in A_-$ with $y\in A_+$ there exists $z\in [x,y]$ with $z\in A$. Therefore if we show that 
\begin{equation}\label{eq:perbound}
\mathrm{Per}(\partial A_{\pm})\geq cr(t_i^j),
\end{equation} the claim is proved. 
The lower bound \eqref{eq:perbound} is a consequence of the isoperimetric inequality applied to $A_-$ or $A_+$. Indeed, the measure of each of $A_\pm$ is bigger or equal than $r(t_i^j)^2/4$ for $\beta$ sufficiently large depending only on $E\cap [0,L)^2$, being almost all points in  the sets $E\cap Q_{r(t_i^j)}$ and $E^c\cap Q_{r(t_i^j)}$ respectively of density $1$ and $0$ and of total measure bigger or equal than $(1/2-\eps)r(t_i^j)^2$.

Let us call $\mathcal P$ a $\Gamma$-limit, up to subsequences, of the l.h.s. of \eqref{eq:gamma}.

The next step to prove Theorem \ref{thm:gammayuk} is the following

\begin{lemma}\label{lemma:gammayuk1}
	\begin{equation}\label{eq:phigamma}
	\mathcal P(E)=\int_{\partial E\cap[0,L)^d}\phi(\nu_E(x))\,\d\Hcal^{d-1}(x),
	\end{equation}	
	where $\partial E$ is the reduced boundary of $E$, $\phi(\nu)=\underset{\eps\to0}{\lim}\frac{\mathcal P(E_\nu\cap[0,\eps)^d)}{\eps^{d-1}}$ and $E_\nu=\{x\cdot\nu\leq0\}$.
\end{lemma}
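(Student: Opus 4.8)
The plan is to prove Lemma~\ref{lemma:gammayuk1} by the classical localization / integral-representation scheme for $\Gamma$-limits of perimeter-type functionals (De~Giorgi--Letta for the measure property, the Fonseca--M\"uller blow-up, Buttazzo--Dal~Maso for the representation), the only genuinely new feature being that the approximating functionals are nonlocal; this will be absorbed by the fact that the kernel $C_{\beta,L}K_\beta$ concentrates at scale $1/\beta$, so that in the limit the energy becomes local.

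\emph{Localization and soft properties.} For an open set $A\subset[0,L)^d$, read on the torus $\R^d/L\Z^d$, I would set
\[
\mathcal P_\beta(E,A):=C_{\beta,L}\int_A\int_{\R^d}|\chi_E(x+\zeta)-\chi_E(x)|K_\beta(\zeta)\d\zeta\dx ,
\]
and let $\mathcal P(\cdot,A)$ denote the $L^1$-$\Gamma$-limit of $\mathcal P_\beta(\cdot,A)$ along the subsequence already fixed, so that $\mathcal P(E)=\mathcal P(E,[0,L)^d)$. Then $E\mapsto\mathcal P(E,A)$ is $L^1$-lower semicontinuous, being a $\Gamma$-limit; $A\mapsto\mathcal P(E,A)$ is monotone and, as each $\mathcal P_\beta(E,\cdot)$ is superadditive on disjoint open sets, so is $\mathcal P(E,\cdot)$; and $\mathcal P(E+\tau,A+\tau)=\mathcal P(E,A)$ by translation invariance of $K_\beta$ on the torus. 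Testing the constant sequence $E_\beta\equiv E$, using the elementary estimate $\int_A|\chi_E(x+\zeta)-\chi_E(x)|\dx\le\sum_{i=1}^d|\zeta_i|\,\per_{1i}(E,A^\zeta)$ (with $A^\zeta$ a small thickening of $A$) together with the two scaling facts
\[
C_{\beta,L}\int_{\R^d}|\zeta|_1K_\beta(\zeta)\d\zeta=O(1),\qquad C_{\beta,L}\int_{\{|\zeta|_1\ge\delta\}}K_\beta(\zeta)\d\zeta\longrightarrow 0\ \ (\beta\to+\infty,\ \delta>0),
\]
both immediate from \eqref{eq:cbeta3}, one gets the a priori bound $\mathcal P(E,A)\le\bar C\,\per_1(E,A)$ for every open $A$; together with the discussion preceding the lemma this shows $\mathcal P(E)$ is finite precisely when $E$ has finite perimeter. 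The second scaling fact also forces the limit functional to be local, $\mathcal P(E_1,A)=\mathcal P(E_2,A)$ whenever $E_1\cap A=E_2\cap A$, since the pairs $x,x+\zeta$ with $|\zeta|_1\ge\delta$ contribute $O\bigl(C_{\beta,L}\int_{\{|\zeta|_1\ge\delta\}}K_\beta\bigr)\to 0$.

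\emph{The measure property -- the main point.} To upgrade $\mathcal P(E,\cdot)$ to the trace on open sets of a finite Borel measure $\mu_E$, the De~Giorgi--Letta criterion leaves only the subadditivity (\emph{fundamental estimate}) $\mathcal P(E,A\cup B)\le\mathcal P(E,A')+\mathcal P(E,B)$ for all open $A\Subset A'$ and $B$. I would prove it by the De~Giorgi cut-and-paste: from near-optimal sequences $u_\beta\to E$ for $A'$ and $v_\beta\to E$ for $B$, one glues $w_\beta:=(u_\beta\cap A_t)\cup(v_\beta\setminus A_t)$ across the transition region $A'\setminus\bar A$, foliated by intermediate open sets $A_t$, $A\subset A_t\subset A'$, and optimizes over $t$. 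The perimeter copied from $u_\beta,v_\beta$ inside the transition region and the perimeter newly created on $\partial A_t$ are controlled as in the local case (after, if needed, replacing $u_\beta,v_\beta$ by their $\beta^{-1}\Z^d$-lattice approximations, which costs only $o(1)$ in $\mathcal P_\beta$ because the kernel cannot resolve scales below $1/\beta$). The genuinely new term is the nonlocal interaction created by the cut: it is supported in an $O(1/\beta)$-neighbourhood of $\partial A_t$ and is bounded there by $C_{\beta,L}\int_{\R^d}|\zeta|_1K_\beta\d\zeta=O(1)$ times the perimeter of $w_\beta$ in that thin layer, plus the negligible far tail $C_{\beta,L}\int_{\{|\zeta|_1\ge\delta\}}K_\beta$; a careful choice of the number and width of the foliating sets, together with the freedom to take fast-converging near-optimal sequences, makes this error vanish. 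Once subadditivity holds, De~Giorgi--Letta produces $\mu_E$, and the a priori bound gives $\mu_E\le\bar C\,\per_1(E,\cdot)$, so $\mu_E$ is absolutely continuous with respect to $\mathcal H^{d-1}$ and concentrated on $\partial E$. I expect this paragraph to be the only nontrivial part of the proof.

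\emph{Identification of the density.} By Radon--Nikodym, $\mu_E$ has a density $g_E\in L^1(\partial E,\mathcal H^{d-1})$ with respect to $\mathcal H^{d-1}$ on $\partial E$, and by the Besicovitch differentiation theorem this density is the blow-up limit of $\mu_E$ at $\mathcal H^{d-1}$-a.e.\ $x_0\in\partial E$. Blowing up $E$ at such an $x_0$, the rescaled sets $(E-x_0)/r$ converge in $L^1_{\mathrm{loc}}$ to the half-space $E_\nu$ with $\nu=\nu_E(x_0)$; combining $L^1$-lower semicontinuity (for the lower bound on $g_E(x_0)$) with a recovery sequence obtained by diagonalizing recovery sequences for $E_\nu$ (for the upper bound), and using the a priori bound $\mathcal P(\cdot)\le\bar C\,\per_1(\cdot)$ to control the error terms, one obtains $g_E(x_0)=\phi(\nu_E(x_0))$ with $\phi$ exactly as in the statement -- the limit defining $\phi(\nu)$ existing by the super- and subadditivity and scaling of $\mathcal P(E_\nu,\cdot)$, and being independent of the base point of the cube by translation invariance. (Equivalently, one may quote directly the Buttazzo--Dal~Maso integral representation theorem, whose hypotheses are exactly the properties assembled above.) Then
\[
\mathcal P(E)=\mu_E([0,L)^d)=\int_{\partial E\cap[0,L)^d}\phi(\nu_E(x))\d\mathcal H^{d-1}(x),
\]
which is \eqref{eq:phigamma}.
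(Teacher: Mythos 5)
Your proposal takes a genuinely different route from the paper's. The paper simply invokes the global method for relaxation of Bouchitt\'e--Fonseca--Mascarenhas \cite{Fon}, reducing the integral representation \eqref{eq:phigamma} to the verification of the two-sided growth estimate \eqref{eq:stima} for $C^2$-sets $E$; that estimate is then proved by covering $\partial E$ with finitely many cylinders on which the boundary is a Lipschitz graph and reducing, via the bi-Lipschitz change of variables $\Phi$, to the half-space computation already carried out for the normalization constant $C_{\beta,L}$ in \Cref{ss:normconst}. You instead reconstruct the representation machinery from scratch: soft properties (locality, superadditivity, translation invariance, $L^1$-lower semicontinuity, the upper growth bound), the De~Giorgi--Letta criterion together with a fundamental estimate to upgrade $A\mapsto\mathcal P(E,A)$ to a measure, and then a Fonseca--M\"uller blow-up with Besicovitch differentiation to identify the density as $\phi(\nu_E)$. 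Both routes lead to the same place, and you yourself note that the properties you assemble are exactly the hypotheses of an abstract representation theorem (Buttazzo--Dal~Maso, or \cite{Fon} itself). What your version buys is that it makes explicit the measure property, which \cite{Fon} requires as a hypothesis and which the paper's terse citation leaves implicit; what it costs is that the fundamental estimate, which you acknowledge is the nontrivial step, remains a sketch, and it is precisely that cut-and-paste argument that is delicate here because the kernel $K_\beta$ is not compactly supported (so the tail $C_{\beta,L}\int_{\{|\zeta|_1\ge\delta\}}K_\beta$ and the interaction across the cut both have to be controlled uniformly in the glueing parameter). One further point worth flagging: the paper proves \emph{both} the upper and the lower bound in \eqref{eq:stima}, whereas your sketch only establishes $\mathcal P(E,A)\le\bar C\,\per_1(E,A)$. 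For the bare statement of Lemma~\ref{lemma:gammayuk1} this may be enough, but the lower growth bound is what guarantees $\phi>0$ and is used downstream in \Cref{ss:gammaconv} to identify $\phi(\nu)=|\nu|_1$, so it should not be dropped from the verification.
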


Thanks to the results obtained in \cite{Fon}, if one shows that, for all sets $E$ with $\partial E\in C^2$
\begin{equation}\label{eq:stima}
\frac{1}{C}(1+\mathcal H^{d-1}(E))\leq \mathcal P(E)\leq C(1+\mathcal H^{d-1}(E))
\end{equation}
then the representation Lemma \ref{lemma:gammayuk1} holds.

Let us then prove \eqref{eq:stima}. 
First of all, consider the upper bound.  Being the boundary of $E\cap[0,L)^d$ compact and of class $C^2$, there exists a covering of it with finitely many  cubes $\{C(x_j, \nu_j,h_j)\}_{j=1}^N$, where $x_j\in\R^d$, $\nu_j\in\SS^{d-1}$, $h_j>0$, of the form
\begin{equation}
C(x_j,\nu_j, h_j)=\{y\in\R^d:\,y_j:=(y-x_j)\cdot\nu_j\leq h_j,\quad |y-y_j\nu_j-x_j|_\infty\leq h_j \}
\end{equation}
and such that in each of the cubes $4\bigl(C(x_j,\nu_j, h_j)-x_j\bigr)$ the boundary of $E-x_j$ is the graph of a Lipschitz function of the plane $\{y\cdot\nu_j=0\}$, with Lipschitz constant bounded by some constant independent of $j$. 

Let $C_j:=C(x_j,\nu_j, h_j)$, $E_j:=E\cap C_j$.

Then,
\begin{align}
{C_{\beta,L}}\int_{[0,L)^d }\int_{ \R^d}&|\chi_{E}(x+\zeta)-\chi_{E}(x)|\frac{e^{-\beta|\zeta|_1}}{|\zeta|_1^{d-2}}\d\zeta\dx\leq\notag\\
&\leq C+\sum_{j=1}^NC_{\beta,L}\int_{C_j}\int_{C_{j-1}\cup C_j\cup C_{j+1}-x}|\chi_{E}(x+\zeta)-\chi_{E}(x)|\frac{e^{-\beta|\zeta|_1}}{|\zeta|_1^{d-2}}\d\zeta\dx
\end{align}
where the sets $C_0$ and $C_{N+1}$ are defined by periodicity and $C$ is independent of $\beta$. This is due to the decay of the kernel, which gives finite weight to interactions at distances bigger than $\min_jh_j$.

Now, let us consider one of the contributions above where we assume for simplicity that $\nu_j=e_1$, $x_j=0$, $\eps=\max\{h_{j-1},h_j,h_{j+1}\}$ and we denote by $\Phi$ the Lipschitz map that maps $H^-_\eps=[-\eps/2,0]\times[-\eps/2,\eps/2]^{d-1}$ in $E_j$ and $H^+_\eps=[0,\eps/2]\times[-3/2\eps,3/2\eps]^{d-1}$ in $ (2C_j)\setminus(E_{j}\cup E_{j-1}\cup E_{j+1})$:

\begin{align}
{C_{\beta,L}}\int_{C_j}&\int_{(C_{j-1}\cup C_j\cup C_{j+1})-x}|\chi_{E}(x+\zeta)-\chi_{E}(x)|\frac{e^{-\beta|\zeta|_1}}{|\zeta|_1^{d-2}}\d\zeta\dx\sim\notag\\
&\sim{C_{\beta,L}}\int_{H^-_\eps}\int_{H^+_\eps}|\chi_E(\Phi(x))-\chi_E(\Phi(y))|\frac{e^{-\beta|\Phi(x)-\Phi(y)|_1}}{|\Phi(x)-\Phi(y)|_1^{d-2}}|D\Phi|(x)|D\Phi|(y)\dx\dy.
\end{align}
Carrying on analogous calculations to those of  Section \ref{ss:normconst} one obtains that the above limit as $\beta\to+\infty$ is less or equal than a constant times $\eps^{d-1}$, namely comparable to the measure of $\partial (E\cap C_j)$. Therefore, the estimate from above is proved.

Now, let us prove the estimate from below  in \eqref{eq:stima}.
To this aim, take a Besicovitch covering of $\partial E$ with cylinders $\{C^\alpha_j\}_{\underset{\alpha=1,\dots,N_0}{j=1,\dots,N_\alpha}}$ such that for all $\alpha\in\{1,\dots,N_0\}$ the sets $\{C^\alpha_j\}_{j=1,\dots,N_\alpha}$ are disjoint and such that in each of the cubes  $4\bigl(C_j^\alpha-x_j^\alpha\bigr)$ the boundary of $E-x_j^\alpha$ is the graph of a Lipschitz function of the plane $\{y\cdot\nu_j^\alpha=0\}$ with Lipschitz constant uniformly bounded in $j$.

Then
\begin{align}
{C_{\beta,L}}\int_{[0,L)^d }\int_{ \R^d}&|\chi_{E}(x+\zeta)-\chi_{E}(x)|\frac{e^{-\beta|\zeta|_1}}{|\zeta|_1^{d-2}}\d\zeta\dx\geq\notag\\
&\geq -C+\sum_{j=1}^{N_\alpha}C_{\beta,L}\int_{C_j^\alpha}\int_{C^\alpha_{j-1}\cup C_j^\alpha\cup C^\alpha_{j+1}-x}|\chi_{E}(x+\zeta)-\chi_{E}(x)|\frac{e^{-\beta|\zeta|_1}}{|\zeta|_1^{d-2}}\d\zeta\dx
\end{align}
where the sets $C_0^\alpha$ and $C_{N+1}^\alpha$ are defined by periodicity and $C$ is independent of $\beta$. This is due again to the decay of the kernel and the fact that the sets $\{C_j^\alpha\}_{j=1,\dots,N_\alpha}$ are disjoint. After making a change of variables with the map $\Phi^\alpha$ that maps part of the sets $E\cap C^\alpha_j$, $E\cap C^\alpha_{j-1}$, $E\cap C^\alpha_{j+1}$ into adjacent half squares of side length $\min\{h_j,h_{j-1},h_{j+1}\}$
The single contributions of the sets $C_j^\alpha$ can be estimated in the same way as in the estimates from below in Section \ref{ss:normconst}, leading to something of the order of $\partial (E\cap C_j^\alpha)$. Applying the same reasoning to the other families $\{C_j^\beta\}$ with $\beta\neq\alpha$, $\beta\in\{1,\dots,N_0\}$ one obtains the estimate from below as well.

The second step to prove Theorem \ref{thm:gammayuk} after Lemma \ref{lemma:gammayuk1} is to characterize the function $\phi$ in \eqref{eq:phigamma}.

W.l.o.g. we consider $d=2$. Then the kernel is given by $K_{\beta}(\zeta):=-{e^{-\beta|\zeta|_1}}\ln(|\zeta|_1)$. 

Let us recall 
\begin{equation*}
   \begin{split}
|\chi_E(x)-\chi_E(x+\zeta)|= &|\chi_E(x)-\chi_E(x+\zeta_1)| +|\chi_E(x+\zeta_1)-\chi_E(x+\zeta)| \\ & -2|\chi_E(x)-\chi_E(x+\zeta_1)||\chi_E(x+\zeta_1)-\chi_E(x+\zeta)|.
   \end{split}
\end{equation*}

Integrating and using the $[0,L)^2$-periodicity of $E$
\begin{align}
\int_{[0,L)^2 }\int_{ \R^{2}}|\chi_E(x)&-\chi_E(x+\zeta)|K_{\beta}(\zeta)\d\zeta\dx=\int_{[0,L)^2 }\int_{ \R^{2}}|\chi_E(x)-\chi_E(x+\zeta_1)|K_{\beta}(\zeta)\d\zeta\dx\notag\\&+\int_{[0,L)^2 }\int_{ \R^{2}}|\chi_E(x)-\chi_E(x+\zeta_2)|K_{\beta}(\zeta)\d\zeta\dx\notag\\
&-2\int_{[0,L)^2 }\int_{ \R^{2}}|\chi_E(x)-\chi_E(x+\zeta_1)||\chi_E(x)-\chi_E(x+\zeta_2)|K_{\beta}(\zeta)\d\zeta\dx\label{eq:splitting}
\end{align} 

Let now $E$ be given, up to translations, of the form $\{x\cdot\nu\leq 0\}\cap[-2\eps,2\eps]^{2}$. 

Then,

\begin{align}
C_{\beta,L}\int_{-\eps}^{\eps}\int_{-\eps}^\eps\int_{\R}\int_{ \R}|\chi_E(x)-&\chi_E(x+\zeta_1)|K_{\beta}(\zeta_1,\zeta_2)\d\zeta_2\d\zeta_1\dx_1\dx_2=\notag\\
&C_{\beta,L}\int_{-\eps}^{\eps}\int_{-\eps}^\eps\int_{\R}|\chi_{E_{x_2}}(x_1)-\chi_{E_{x_2}}(x_1+\zeta_1)|\widehat K_{\beta}(\zeta_1)\d\zeta_1\dx_1\dx_2
\end{align}

which converges, as  $\beta\to+\infty$, to
\begin{equation}\label{eq:6.14}
\int_{-\eps}^{\eps}\int_{ \partial E_{x_2}}\d\Hcal^0(x_1)\dx_2=\per_{11}(E,[\eps,\eps)^2).
\end{equation}

Analogously, the second term in \eqref{eq:splitting} converges to

\begin{equation}\label{eq:6.15}
\int_{-\eps}^{\eps}\int_{ \partial E_{x_1}}\d\Hcal^0(x_2)\dx_1=\per_{12}(E,[\eps,\eps)^2).
\end{equation}

We claim that the third term in \eqref{eq:splitting} is of lower order and therefore converges to $0$ as $\beta\to+\infty$.

We have that
\begin{align}
C_{\beta,L}&\int_{[-\eps,\eps)^2 }\int_{ \R^{2}}|\chi_E(x)-\chi_E(x+\zeta_1)||\chi_E(x)-\chi_E(x+\zeta_2)|K_{\beta}(\zeta)\d\zeta\dx\sim\notag\\
&\sim C_{\beta,L}\int_0^\eps\int_{-\eps}^{x_2\tan\theta}\int_{-x_1+x_2\tan\theta}^{+\infty}\int_{-x_2+\frac{x_1}{\tan\theta}}e^{-\beta|\zeta_{1}|}e^{-\beta|\zeta_2|}(-\ln(|\zeta_1|+|\zeta_2|))\d\zeta_2\d\zeta_1\dx_1\dx_2,
\label{eq:6.16}
\end{align}
where $\zeta_1+\zeta_2\geq-x_2+\frac{x_1}{\tan\theta}-x_1+x_2\tan\theta$, $\theta$ is the minus the angle between $e_1$ and $\nu$ and, w.l.o.g. is assumed to be between $0$ and $-\pi/4$. 

Since now in comparison to Section \ref{ss:normconst} the variable $x_2$ appears as well in the independent variable of integration, simple estimates show that such term goes like $\frac{C_{\beta,L}}{\beta^4}$ and therefore vanishes as $\beta\to+\infty$.

\end{document}